\documentclass[hidelinks,onefignum,onetabnum]{siamart251216}

\usepackage[T1]{fontenc}
\usepackage{lmodern}
\usepackage[protrusion=true,expansion=false]{microtype}
\usepackage{amsmath,amssymb,mathtools,mathrsfs}
\usepackage{enumitem,needspace}

\allowdisplaybreaks
\DeclareMathOperator{\Div}{div}

\DeclareMathOperator{\Ran}{Ran}
\DeclareMathOperator{\Ker}{Ker}
\DeclareMathOperator{\Span}{span}
\newcommand{\spec}{\sigma}

\newcommand{\R}{\mathbb{R}}
\newcommand{\ud}{\,\mathrm{d}}
\newcommand{\cE}{\mathcal{E}}
\newcommand{\cH}{\mathcal{H}}
\newcommand{\cQ}{\mathcal{Q}}
\newcommand{\Id}{\mathrm{Id}}
\newcommand{\eps}{\varepsilon}

\newsiamremark{hypothesis}{Hypothesis package}
\newsiamremark{remark}{Remark}
\crefname{hypothesis}{hypothesis package}{hypothesis packages}

\headers{Stability of non-isentropic gaseous stars}{Z. Lin, Y. Wang, and H. Zhu}
\title{Stability of non-isentropic gaseous stars}
\author{Zhiwu Lin\thanks{School of Mathematical Sciences, Fudan University,
Shanghai 200433, China (\email{zwlin@fudan.edu.cn}).}
\and Yucong Wang\thanks{School of Mathematics and Computational Science,
Xiangtan University, Xiangtan, Hunan 411105, China
(\email{yucongwang666@163.com}).}
\and Hao Zhu\thanks{School of Mathematics, Nanjing University, Nanjing,
Jiangsu 210093, China (\email{haozhu@nju.edu.cn}).}}

\ifpdf
\hypersetup{
 pdftitle={Stability of non-isentropic gaseous stars},
 pdfauthor={Zhiwu Lin, Yucong Wang, and Hao Zhu},
 pdfkeywords={Euler--Poisson system, gaseous stars, non-isentropic flow,
 turning-point principle, Schwarzschild criterion, Hamiltonian PDE}}
\fi

\begin{document}

\begingroup
\hbadness=10000
\maketitle
\endgroup

\begin{abstract}
We study the linear stability of compactly supported static spherical equilibria of
the non-isentropic Euler--Poisson system.  In the Schwarzschild-stable case,
the linearized equations are realized as a separable Hamiltonian system on
weighted spaces adapted to the physical vacuum.  We prove stability against
non-radial perturbations and show that the algebraic dimension of the radial unstable subspace is
the Morse index of a density quadratic form subject only to the mass
 constraint.  Thus the infinitely many linearized entropy constraints reduce,
 for radial perturbations, to one mass constraint and an explicit entropy
 reconstruction.  We apply this criterion to two entropy prescriptions.  No
 smallness condition is imposed on a fixed entropy--density relation: under
 the stated effective-pressure and branch hypotheses, a simple mass maximum
 is not a stability transition.  By contrast, for a small entropy distribution
 fixed as a function of enclosed mass, the turning-point principle holds.  In the
Schwarzschild-unstable case, we construct the self-adjoint velocity operator
from its quadratic form, prove a strictly negative spectral bottom and sharp
exponential growth, and reconstruct energy solutions of the original
first-order linearized system.
\end{abstract}

\begin{keywords}
Euler--Poisson system, gaseous star, non-isentropic flow, turning-point
principle, Schwarzschild criterion, Hamiltonian PDE
\end{keywords}

\begin{MSCcodes}
35Q35, 35B35, 35P05, 76E20, 85A15
\end{MSCcodes}

\section{Introduction}\label{sec:introduction}

The motion of a self-gravitating, non-isentropic gas is described by the
Euler--Poisson system
\begin{equation}\label{eq:EP}
\left\{
\begin{aligned}
 &\partial_t\rho+\Div(\rho u)=0,\\
 &\rho\bigl(\partial_tu+u\cdot\nabla u\bigr)+\nabla P(S,\rho)
      +\rho\nabla V=0,\\
 &\partial_tS+u\cdot\nabla S=0,\\
 &
V(x)=-\int_{\R^3}\frac{\rho(y)}{|x-y|}\ud y.
\end{aligned}
\right.
\end{equation}
Here $\rho\geq0$ is the density, $u$ the velocity, $S$ the specific entropy,
$P=P(S,\rho)$ the pressure, and $V$ the gravitational
potential.  Units are chosen so that the gravitational constant is one.
Throughout, the Newton operator is normalized by
\[
 ((-\Delta)^{-1}h)(x)=\frac1{4\pi}\int_{\R^3}\frac{h(y)}{|x-y|}\ud y,
 \qquad V_h=-4\pi(-\Delta)^{-1}h.
\]
We consider compactly supported static
spherical solutions
\[
 (\rho_0,S_0,u_0)=(\rho_0(r),S_0(r),0),\qquad
 \operatorname{supp}\rho_0=\overline{B_R},\qquad r=|x|.
\]
Here $B_R=\{x\in\R^3:|x|<R\}$ denotes the ball of radius $R$.
When $\rho_0$ is strictly decreasing, the equilibrium entropy can be written
as $S_0=f(\rho_0)$.  The coefficient
\begin{equation}\label{eq:G-intro}
 \mathfrak G(r)
 =-\frac{P_S(S_0(r),\rho_0(r))f'(\rho_0(r))}{\rho_0(r)}
\end{equation}
has the sign of the classical Schwarzschild discriminant.  Indeed,
\begin{equation}\label{eq:classical-discriminant-relation}
 \frac{\ud}{\ud r}P(S_0,\rho_0)-P_\rho(S_0,\rho_0)\rho_0'
 =-\rho_0\rho_0'\mathfrak G.
\end{equation}
Thus $\mathfrak G>0$ is the restoring sign, while $\mathfrak G<0$ on a
layer is the convectively unstable sign.  It is also the sign of the squared
Brunt--V\"ais\"al\"a frequency, up to the usual positive normalization; see
\cite{LedouxWalraven1958,Cox1980,Unno1989,Aerts2010}.  A modern operator
treatment of adiabatic stellar oscillations was given by Makino
\cite{Makino2023}.

For barotropic isentropic  nonrotating stars, Lin and Zeng \cite{LinZeng2022} proved that
the radial instability index is determined by the mass--radius curve.  In
particular, stability can change only at an extremum of the mass.  Their proof
combines a separable Hamiltonian formulation with the classical variational
principles for stellar oscillations
\cite{Eddington1918,Chandrasekhar1964,ChandrasekharLebovitz1964}.
For isentropic rotating stars, the answer depends on the comparison family: fixed
angular velocity and fixed Lagrangian angular-momentum distribution lead to
different turning-point laws \cite{LinWang2023}.

The non-isentropic problem has an additional difficulty.  The entropy
equation gives the conserved functionals
\begin{equation}
 \mathcal C_g(\rho,S)=\int_{\R^3}\rho g(S)\ud x,
 \qquad g\in C^1(\R),
\end{equation}
and hence infinitely many linear constraints on dynamically accessible
density--entropy perturbations.  We use the Casimirs whose linearizations
are continuous on the phase space, as specified in
\cref{prop:Casimir-accessibility}.  In the radial class these constraints are
equivalent to conservation of mass together with an explicit reconstruction
of the entropy perturbation.  The stability problem consequently reduces to
a density quadratic form on a codimension-one space.

Existence and stability of non-isentropic equilibria have been considered in
several settings; see
\cite{DengLiuYangYao2002,DengGuo2003,DengYang2006,DengBaXie2010,
Wu2015,Yuan2022,HongLuoZhu2018,DuanLuoWang2025}.  The classical works of
Lebovitz \cite{Lebovitz1965,LebovitzOnset1965,Lebovitz1966} established the
role of Schwarzschild's criterion for spherical gaseous masses; see also
\cite{KanielKovetz1967,Rosencrans1969} for related treatments of the criterion.  The static
criterion of Bisnovatyi-Kogan and Blinnikov
\cite{BisnovatyiKoganBlinnikov1974} also stresses that a comparison sequence
must preserve the Lagrangian distributions of advected quantities.  The
results below give a Hamiltonian and constrained-inertia explanation of this
distinction.

We first consider the Schwarzschild-stable case $\mathfrak G>0$.  Introduce
\[
 \eta=\delta\rho,\qquad
 \zeta=\delta\rho-\frac{\delta S}{f'(\rho_0)}.
\]
The linearized equations take the separable Hamiltonian form
\begin{align}\label{separable-Hamiltonian-entropy}
 \partial_t\binom{(\eta,\zeta)}{u}
 =\begin{pmatrix}0&B\\-B'&0\end{pmatrix}
  \begin{pmatrix}\mathbb L&0\\0&A\end{pmatrix}
  \binom{(\eta,\zeta)}{u},
\end{align}
where operators $A$, $\mathbb L$, $B$ are defined by \eqref{eq:A-and-mathbbL} and \eqref{eq:B-maximal}, and $B'$
denotes the dual operator of $B$.

Let $n^u$ be the algebraic dimension of the unstable spectral
subspace.
The
separable Hamiltonian theorem of \cite{LinZeng2022} then gives
\[
 n^u=n^-\!\left(\mathbb L\big|_{\overline{\Ran B}}\right).
\]
Here $n^-(T)$ denotes the Morse index of $T$: the maximal
dimension of a subspace on which the quadratic form of $T$ is negative
definite.

For a radial density perturbation set
\[
 I_\eta(r)=4\pi\int_0^r s^2\eta(s)\ud s,
 \qquad
 \mathsf H_0=\left\{\eta:\int_{B_R}\eta\ud x=0\right\}.
\]
The entropy constraints are equivalent to
\[
 \delta S(r)
 =\frac{f'(\rho_0)\rho_0'}{4\pi r^2\rho_0}I_\eta(r),
 \qquad \eta\in \mathsf H_0.
\]
This reconstruction is proved as
\eqref{eq:radial-entropy-reconstruction}.
The reduced form is
\begin{equation}\label{eq:reduced-form-intro}
\begin{split}
 \cQ_0[\eta]
={}&\left\langle
 \bigl(\Psi''(\rho_0)-4\pi(-\Delta)^{-1}\bigr)\eta,\eta
 \right\rangle\\
&+\int_{B_R}\mathfrak G(r)
 \left|\eta-\frac{\rho_0'}{4\pi r^2\rho_0}I_\eta(r)\right|^2\ud x,
\end{split}
\end{equation}
where $\Psi''$ is the enthalpy weight for the composite pressure
$P(f(\rho),\rho)$, $(-\Delta)^{-1}$ denotes the Newton
potential, and $\langle\cdot,\cdot\rangle$ is the unweighted integral
duality pairing.

\begin{theorem}[Linear stability criterion]
\label{thm:intro-main-index}
Assume the pressure and Schwarzschild stable equilibrium hypotheses in
\cref{hyp:pressure,hyp:stable-equilibrium}. Then the linearized system \eqref{separable-Hamiltonian-entropy} generates a
strongly continuous group with a finite-dimensional unstable subspace. Moreover, the non-isentropic star  $(\rho_{0},S_0,0)$
is spectrally stable against non-radial perturbations.  In the radial subspace, the number of unstable eigenvalues, counting its algebraic multiplicity,  is
\[
 n^u_{\mathrm{rad}}=n^-(\cQ_0|_{\mathsf H_0}).
\]

\end{theorem}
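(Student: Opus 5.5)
The plan is to verify that \eqref{separable-Hamiltonian-entropy} satisfies the hypotheses of the separable Hamiltonian theorem of \cite{LinZeng2022}, obtain $n^u=n^-(\mathbb L|_{\overline{\Ran B}})$, and then compute this index mode by mode in spherical harmonics.

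\emph{Functional-analytic setup.} Let $X$ be the weighted density--entropy space, with weights $\Psi''(\rho_0)$ and $\mathfrak G$ adapted to the physical vacuum, and let $Y=L^2(\rho_0\ud x)$ be the velocity space. The theorem of \cite{LinZeng2022} requires three things:
\begin{enumerate}[label=(\roman*)]
\item $A\colon Y\to Y^*$ is bounded, symmetric and positive; here it is multiplication by $\rho_0$, so this is immediate.
\item $B\colon Y^*\supset D(B)\to X$ is densely defined and closed; taking the maximal realization \eqref{eq:B-maximal} gives this.
\item $\mathbb L\colon X\to X^*$ is bounded and symmetric, with $n^-(\mathbb L)<\infty$ and a positive definite splitting on a complement of finite dimension.
\end{enumerate}
For (iii), write $\mathbb L$ as the sum of two parts. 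The first is a diagonal positive part: $\Psi''(\rho_0)$ acting on $\eta$, plus the $\mathfrak G$-weighted term acting on $\zeta$, which is coercive because $\mathfrak G>0$. The second is $-4\pi(-\Delta)^{-1}$. Under \cref{hyp:pressure}, the Newton operator is compact relative to the weighted $\Psi''$ norm, since $\Psi''(\rho_0)\gtrsim \rho_0^{\gamma-2}$ near $r=R$ and $\Psi''$ is bounded below. Hence $\mathbb L$ is a positive operator plus a compact perturbation, so $n^-(\mathbb L)<\infty$ and the required decomposition exists. The theorem then yields the strongly continuous group, the finiteness of the unstable subspace, and the index formula.

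\emph{Characterizing $\overline{\Ran B}$.} This is the step I expect to be the main obstacle. By \cref{prop:Casimir-accessibility}, $\overline{\Ran B}$ is the annihilator of the linearized continuous Casimirs $\delta\mathcal C_g$, intersected with the closure of divergence-type perturbations. Concretely, $B u=(-\Div(\rho_0 u),\,\cdots)$, with the $\zeta$ component $\zeta=\eta-\delta S/f'(\rho_0)$ given by $\delta S=-u\cdot\nabla S_0$.

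I would first show that $\Ran B$ and $\mathbb L$ both decompose over spherical harmonics $Y_{lm}$. For $l\ge1$, the range imposes no entropy constraint beyond the relation with $u_r$. For $l=0$, write $u=u_r(r)\hat x$. Then $\eta=-r^{-2}(r^2\rho_0u_r)'$ gives $I_\eta=-4\pi r^2\rho_0u_r$, so $u_r$ is determined by $\eta$ and $\delta S=-u_rS_0'=f'(\rho_0)\rho_0' I_\eta/(4\pi r^2\rho_0)$. This is the reconstruction \eqref{eq:radial-entropy-reconstruction}. Mass conservation $I_\eta(R)=0$ yields $\mathsf H_0$.

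The delicate points are density and closure near the vacuum boundary. One must show that every $\eta\in\mathsf H_0$ in the weighted space is a limit of $\Div(\rho_0u)$ with $u\in D(B)$, and that no further constraints survive. For this I would use the Hardy-type inequality $\int |I_\eta|^2/(r^4\rho_0)\lesssim\|\eta\|^2_X$, which should follow from \cref{hyp:stable-equilibrium}, and then mollify.

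\emph{Computing the index.} Substituting the radial description into $\langle\mathbb L(\eta,\zeta),(\eta,\zeta)\rangle$ gives exactly $\cQ_0[\eta]$. Indeed $\zeta=\eta-\rho_0'I_\eta/(4\pi r^2\rho_0)$, and the $\zeta$-weight is $\mathfrak G$. This proves $n^u_{\mathrm{rad}}=n^-(\cQ_0|_{\mathsf H_0})$.

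For $l\ge1$, I would show that $\mathbb L|_{\overline{\Ran B}}\ge0$ and that its kernel is neutral, so that no unstable modes occur. The plan is to write the quadratic form in Lagrangian displacement $\xi$, with $\eta=-\Div(\rho_0\xi)$. The Eulerian $l\ge1$ energy then becomes the Chandrasekhar--Lebovitz variational energy, with the Brunt--V\"ais\"al\"a term $\mathfrak G$ nonnegative. Positivity should then follow as in \cite{LinZeng2022}: use the translation mode $\xi=\nabla\rho_0$-type as a ground state and apply a Sturm-type or Hardy factorization argument. This is the second delicate step, since $\mathfrak G$ enters with the correct sign but couples $\xi_r$ with $\rho_0'$ in the same way as the radial completed square.
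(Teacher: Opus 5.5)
Your architecture matches the paper's for the group, the index formula, and the radial count. You verify the Lin--Zeng hypotheses, obtain $n^u=n^-(\mathbb L|_{\overline{\Ran B}})$, identify the radial part of $\overline{\Ran B}$ with the graph $\{(\eta,\eta-T\eta):\eta\in\mathsf H_0\}$ through a weighted Hardy estimate, and substitute to get $\cQ_0$.

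\textbf{The gap is the non-radial step.} There you only say that positivity ``should follow'' from a Sturm-type or Hardy-factorization argument on the Chandrasekhar--Lebovitz displacement energy. You also treat the coupling of the $\mathfrak G$ term to $\xi_r$ and $\rho_0'$ as the obstacle. As written this is not an argument. A ground-state comparison needs a scalar operator with a known positive null solution, but the displacement energy couples $\xi_r$ with the horizontal component. Expanded with the adiabatic coefficient $\mathfrak a=P_\rho/\rho_0$, it reads $\mathfrak a\eta^2+2\mathfrak G\,\eta\,\nabla\rho_0\cdot\xi+\mathfrak G|\nabla\rho_0\cdot\xi|^2$ plus gravity, and the cross term has no sign. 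You do not say how this would reduce to a scalar problem.

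\textbf{The missing idea is the decoupling you already noted in (iii).} $\mathbb L$ is block diagonal in $(\eta,\zeta)$, so for every $z=(\eta,\zeta)\in X^{(\ell)}$,
\[
 \langle\mathbb Lz,z\rangle=\langle L_{11}\eta,\eta\rangle+\int_{B_R}\mathfrak G|\zeta|^2\ud x .
\]
The completed square you used radially does not depend on the sector; in displacement variables $\zeta=-\rho_0\Div\xi$. So the entropy term is simply dropped as nonnegative. Non-radial stability then reduces to the barotropic statement $L_{11}^{(\ell)}\geq0$ for $\ell\geq1$, with the effective pressure $\widetilde P$. This holds on all of $X_1^{(\ell)}$, hence a fortiori on the non-radial part of $\overline{\Ran B}$, so you need no description of the range in the non-radial sectors.

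The ground-state argument belongs at this barotropic level, after the density--potential reduction $n^-(L_{11}^{(\ell)})=n^-(D_0^{(\ell)})$ of \cite{LinZeng2022}, where $D_0=-\Delta-4\pi w_1^{-1}\mathbf 1_{B_R}$. Differentiating $\Psi'(\rho_0)+V_0=\mathrm{const}$ gives $D_0\,\partial_iV_0=0$, so $V_0'>0$ is a positive null solution of $D_0^{(1)}$. Hence $D_0^{(1)}\geq0$ with kernel $\Span\{V_0'\}$, which gives the translations. For $\ell\geq2$, $D_0^{(\ell)}=D_0^{(1)}+(\ell(\ell+1)-2)r^{-2}>0$.

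Smaller points:
\begin{itemize}
\item \cref{prop:Casimir-accessibility} is deduced from the radial range computation, so it cannot be an input to that computation.
\item Your direct $\ell=0$ calculation also needs $\|T\eta\|_{X_2}\lesssim\|\eta\|_{X_1}$ on $\mathsf H_0$.
\item It also needs the converse inclusion: vanishing boundary flux for $q\in Y^*$ gives $\int\eta=0$, and $q\in Y^*$ rules out a central term $Cr^{-2}$.
\item The Lin--Zeng theorem further requires a kernel-complement condition. This follows from density of $D(B')$ and $\dim\Ker\mathbb L<\infty$.
\end{itemize}
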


We next compare two families of equilibria parameterized by the central density $\mu$.
For a fixed relation $S_\mu=f(\rho_\mu)$, the family
tangent is normal to the mass constraint for the barotropic part of
\eqref{eq:reduced-form-intro}, but not for the full form: the positive
Schwarzschild term lifts the barotropic neutral direction at a mass maximum.
For fixed Lagrangian entropy,
\[
 S_{\mu,\kappa}(r)=\kappa\overline S(m_{\mu,\kappa}(r)),
 \qquad
 m_{\mu,\kappa}(r)=4\pi\int_0^r s^2\rho_{\mu,\kappa}(s)\ud s,
\]
the tangent is normal for the full reduced Hessian.  This gives  opposite
turning-point conclusions informally.

\begin{theorem}[Two entropy prescriptions]
\label{thm:intro-two-families}
\begin{enumerate}[label=(\roman*)]
\item Let $S=f(\rho)$, with no smallness assumption on $f$, and suppose the
effective pressure satisfies the hypotheses of
\cref{lem:fixed-relation-existence}.  The fixed-relation equilibria exist for
all sufficiently small central densities.  On any compact continuation
satisfying $\mathrm{(E_s)}$ and $\mathrm{(F)}$, every simple mass extremum
obeying the nondegeneracy and index assumptions of
\cref{thm:fixed-relation-no-transition} is not a radial stability transition.
\item Under the factorized-pressure and comparison-branch hypotheses of
\cref{thm:lagrangian-small-entropy-tpp}, if
$S=\kappa\overline S(m)$ with $\overline S'>0$ and $\kappa>0$ is sufficiently
small, the unstable radial
dimension is zero when $M'\geq0$ and one when $M'<0$.  The constrained
kernel at the simple mass maximum is one-dimensional, so the stability
transition occurs precisely there.
\end{enumerate}
The regularity of $f$ and $\overline S$ is specified in
\cref{lem:fixed-relation-existence,lem:lagrangian-family-existence}.
\end{theorem}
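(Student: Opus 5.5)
The plan is to treat the two prescriptions separately. In each case the radial index is computed from \cref{thm:intro-main-index}, $n^u_{\mathrm{rad}}=n^-(\cQ_0|_{\mathsf H_0})$, and we track how this constrained Morse index changes along the family $\mu\mapsto(\rho_\mu,S_\mu)$. The main tool is the standard constrained-index formula. Suppose $\cQ_0$ has finite Morse index and trivial kernel on the full space, and let the constraint be $\langle\eta,1\rangle=0$. Then
\[
 n^-(\cQ_0|_{\mathsf H_0})=n^-(\cQ_0)-\mathbf 1\bigl[\langle \cQ_0^{-1}1,1\rangle<0\bigr].
\]
More generally, if $\cQ_0 w=c\cdot 1$ for some $w$, the correction is decided by the sign of $\langle w,1\rangle/c$. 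So the key step is to find a vector $w$ with $\cQ_0 w\propto 1$, built from the family tangent $\partial_\mu\rho_\mu$. Then $\langle w,1\rangle$ is $M'(\mu)$ up to a factor, and any crossing of $M'=0$ is read off from how the sign of $\langle\cQ_0^{-1}1,1\rangle$ moves together with $n^-(\cQ_0)$.

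\textbf{Part (ii), Lagrangian entropy.} Differentiate the equilibrium equation $\nabla P(S_\mu,\rho_\mu)+\rho_\mu\nabla V_\mu=0$ in $\mu$, with $S_{\mu,\kappa}=\kappa\overline S(m_\mu(r))$, and set $\eta=\partial_\mu\rho_\mu$. Because the entropy is advected with enclosed mass, the induced $\delta S=\kappa\overline S'(m)\,\partial_\mu m_\mu$ should be exactly the radial reconstruction $\frac{f'(\rho_0)\rho_0'}{4\pi r^2\rho_0}I_\eta$, except for an additive term carried by the total mass. Substituting this into the linearized equilibrium equation should give $\cQ_0\eta=c_\mu\cdot 1$, a constant shift of the enthalpy. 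From this, $\langle \cQ_0\eta,\eta\rangle=c_\mu M'(\mu)$, and $\eta$ is $\cQ_0$-orthogonal to $\mathsf H_0$ precisely when $M'=0$; there it lies in the constrained kernel.

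It remains to count $n^-(\cQ_0)$ on the full radial space. For small $\kappa$, the form $\cQ_0$ is a perturbation of the barotropic form $\Psi''-4\pi(-\Delta)^{-1}$ plus a nonnegative term of size $O(\kappa)$. The comparison-branch hypotheses are used here to import the Lin--Zeng count, namely $n^-=1$ on the full space with nondegenerate kernel. Continuity of the Morse index under the small relatively form-bounded perturbation (made precise by the factorized pressure) then gives $n^-(\cQ_0)=1$. The constrained index is therefore $0$ or $1$, according to the sign of $\langle\cQ_0^{-1}1,1\rangle\propto M'/c_\mu$. This yields $0$ when $M'\ge0$, where $M'=0$ is handled by the kernel, and $1$ when $M'<0$. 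One-dimensionality of the constrained kernel at a simple maximum follows from $\eta$ spanning it and the nondegeneracy of $\cQ_0$ on the full space.

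\textbf{Part (i), fixed relation.} Existence for small central density will come from \cref{lem:fixed-relation-existence}: the effective pressure $P(f(\rho),\rho)$ is treated as a barotropic law, so the construction is a Lane--Emden-type shooting near the polytropic limit, continued along a compact branch under $\mathrm{(E_s)}$ and $\mathrm{(F)}$. The same differentiation gives a tangent $\eta=\partial_\mu\rho_\mu$ satisfying only $(\Psi''-4\pi(-\Delta)^{-1})\eta=c\cdot1$. Meanwhile the Schwarzschild term $\int\mathfrak G|\eta-\frac{\rho_0'}{4\pi r^2\rho_0}I_\eta|^2$ is strictly positive on $\eta$. It is positive because the vanishing of the integrand forces $\eta=\rho_0' I_\eta/(4\pi r^2\rho_0)$, an ODE whose only solution compatible with regularity at the center is zero, while $\eta\neq0$. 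Hence at a simple extremum, where $\eta\in\mathsf H_0$, we get $\cQ_0[\eta]>0$: the barotropic neutral direction is lifted. I would then argue that the constrained form has no kernel at the extremum. Under the stated nondegeneracy and index assumptions, any constrained kernel vector would have to be the barotropic neutral direction, and that direction is now positive. Since $n^-(\cQ_0|_{\mathsf H_0})$ is locally constant wherever the kernel is trivial (by continuity of the coefficients in $\mu$), the index does not jump at the mass extremum.

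\textbf{Main obstacle.} I expect the hardest step to be the exact identity $\cQ_0\eta=c\cdot1$ in part (ii). The reconstructed entropy perturbation must match $\partial_\mu S_\mu$ despite the vacuum degeneracy of $\mathfrak G$ and of $\rho_0'/\rho_0$ near $r=R$. This needs weighted estimates showing that $\eta$ lies in the form domain of $\cQ_0$. I would first verify the identity formally in Lagrangian mass coordinates and then justify the boundary terms using the physical-vacuum weights.
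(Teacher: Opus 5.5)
Your architecture matches the paper's. In (i) you use the barotropic normal identity for $L_\mu$, \cref{lem:one-constraint-inertia}, and the lifting of the neutral tangent by the Schwarzschild term. In (ii) you use a normal identity for the full Hessian with entropy inserted as a function of enclosed mass, an $O(\kappa)$ comparison with the isentropic branch giving $n^-=1$ and trivial kernel, and the same lemma.

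\textbf{The gap in (ii): the sign of the normal constant.} From $\cQ_0\eta=c\cdot1$ you only learn that the constrained index is decided by the sign of $M'/c$. Passing to ``$0$ when $M'>0$, $1$ when $M'<0$'' silently assumes $c<0$, and the lemma needs $c\neq0$ to apply at all. You never identify $c$. In the paper it comes from differentiating the Euler--Lagrange relation \eqref{eq:lagrangian-Euler-tail}, whose multiplier is $c_\mu=M_\mu/R_\mu$. The constant is therefore $-c_\mu'$. Its sign is the explicit hypothesis $c_{\mu,0}'\geq c_0>0$ in \eqref{eq:lagrangian-comparison-index-package}, transferred to $c_{\mu,\kappa}'>c_0/2$ by the $C^1$ convergence \eqref{eq:lagrangian-curve-convergence}. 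With the opposite sign your argument assigns the indices the other way round, so this is not cosmetic.

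Your formula with $\mathbf{1}[\langle\cQ_0^{-1}1,1\rangle<0]$ is also wrong in the null case. When $\langle\cQ_0^{-1}1,1\rangle=0$, the tangent and a dual vector span a hyperbolic plane. The constrained index then still drops by one while the radical gains $\Span\{v\}$. That is what gives index $0$, not $1$, at $M'=0$, and it is what ``handled by the kernel'' has to mean.

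\textbf{Other unaddressed steps in (ii).} Part (ii) also asserts that the $\kappa$-family exists, has a unique simple maximum $\mu_*^\kappa\to\mu_*$, and has trivial constrained kernel elsewhere in $I$. None of this follows from the inertia count. It needs:
\begin{itemize}
\item the construction of \cref{lem:lagrangian-family-existence}, including $\mathrm{(E_s)}$ for $\kappa>0$, without which the radial index theorem is unavailable;
\item the $C^2(I)$ convergence of $M_{\cdot,\kappa}$, which keeps $M_{\mu,\kappa}''<0$ near $\mu_*$ and preserves the sign of $M_{\mu,\kappa}'$ away from it.
\end{itemize}
Your ``continuity of the Morse index'' must be applied in operator norm on one fixed space, because the supports move. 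The paper uses the mass-preserving pullback, the bound \eqref{eq:lagrangian-form-convergence}, and a uniform inverse bound for the reference operators over the compact set $I$.

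Note too that $Q_0$ is defined a priori only on $\mathsf H_0$. The term $\int\mathfrak G|T\eta|^2$ can diverge at vacuum when $I_\eta(R)\neq0$. Your full-space form therefore needs the faster vanishing $\mathfrak G=O(\kappa\rho^{2\gamma_0-2})$ of this family. Alternatively, as in the paper, use the Hessian $\cQ^{\rm Lag}$ of \cref{prop:lagrangian-hessian}.

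\textbf{Part (i).} Here the argument is sound, but its key step is unstated. The claim ``any constrained kernel vector would have to be the barotropic neutral direction'' holds because of the null case of \cref{lem:one-constraint-inertia}. With $L_{\mu_*}[v_{\mu_*}]=-c_{\mu_*}'M_{\mu_*}'=0$ and $c_{\mu_*}'\neq0$, that case gives $L_{\mu_*}|_{H_{\mu_*}}\geq0$ with radical $\Span\{v_{\mu_*}\}$. Then $\cQ_{\mu_*}|_{H_{\mu_*}}\geq0$. A radical vector is therefore isotropic for both $L_{\mu_*}$ and $E_{\mu_*}$, and hence vanishes. Without that nonnegativity the conclusion does not follow. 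The paper then uses the Fredholm structure in $\mathrm{(F)}$ to upgrade to coercivity, which also rules out growing radial modes near $\mu_*$.
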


The simple maximum and the spectral properties of the continued
fixed-relation branch are explicit hypotheses; they do not follow from the
vacuum exponent.  The proofs show that fixing a function of density and
fixing a function of enclosed mass lead to different Hessian normal
identities.

Finally, suppose that $\mathfrak G<0$ on an interior open set.  The
hydrostatic equation still forces the effective compressibility to be
positive for the decreasing equilibria considered here
(see \eqref{eq:effective-pressure-positive} and
\eqref{eq:effective-pressure-positive-unstable}).  The velocity
operator is associated with the closed form
\begin{align*}
 \mathfrak q[u]
 &=\int_{B_R}\left(\mathfrak aD_u^2-2\mathfrak GD_ua_u
                      +\mathfrak Ga_u^2\right)\ud x
     -4\pi\langle(-\Delta)^{-1}D_u,D_u\rangle,\\
 D_u&=\Div(\rho_0u),\qquad a_u=\nabla\rho_0\cdot u,
 \qquad \mathfrak a=P_\rho(S_0,\rho_0)/\rho_0.
\end{align*}
Its domain is $\{u\in L^2_{\rho_0}:D_u\in L^2_{\mathfrak a}\}$.
The expanded expression is the definition; each of its terms is finite
under the coefficient bounds stated below.

\begin{theorem}[Schwarzschild instability]
\label{thm:intro-convective}
Under the coefficient and form-domain assumptions in
\cref{hyp:unstable-form}, the self-adjoint operator $\widetilde L$ associated with the
velocity form has $\inf\spec(\widetilde L)<0$.  The velocity wave equation
has exponentially growing data with zero initial time derivative, with rates
arbitrarily close to $\sqrt{-\inf\spec(\widetilde L)}$.  These data reconstruct
energy solutions of the original first-order linearized Euler--Poisson
system.
\end{theorem}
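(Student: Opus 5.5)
The strategy is to show that the closed form $\mathfrak q$ takes a negative value, and then convert that into growing solutions by spectral calculus.

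\emph{Step 1: negative bottom.} By \cref{hyp:unstable-form}, $\mathfrak q$ is closed and bounded below on $\{u\in L^2_{\rho_0}:D_u\in L^2_{\mathfrak a}\}$. It therefore defines a self-adjoint operator $\widetilde L$ on $L^2_{\rho_0}$ with
\[
\inf\spec(\widetilde L)=\inf_{u\neq 0}\frac{\mathfrak q[u]}{\|u\|_{L^2_{\rho_0}}^2}.
\]
It suffices to exhibit one $u$ with $\mathfrak q[u]<0$. The test fields will be incompressible relative to the background, $D_u=\Div(\rho_0u)=0$. Then the pressure term and the Newton term vanish, and
\[
\mathfrak q[u]=\int_{B_R}\mathfrak G\,a_u^2\ud x,\qquad a_u=\rho_0'(r)\,u_r .
\]
Choose an interior open set $U\Subset B_R$ on which $\mathfrak G<0$ and $\rho_0'<0$ (decreasing equilibrium). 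On $U$, set $\rho_0u=\nabla\times(\phi\,\mathbf e)$ with $\phi\in C_c^\infty(U)$. A convenient choice is a toroidal-type stream function, e.g.\ $\rho_0u=\nabla\times(\psi\,\mathbf e_\varphi)$ with $\psi$ supported in $U$, arranged so that $u_r\not\equiv0$. This gives $D_u=0$ and $u\in L^2_{\rho_0}$, so $u$ lies in the form domain, and
\[
\mathfrak q[u]=\int_U\mathfrak G\,(\rho_0')^2u_r^2\ud x<0 .
\]
These are the classical convective cells. One small point must be checked: the test field has to be admissible in the domain where the paper posed the problem. If the paper restricts to the radial class, incompressible radial fields vanish, and a localized compressible bump must be used instead, $u=\phi(r)\mathbf e_r$ with $\phi$ oscillating rapidly in $U$. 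In that case $D_u$ involves $\phi'$ while $a_u$ involves $\phi$, and I would instead take $\phi$ concentrated so that the $\mathfrak a D_u^2$ term is controlled. I expect the nonradial divergence-free choice to be the intended one, and it avoids that issue.

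\emph{Step 2: growth.} Let $\lambda_0=\inf\spec(\widetilde L)<0$. For any $\lambda\in(\lambda_0,0)$, the spectral projection $E=\mathbf 1_{(-\infty,\lambda]}(\widetilde L)$ is nonzero. For $u_0\in\Ran E$, set
\[
u(t)=\cosh\!\bigl(t\sqrt{-\widetilde L}\bigr)u_0 ,
\]
defined by functional calculus on the negative spectral part. This solves $u_{tt}+\widetilde Lu=0$ with $u_t(0)=0$. It grows at rate at least $\sqrt{-\lambda}$, which can be taken arbitrarily close to $\sqrt{-\lambda_0}$. It also grows at rate at most $\sqrt{-\lambda_0}$, so the rate is sharp. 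Taking $u_0\in\Ran E\cap\mathcal D(\widetilde L)$, for instance using the spectral measure on bounded sets, gives regularity.

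\emph{Step 3: reconstruction.} Define the remaining linearized fields by integrating the first-order system:
\[
\delta\rho(t)=-\int_0^t\Div(\rho_0u)\,ds,\qquad \delta S(t)=-\int_0^t u\cdot\nabla S_0\,ds .
\]
The task is then to check that the momentum equation holds in the weak/energy sense. Its right side, $-\rho_0^{-1}\nabla\delta P-\nabla V_{\delta\rho}-\ldots$, should equal $-\widetilde L u$ integrated in time, with $u_t(0)=0$ making the integration constants vanish. This identification is exactly how $\mathfrak q$ was derived. I expect it to be the main obstacle: one must show that the weak formulation tested against the form domain matches the distributional first-order system near the vacuum boundary. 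Here the weights $\mathfrak a$ and $\mathfrak G$ degenerate, and boundary terms in the integration by parts must vanish, which uses the coefficient bounds in \cref{hyp:unstable-form}.
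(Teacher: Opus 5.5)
Your Steps 1 and 2 follow the paper's argument. \cref{lem:convective-negative-direction} takes $u=w/\rho_0$ with $w=\nabla\psi\times e\in C_c^\infty(U;\R^3)$ and $\nabla\rho_0\cdot w\not\equiv0$. Then $D_u=0$ leaves only $\int_U\mathfrak G|a_u|^2\ud x<0$. \cref{thm:Schwarzschild-instability} uses $\cosh(t\sqrt{-\lambda})$ on a negative spectral subspace, as you do. Your radial fallback would fail: for $u=\phi(r)e_r$ with rapidly oscillating $\phi$, the local integrand is dominated by $\mathfrak a\rho_0^2|\phi'|^2>0$. It is not needed, because $\mathcal V$ contains non-radial fields.

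The first gap is that \cref{hyp:unstable-form} does not assert that $\mathfrak q$ is closed and lower semibounded on $\mathcal V$. This is \cref{lem:convective-closed-form}, and it is what the coefficient bound \eqref{eq:convective-coefficient-bound} is for. Without it you have neither $\widetilde L$ nor your Rayleigh-quotient formula for $\inf\spec(\widetilde L)$. The argument is short but must be made.
\begin{itemize}
\item The maximal graph $u\mapsto D_u$ from $Y$ to $L^2_{\mathfrak a}$ is closed: pass to distributional limits, using boundedness of $\mathfrak a^{-1}$.
\item \eqref{eq:convective-coefficient-bound} gives $\int|\mathfrak G|a_u^2\ud x+\int\mathfrak G^2a_u^2/\mathfrak a\ud x\leq C\|u\|_Y^2$. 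Hence the cross term has relative form bound $\eps$.
\item The Newton term is bounded by $4\pi\|\rho_0\|_{L^\infty}\|u\|_Y^2$. For this, check $\Div(\mathbf 1_{B_R}\rho_0u)=\mathbf 1_{B_R}D_u$ in $\mathcal D'(\R^3)$; there is no surface layer since $\rho_0\asymp(R-r)^\alpha$ with $\alpha>1$. Then use $\langle(-\Delta)^{-1}\Div w,\Div w\rangle\leq\|w\|_{L^2}^2$.
\end{itemize}
KLMN then closes the form.

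The second, more substantial gap is the reconstruction. It is part of the statement, but you leave it as an expectation. The real work is not a boundary integration by parts. In \cref{prop:convective-reconstruction} it consists of three steps.
\begin{enumerate}
\item[(i)] Spaces for the new variables. One has $\eta\in C^1(\R;L^2_{\mathfrak a})$, because $\|D_v\|_{L^2_{\mathfrak a}}\leq\|v\|_{\mathcal V}$. One has $s\in C^1(\R;\mathcal S_f)$, where $\mathcal S_f$ carries the weight $\rho_0/|\nabla S_0|^2$ off $\{\nabla S_0=0\}$. Since $\mathrm{(U)}$ lets $f'$ be unbounded at the endpoints and vanish, you should not pass through $s/f'$.
\item[(ii)] An energy-space meaning for the momentum equation. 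This is the expanded weak identity \eqref{eq:convective-expanded-momentum}, obtained by testing \eqref{eq:linearized-EP} against smooth interior fields and extended by continuity to all $v\in\mathcal V$. Its entropy terms are continuous on $\mathcal S_f\times\mathcal V$: the identity $(P_S/\rho_0)|\nabla S_0|=|\mathfrak G||\nabla\rho_0|$ reduces them to the two bounds in \eqref{eq:convective-coefficient-bound}. The term $\int\mathfrak G\eta a_v\ud x$ is controlled by the same bounds.
\item[(iii)] An algebraic identity. Substitute $\eta=-\int_0^tD_u\ud\tau$, $s=-\int_0^tu\cdot\nabla S_0\ud\tau$ and $(P_S/\rho_0)f'=-\mathfrak G$ into \eqref{eq:convective-expanded-momentum}. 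This yields exactly \eqref{eq:convective-integrated-momentum}, which follows from $\langle u_{tt},v\rangle+\mathfrak q[u,v]=0$ in $\mathcal V^*$ together with $u_t(0)=0$.
\end{enumerate}
Posed this way on the maximal form domain, the identification with the first-order system involves no boundary terms at all.
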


The classical papers of Lebovitz are close predecessors of the
non-radial and Schwarzschild parts of the present work.  Using
Chandrasekhar's displacement principle, Lebovitz proved that the
non-radial characteristic frequencies are real when the Schwarzschild
discriminant is positive and identified the translational kernel in the
$\ell=1$ sector \cite{Lebovitz1965}.  Passing from a normal-mode analysis
to evolution of general initial data requires a specified operator domain
and phase space.  The Hamiltonian group in
\cref{thm:intro-main-index} provides this operator-theoretic step in the phase
space used here, while retaining the neutral translations and stationary
velocities.  Lebovitz subsequently characterized neutral convective modes
when the discriminant vanishes on a layer \cite{LebovitzOnset1965} and
constructed a destabilizing displacement when it is negative
\cite{Lebovitz1966}.  The form construction behind
\cref{thm:intro-convective} is the Eulerian counterpart of the latter
argument; it supplies spectral growth rates and a weak reconstruction of
the first-order variables in the spaces specified here.  Self-adjoint
displacement realizations for nonbarotropic adiabatic oscillations are
already available in \cite{Makino2023}.  Our general-pressure Hamiltonian
formulation and exact radial entropy-Casimir reduction address different
parts of the stability problem.

The results are linear and concern perturbations in the reference fluid
region.  They do not imply nonlinear stability of the physical-vacuum free
boundary.  The new points are the closed Hamiltonian realization, the exact
radial reduction of the entropy constraints, the different inertia
identities for the two entropy prescriptions, and the closed-form treatment
of Schwarzschild instability.

\Cref{sec:hypotheses} states the assumptions and stability notions.
\Cref{sec:hamiltonian} establishes the Hamiltonian formulation and its
functional setting.  The non-radial and radial reductions are proved in
\cref{sec:nonradial,sec:radial}.  The two families are considered in
\cref{sec:fixed-relation,sec:fixed-lagrangian}.  Schwarzschild instability is
proved in \cref{sec:convective}.

\section{Hypotheses, notation, and stability notions}\label{sec:hypotheses}

We state the assumptions in packages so that every later result can refer to
the exact hypotheses it uses.  Stronger, family-specific assumptions are added
only where parameter differentiation or perturbation from an isentropic
family is required.

\Needspace{7\baselineskip}
\subsection{Pressure and vacuum asymptotics}

\begin{hypothesis}[Pressure package $\mathrm{(P)}$]\label{hyp:pressure}
The pressure satisfies
\[
 P\in C^3(\R\times(0,\infty))\cap C^1(\R\times[0,\infty)),
 \qquad P(S,0)=0,
\]
and
\[
 P_\rho(S,\rho)>0,
 \qquad P_S(S,\rho)>0
 \quad (\rho>0).
\]
There are $\gamma_0\in(6/5,2)$ and $\alpha_0>0$ such that, on every compact
entropy interval $J$, one can find $K\in C^2(J)$ with
$\inf_JK>0$ and
\begin{equation}\label{eq:uniform-pressure-asymptotics}
 \partial_S^j\partial_\rho^k
 \bigl(P(S,\rho)-K(S)\rho^{\gamma_0}\bigr)
 =O_J\!\left(\rho^{\gamma_0-k+\alpha_0}\right),
 \qquad j+k\leq2,\quad k\leq2,
\end{equation}
as $\rho\downarrow0$, uniformly for $S\in J$.  All third derivatives used
below satisfy the explicit bounds
\begin{equation}
 \bigl|\partial_S^j\partial_\rho^kP(S,\rho)\bigr|
 \leq C_J\rho^{\gamma_0-k},
 \qquad j+k=3,
 \quad 0<\rho\leq1,
\end{equation}
uniformly for $S\in J$.  In particular,
$P_S(S,\rho)/\rho^2$ is integrable at vacuum.
\end{hypothesis}

Define the internal-energy density by the vacuum normalization
\begin{equation}\label{eq:Phi-definition}
 \Phi(S,\rho)
 =\rho\int_0^\rho\frac{P(S,\tau)}{\tau^2}\ud\tau,
 \qquad \Phi(S,0)=\Phi_\rho(S,0)=0.
\end{equation}
Then
\begin{equation}\label{eq:thermodynamic-identities}
 \rho\Phi_\rho-\Phi=P,
 \qquad
 \Phi_{\rho\rho}=\frac{P_\rho}{\rho},
 \qquad
 \partial_\rho\!\left(\frac{\Phi_S}{\rho}\right)
   =\frac{P_S}{\rho^2},
\end{equation}
and hence
\begin{equation}\label{eq:pressure-gradient-identity}
 \frac{\nabla P(S,\rho)}{\rho}
 =\nabla\Phi_\rho(S,\rho)-\frac{\Phi_S(S,\rho)}{\rho}\nabla S.
\end{equation}
These formulas follow directly by differentiating
\eqref{eq:Phi-definition}; the uniform remainder in
\eqref{eq:uniform-pressure-asymptotics} justifies differentiation at the
vacuum endpoint.

\subsection{Equilibria and the Schwarzschild coefficient}

\begin{hypothesis}[Equilibrium package $\mathrm{(E_s)}$]\label{hyp:stable-equilibrium}
The triple $(\rho_0,S_0,0)$ is a compactly supported radial equilibrium with
support $\overline{B_R}$, total mass
\[
 M=4\pi\int_0^Rr^2\rho_0(r)\ud r>0,
\]
and
\begin{equation}\label{eq:equilibrium-density-regularity}
\begin{gathered}
 \rho_0\in C^2([0,R))\cap C^1([0,R]),
 \qquad \rho_0(0)>0,\\
 \rho_0'(r)<0\quad(0<r<R),
 \qquad \rho_0(R)=0.
\end{gathered}
\end{equation}
There is
$f\in C^1([0,\rho_0(0)])\cap C^2((0,\rho_0(0)))$
such that $S_0=f(\rho_0)$. Let
$$ \widetilde P(\rho_0):=P(f(\rho_0),\rho_0)$$
 and assume that
\begin{equation}\label{eq:effective-pressure-positive}
 \widetilde P'(\rho_0)
 :=P_\rho(S_0,\rho_0)+P_S(S_0,\rho_0)f'(\rho_0)>0.
\end{equation}
The Schwarzschild-stable package additionally requires
\begin{equation}\label{eq:Schwarzschild-stable}
 \mathfrak G(r)
 :=-\frac{P_S(S_0,\rho_0)f'(\rho_0)}{\rho_0}>0
 \quad\hbox{for almost every }0<r<R,
\end{equation}
with $\mathfrak G^{-1}\in L^1_{\mathrm{loc}}(B_R)$; endpoint vanishing at
$r=0$ or $r=R$ is allowed.  The coefficient $\mathfrak G$ is
the Schwarzschild coefficient \eqref{eq:G-intro}.  We require
\begin{equation}\label{eq:weight-comparability}
 \begin{gathered}
 0<\mathfrak G(r)\leq C\rho_0^{\gamma_0-1}(r)
 \quad\text{for $r$ near $R$},\\
 \lim_{\rho\downarrow0}\frac{\widetilde P'(\rho)}{\rho^{\gamma_0-1}}
 =\gamma_0K(f(0))>0.
 \end{gathered}
\end{equation}
\end{hypothesis}

Because $P_S>0$, the almost-everywhere sign in
\eqref{eq:Schwarzschild-stable}, together with
$\mathfrak G^{-1}\in L^1_{\mathrm{loc}}(B_R)$, actually implies
\begin{equation}\label{eq:G-strict-interior}
 \mathfrak G(r)>0\qquad(0<r<R).
\end{equation}
Indeed, on every compact shell contained in $(0,R)$ the regularity of
$P,f$, and $\rho_0$ makes $\mathfrak G$ locally Lipschitz.  Its
almost-everywhere positivity and continuity first give $\mathfrak G\geq0$.
If $\mathfrak G(r_0)=0$ at an interior point, then
$0\leq\mathfrak G(r)\leq C|r-r_0|$ near $r_0$, and hence
\[
 \int_{r_0-\delta}^{r_0+\delta}
 \frac{r^2}{\mathfrak G(r)}\ud r
 \geq c\int_{r_0-\delta}^{r_0+\delta}
 \frac{\ud r}{|r-r_0|}=\infty,
\]
contrary to the reciprocal-integrability hypothesis.  Thus, since
$P_S>0$ and $\rho_0>0$ in the open ball,
\eqref{eq:G-strict-interior} is equivalent to $f'<0$ throughout the
interior of the density range; $f'$ may still vanish at its endpoints.

The effective pressure potential is
\begin{equation}\label{eq:Psi-definition}
 \Psi(\rho)=\rho\int_0^\rho\frac{\widetilde P(\tau)}{\tau^2}\ud\tau,
 \qquad
 \Psi(0)=\Psi'(0)=0,
 \qquad
 \Psi''(\rho)=\frac{\widetilde P'(\rho)}{\rho}.
\end{equation}

\begin{lemma}\label{lem:physical-vacuum}
Under $\mathrm{(P)}$ and $\mathrm{(E_s)}$, there are $r_1<R$ and positive
constants $c,C$ such that, for $r_1<r<R$,
\begin{equation}\label{eq:physical-vacuum}
 c(R-r)^{1/(\gamma_0-1)}
 \leq\rho_0(r)\leq
 C(R-r)^{1/(\gamma_0-1)},
\end{equation}
and
\begin{equation}\label{eq:physical-vacuum-derivative}
 c(R-r)^{(2-\gamma_0)/(\gamma_0-1)}
 \leq-\rho_0'(r)\leq
 C(R-r)^{(2-\gamma_0)/(\gamma_0-1)}.
\end{equation}
\end{lemma}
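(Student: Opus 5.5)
Since $S_0=f(\rho_0)$, the equilibrium equation $\nabla P(S_0,\rho_0)+\rho_0\nabla V_0=0$ reduces in radial form to the barotropic relation
\[
 \widetilde P'(\rho_0)\rho_0'=-\rho_0\frac{m_0(r)}{r^2},
 \qquad m_0(r)=4\pi\int_0^r s^2\rho_0(s)\ud s .
\]
Here I use the chain rule $\frac{\ud}{\ud r}\widetilde P(\rho_0)=\widetilde P'(\rho_0)\rho_0'$ and $V_0'=m_0/r^2$. Dividing by $\rho_0$ and using $\Psi''=\widetilde P'/\rho$ from \eqref{eq:Psi-definition} gives
\[
 \frac{\ud}{\ud r}\Psi'(\rho_0(r))=-\frac{m_0(r)}{r^2}.
\]
This expresses the enthalpy $h(r):=\Psi'(\rho_0(r))$ directly. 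Since $\Psi'(0)=0$ and $\rho_0(R)=0$, integrating from $r$ to $R$ yields
\[
 h(r)=\int_r^R\frac{m_0(s)}{s^2}\ud s .
\]
As $m_0(s)/s^2$ is continuous on $[r_1,R]$ and tends to $M/R^2>0$ at $s=R$, for $r_1$ close to $R$ we get $c(R-r)\le h(r)\le C(R-r)$ and $-h'(r)\in[c,C]$.

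Next I invert $h$. By the second condition in \eqref{eq:weight-comparability}, $\Psi''(\rho)=\widetilde P'(\rho)/\rho=\gamma_0K(f(0))\rho^{\gamma_0-2}(1+o(1))$ as $\rho\downarrow0$. Integrating from $0$, which is legitimate since $\gamma_0-2>-1$ and $\Psi'(0)=0$, gives
\[
 \Psi'(\rho)=\frac{\gamma_0K(f(0))}{\gamma_0-1}\rho^{\gamma_0-1}(1+o(1)).
\]
Hence for $\rho\le\rho_*$ small, $\Psi'(\rho)$ is comparable to $\rho^{\gamma_0-1}$. Because $\rho_0$ is continuous, strictly decreasing and vanishes at $R$, we may shrink $r_1$ toward $R$ so that $\rho_0\le\rho_*$ on $(r_1,R)$. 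Then $\rho_0^{\gamma_0-1}\simeq h\simeq R-r$, which is \eqref{eq:physical-vacuum}.

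For the derivative bound, I use $-\rho_0'=-h'/\Psi''(\rho_0)$ on $(r_1,R)$. The denominator satisfies $\Psi''(\rho_0)\simeq\rho_0^{\gamma_0-2}\simeq(R-r)^{(\gamma_0-2)/(\gamma_0-1)}$ by the first step, and $-h'\simeq1$. This gives \eqref{eq:physical-vacuum-derivative}, with exponent $(2-\gamma_0)/(\gamma_0-1)$.

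The main obstacle is justifying the first step, namely the reduced equilibrium ODE and the chain rule up to the vacuum. Hypothesis (E$_s$) only says that $(\rho_0,S_0,0)$ is an equilibrium, so I must check that the radial momentum equation holds pointwise on $(0,R)$ in the form above. This follows from $\rho_0\in C^1([0,R])$ together with $P\in C^1$ and $f\in C^1$, so that $\widetilde P(\rho_0)\in C^1$ and the chain rule applies on $(0,R)$. At the boundary I need $h(R)=\Psi'(0)=0$, which holds by the vacuum normalization and the continuity of $\Psi'$ at $0$, which in turn comes from the integrability of $\rho^{\gamma_0-2}$. Positivity of $\widetilde P'$ by \eqref{eq:effective-pressure-positive} ensures that $\Psi'$ is strictly increasing, so $h$ determines $\rho_0$ uniquely.
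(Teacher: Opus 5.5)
Your proposal is correct and follows the paper's proof essentially step for step. You integrate the hydrostatic equation to $\Psi'(\rho_0(r))=\int_r^R m_0(s)s^{-2}\ud s\asymp R-r$; the paper writes the same identity as $\Psi'(\rho_0(r))-\Psi'(0)=V_0(R)-V_0(r)$. You then invert using $\Psi'(\rho)\asymp\rho^{\gamma_0-1}$ from \eqref{eq:weight-comparability}, and get the derivative bound from $\rho_0'=h'/\Psi''(\rho_0)$ with $\Psi''(\rho_0)\asymp\rho_0^{\gamma_0-2}$. Your added checks of the chain rule up to vacuum and of $h(R)=\Psi'(0)=0$ simply make explicit what the paper leaves implicit.
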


\begin{proof}
The hydrostatic equation is
\[
 \widetilde P'(\rho_0)\rho_0'=-\rho_0V_0',
 \qquad
 V_0'(r)=\frac{4\pi}{r^2}\int_0^rs^2\rho_0(s)\ud s.
\]
Thus $V_0'(R)=M/R^2>0$.  By \eqref{eq:Psi-definition},
\[
 \Psi'(\rho_0(r))-\Psi'(0)=V_0(R)-V_0(r).
\]
The right-hand side is comparable to $R-r$, while
\eqref{eq:uniform-pressure-asymptotics} and
\eqref{eq:weight-comparability} give
$\Psi'(\rho_0)-\Psi'(0)\asymp\rho_0^{\gamma_0-1}$ (here and
below, $f\asymp g$ means $cf\leq g\leq Cf$ for positive constants
$c,C$).  This proves
\eqref{eq:physical-vacuum}.  Differentiating the preceding identity and using
$\Psi''(\rho_0)\asymp\rho_0^{\gamma_0-2}$ gives
\eqref{eq:physical-vacuum-derivative}.
\end{proof}

\subsection{Weighted spaces and duality}

For a positive weight $w$ on $B_R$, set
\begin{equation}\label{eq:weighted-space}
 L_w^2(B_R)=\left\{h:\int_{B_R}w|h|^2\ud x<\infty\right\},
 \qquad
 \|h\|_{L_w^2}^2=\int_{B_R}w|h|^2\ud x.
\end{equation}
We identify $(L_w^2)^*$ with $L_{1/w}^2$ through the unweighted pairing
$\langle h,k\rangle=\int h k\ud x$.  In the Schwarzschild-stable regime,
\begin{equation}\label{eq:phase-spaces}
\begin{gathered}
 X_1=L_{\Psi''(\rho_0)}^2(B_R),
 \qquad X_2=L_{\mathfrak G}^2(B_R),\\
 X=X_1\times X_2,
 \qquad Y=L_{\rho_0}^2(B_R;\R^3).
\end{gathered}
\end{equation}

\subsection{Stability notions and parameterized families}
For a family with moving support, spectral continuity must be formulated on a
fixed space.  We use the mass-preserving pullback
\begin{equation}\label{eq:mass-preserving-pullback}
 (\mathcal U_\mu\eta)(s)=R_\mu^3\eta(R_\mu s),
 \qquad 0<s<1.
\end{equation}
Here $R_\mu$ is the first vacuum radius of the equilibrium with
central density $\mu$.

\begin{hypothesis}[Family package $\mathrm{(F)}$]\label{hyp:family}
On the compact parameter set under consideration, the mass and radius are
$C^2$ in the family parameters.  The equilibrium profiles are $C^2$ in
those parameters on compact subsets of the positive-density interior.
The first derivative $v_\mu=\partial_\mu\rho_\mu$ at fixed physical radius,
extended by zero, belongs to the relevant weighted density space, and
$\int v_\mu\ud x=M_\mu'$.  No second derivative of the zero extension in
that weighted norm is assumed.  The pullbacks
\eqref{eq:mass-preserving-pullback} identify the moving spaces with one fixed
radial Hilbert space.  In this realization the quadratic forms are
norm-continuous.  Each form is the sum of a uniformly coercive bounded
symmetric form and a compact form and has finite Morse index.
\end{hypothesis}

The general family results explicitly assume $\mathrm{(F)}$.  The
fixed-relation theorem assumes it on the compact branch under consideration;
the construction in \cref{sec:fixed-lagrangian} verifies it for the
small fixed-Lagrangian-entropy family.  Notice that
none of the inertia arguments requires differentiability in operator norm of
the singular reconstruction map; norm continuity of the pulled-back forms is
the exact property used below.

\begin{definition}[Turning-point principle]\label{def:TPP}
Let a $C^2$ central-density family have a simple mass extremum at $\mu_*$,
meaning $M'(\mu_*)=0$ and $M''(\mu_*)\ne0$.  We
call $\mu_*$ a \emph{radial stability transition} if the constrained form has
a one-dimensional kernel there, is nondegenerate immediately on either side,
and its Morse index changes by one across $\mu_*$.  The family satisfies the
\emph{first turning-point principle} on an interval if its first radial
stability transition is its first simple mass extremum.  This definition
concerns the growing-mode count; a neutral mode at the extremum need not give
a uniformly bounded center group.
\end{definition}

\section{Hamiltonian realization of the linearized equations}
\label{sec:hamiltonian}

Throughout this section we assume the pressure package $\mathrm{(P)}$ and the
Schwarzschild-stable equilibrium package $\mathrm{(E_s)}$.  All differential
expressions below are initially understood in the sense of distributions on
$B_R$.  This convention is important because the coefficient
$\nabla\rho_0/\rho_0$ is singular at the vacuum boundary.

\subsection{Linearized variables and the energy blocks}

Let $\eta$, $s$, and $u$ denote, respectively, the density, entropy, and
velocity perturbations.  Linearizing \eqref{eq:EP} at
$(\rho_0,S_0,0)$ gives
\begin{equation}\label{eq:linearized-EP}
\left\{
\begin{aligned}
 &\partial_t\eta+\Div(\rho_0u)=0,\\
 &\partial_t s+u\cdot\nabla S_0=0,\\
 &\partial_tu
 +\frac1{\rho_0}\nabla\bigl(P_\rho(S_0,\rho_0)\eta
                         +P_S(S_0,\rho_0)s\bigr)
-\frac{\eta}{\rho_0^2}\nabla P(S_0,\rho_0)+\nabla V_\eta=0,\\
 &
 V_\eta=-4\pi(-\Delta)^{-1}\eta.
\end{aligned}
\right.
\end{equation}
Introduce the density--entropy variables
\begin{equation}\label{eq:eta-zeta-variables}
 z=\binom{\eta}{\zeta},
 \qquad
 \zeta=\eta-\frac{s}{f'(\rho_0)}.
\end{equation}
Since $S_0=f(\rho_0)$, the first two equations in
\eqref{eq:linearized-EP} imply
\begin{equation}\label{eq:z-kinematic-equation}
 \partial_tz
 =\binom{-\Div(\rho_0u)}
 {-\Div(\rho_0u)+(\nabla\rho_0/\rho_0)\cdot(\rho_0u)}.
\end{equation}

Set
\begin{equation}\label{eq:a0-and-weights}
 a_0=\frac{\nabla\rho_0}{\rho_0},
 \qquad
 w_1=\Psi''(\rho_0),
 \qquad
 w_2=\mathfrak G.
\end{equation}
On the spaces in \eqref{eq:phase-spaces}, define
\begin{equation}\label{eq:A-and-mathbbL}
 Au=\rho_0u:Y\longrightarrow Y^*,
 \qquad
 \mathbb L=
 \begin{pmatrix}
  L_{11}&0\\0&L_{22}
 \end{pmatrix}:X\longrightarrow X^*,
\end{equation}
where
\begin{equation}\label{eq:L11-L22}
 L_{11}=w_1-4\pi(-\Delta)^{-1},
 \qquad
 L_{22}=w_2.
\end{equation}
Here $(-\Delta)^{-1}\eta$ denotes the Newton potential restricted to $B_R$.
The Riesz map $A$ is an isometric isomorphism from $Y$ to $Y^*$, and
$L_{22}$ is the Riesz map from $X_2$ to $X_2^*$.

Using the hydrostatic equation and the definitions of $\Psi$ and
$\mathfrak G$, the momentum equation in \eqref{eq:linearized-EP} becomes
\begin{equation}\label{eq:linearized-momentum-separated}
 \partial_tu
 =-\nabla(L_{11}\eta)-\bigl(\nabla+a_0\bigr)(w_2\zeta).
\end{equation}
This identity also follows from the energy--Casimir calculation in
\cref{lem:energy-casimir-hessian} below.

\subsection{The singular first-order operator}

Define the maximal distributional operator
\begin{equation}\label{eq:B-maximal}
 B:D(B)\subset Y^*\longrightarrow X,
 \qquad
 Bq=\binom{B_1q}{B_2q}
 =\binom{-\Div q}{-\Div q+a_0\cdot q},
\end{equation}
with graph domain
\begin{equation}\label{eq:B-domain}
 D(B)=\left\{q\in L^2_{1/\rho_0}(B_R;\R^3):
 \begin{array}{l}
 -\Div q\in X_1,\\[-2pt]
 -\Div q+a_0\cdot q\in X_2
 \end{array}
 \right\}.
\end{equation}
The two conditions in \eqref{eq:B-domain} are distributional conditions in
the open ball, with the weighted spaces understood as in
\eqref{eq:weighted-space}.  No pointwise trace is imposed at
$\partial B_R$.

\begin{lemma}\label{lem:B-closed}
The operator $B$ in \eqref{eq:B-maximal}--\eqref{eq:B-domain} is densely
defined and closed.  Its dual operator
\[
 B':D(B')\subset X^*\longrightarrow Y
\]
is densely defined and closed.  If $\phi=(\phi_1,\phi_2)\in D(B')$, then
\begin{equation}\label{eq:B-prime-expression}
 B'\phi=\nabla\phi_1+\nabla\phi_2+a_0\phi_2
\end{equation}
in distributions.  More precisely, its domain is the exact abstract dual
domain
\begin{equation}\label{eq:B-prime-domain}
 D(B')=\left\{\phi\in X^*:
 \begin{array}{c}
 \text{there is a }v\in Y\text{ such that}\\[-2pt]
 \langle Bq,\phi\rangle_{X,X^*}
 =\langle q,v\rangle_{Y^*,Y}\quad(q\in D(B))
 \end{array}\right\},
\end{equation}
and $B'\phi=v$.  Thus \eqref{eq:B-prime-expression} is a necessary
distributional identity, while the boundary condition is encoded by
\eqref{eq:B-prime-domain}; the formal expression alone does not characterize
the domain.  In particular, $B''=B$.
\end{lemma}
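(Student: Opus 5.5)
The plan is to treat $B$ as a maximal distributional operator between Hilbert spaces and use the general fact that such operators are closed because distributional differentiation is continuous. Density of $D(B)$ and of $D(B')$ will be proved by exhibiting explicit dense subspaces. Reflexivity of Hilbert spaces then gives $B''=B$.

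\emph{Closedness.} Take $q_n\in D(B)$ with $q_n\to q$ in $Y^*=L^2_{1/\rho_0}$ and $Bq_n\to(g_1,g_2)$ in $X$. On each compact $K\subset B_R$ the weights $1/\rho_0$, $w_1$, $w_2$ are bounded above and below. Here $w_2$ is bounded below on compacts because $\mathfrak G$ is continuous and positive in the interior by \eqref{eq:G-strict-interior}; the weight $w_2$ may degenerate at $r=0$, so near the origin we instead use that $L^2_{w_2}$ convergence gives convergence in distributions on the set where $w_2\geq c>0$ together with $L^1_{\rm loc}$ control via $\mathfrak G^{-1}\in L^1_{\rm loc}$.
\begin{itemize}
\item By Cauchy--Schwarz with $\mathfrak G^{-1}\in L^1_{\rm loc}$, $X_2\hookrightarrow L^1_{\rm loc}(B_R)$, and similarly $X_1,Y^*\hookrightarrow L^1_{\rm loc}$. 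So all convergences hold in $\mathcal D'(B_R)$.
\item Since $a_0$ is smooth enough (in $C^1$ of $B_R$) and bounded on compacts, $-\Div q_n\to-\Div q$ and $a_0\cdot q_n\to a_0\cdot q$ in $\mathcal D'$.
\end{itemize}
Uniqueness of distributional limits then gives $Bq=(g_1,g_2)$ with $q\in D(B)$, so $B$ is closed.

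\emph{Density of $D(B)$.} $C_c^\infty(B_R;\R^3)\subset D(B)$, since for such $q$ both components are bounded with compact support in the open ball, and at $r=0$ the weights are integrable because $\rho_0(0)>0$. Moreover $C_c^\infty$ is dense in $L^2_{1/\rho_0}$, because $1/\rho_0$ is locally bounded in $B_R$ (truncate, then mollify).

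\emph{The dual.} For a densely defined operator between Hilbert spaces (identifying $X^*,Y$ as dual to $X,Y^*$), the adjoint is defined on the abstract domain \eqref{eq:B-prime-domain}. It is automatically closed, and since $B$ is closed it is densely defined; this is the standard theorem that $T^*$ is densely defined iff $T$ is closable, and then $T^{**}=\overline T=T$. Transporting through the Riesz isometries gives $B''=B$.

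\emph{The formula \eqref{eq:B-prime-expression}.} Test the defining identity with $q\in C_c^\infty$:
\[
\langle Bq,\phi\rangle=\int(-\Div q)\phi_1+(-\Div q+a_0\cdot q)\phi_2\,dx .
\]
Here $\phi_j\in X_j^*=L^2_{1/w_j}\subset L^1_{\rm loc}$. Reading this distributionally, it equals $\langle q,\nabla\phi_1+\nabla\phi_2+a_0\phi_2\rangle$, which identifies $v$.

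The main obstacle is the local integrability near the origin where $\mathfrak G$ may vanish. The hypothesis $\mathfrak G^{-1}\in L^1_{\rm loc}$ is exactly what gives $X_2\subset L^1_{\rm loc}$. The dual space $X_2^*=L^2_{1/\mathfrak G}$ is locally integrable only if $\mathfrak G\in L^1_{\rm loc}$, which holds by continuity of the coefficients.
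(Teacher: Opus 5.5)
Your proposal is correct and follows essentially the same route as the paper. Smooth compactly supported fields lie in $D(B)$ and are dense in $Y^*$ by cutoff and mollification. Closedness comes from $L^1_{\mathrm{loc}}$ convergence via weighted Cauchy--Schwarz, with $\mathfrak G^{-1}\in L^1_{\mathrm{loc}}$ handling a possibly degenerate center. The abstract duality theorem gives that $B'$ is densely defined and closed with $B''=B$, and testing against $C_c^\infty$ fields yields \eqref{eq:B-prime-expression}.

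Two small fixes are needed. Your opening claim that $w_2$ is bounded below on every compact $K\subset B_R$ fails when $K$ contains the origin; your fallback to $\mathfrak G^{-1}\in L^1_{\mathrm{loc}}$ is exactly what is needed, so drop the claim. In the last step, state explicitly, as the paper does, that $a_0\phi_2$ and $v\in Y$ are locally integrable, so that the identification of $v$ holds in distributions.
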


\begin{proof}
Every vector field in $C_c^\infty(B_R;\R^3)$ belongs to $D(B)$.

To verify density in the singularly weighted space, let $q\in Y^*$ and
choose radial cutoffs $\chi_\eps\in C_c^\infty(B_R)$ with
$0\leq\chi_\eps\leq1$ and $\chi_\eps=1$ on $B_{R-2\eps}$.  Absolute
continuity of the weighted integral gives
\[
 \|(1-\chi_\eps)q\|_{Y^*}^2
 \leq\int_{B_R\setminus B_{R-2\eps}}
       \frac{|q|^2}{\rho_0}\ud x\longrightarrow0.
\]
For fixed $\eps$, the weight $1/\rho_0$ is bounded above and below on a
compact neighborhood of $\operatorname{supp}\chi_\eps$.  Ordinary
mollification therefore approximates $\chi_\eps q$ in the $Y^*$ norm by
fields in $C_c^\infty(B_R;\R^3)$.  Hence this test class is dense and $B$
is densely defined.

Suppose that $q_n\to q$ in $Y^*$ and $Bq_n\to(g_1,g_2)$ in $X$.  On each
compact set $K\Subset B_R$, weighted Cauchy--Schwarz gives
\[
 \|q_n-q\|_{L^1(K)}
 \leq \|q_n-q\|_{Y^*}\left(\int_K\rho_0\ud x\right)^{1/2}\longrightarrow0.
\]
For each $i$, weighted Cauchy--Schwarz and
$w_i^{-1}\in L^1(K)$ give
\[
 \|B_iq_n-g_i\|_{L^1(K)}
 \leq \|B_iq_n-g_i\|_{X_i}
       \left(\int_Kw_i^{-1}\ud x\right)^{1/2}\longrightarrow0.
\]
Here the assertion for $w_1$ follows from interior positivity, while that for
$w_2$ is part of $\mathrm{(E_s)}$ and allows $\mathfrak G$ to vanish at the
center.  The coefficient $a_0$ is bounded on $K$.
Passing to the limit in distributions yields
\[
 -\Div q=g_1,
 \qquad
 -\Div q+a_0\cdot q=g_2.
\]
Hence $q\in D(B)$ and $Bq=(g_1,g_2)$, proving closedness.

The general duality theorem for a densely defined closed operator now shows
that $B'$ is densely defined and closed and that $B''=B$.
Definition \eqref{eq:B-prime-domain} is precisely the Hilbert-space dual
definition and also proves uniqueness of $v$.

Before testing distributionally, note that every term is locally integrable.
If $K\Subset B_R$, then
\[
 \|\phi_i\|_{L^1(K)}
 \leq \|\phi_i\|_{X_i^*}
       \left(\int_Kw_i\ud x\right)^{1/2}<\infty,
 \qquad i=1,2.
\]
Here $w_1$ and $w_2=\mathfrak G$ are locally bounded on $K$.  Also
$a_0\in L^\infty(K)$, so
$a_0\phi_2\in L^1(K)$, while $v=B'\phi\in Y$ belongs to $L^1(K)$ because
$\rho_0$ is bounded below on $K$.

Finally, testing the defining identity
\[
 \langle Bq,\phi\rangle_{X,X^*}
 =\langle q,B'\phi\rangle_{Y^*,Y}
\]
with $q\in C_c^\infty(B_R;\R^3)$ and integrating by parts gives
\eqref{eq:B-prime-expression}.
\end{proof}

\begin{remark}\label{rem:normalization-LZ}
The maximal-graph proof avoids an unnecessary boundary-trace argument.  If
one instead conjugates the two components to unweighted $L^2$ spaces, the
correct operators are
\[
 -\sqrt{w_1}\,\Div(\sqrt{\rho_0}\,\cdot),
 \qquad
 \sqrt{w_2}\bigl[-\Div(\sqrt{\rho_0}\,\cdot)
                 +a_0\cdot\sqrt{\rho_0}\,\cdot\bigr].
\]
Thus the codomain weights multiply the differential expressions; they do not
occur with inverse square roots.  This is the normalization used for the
barotropic operator in \cite{LinZeng2022}.
\end{remark}

\subsection{Compactness of the potential block}

Let $I_X:X^*\to X$ denote the Riesz isomorphism.  The Riesz representative of
$\mathbb L$ is
\begin{equation}\label{eq:Riesz-L}
 I_X\mathbb L
 =\begin{pmatrix}
 \Id-\dfrac{4\pi}{w_1}(-\Delta)^{-1}&0\\
 0&\Id
 \end{pmatrix}.
\end{equation}

The compact-potential argument of Lemma 3.6 in \cite{LinZeng2022} applies to the
effective pressure $\widetilde P$.
\begin{lemma}\label{lem:potential-compact}
The operator
\[
 \eta\longmapsto \frac1{w_1}(-\Delta)^{-1}\eta
 \quad\hbox{on }X_1
\]
is compact and self-adjoint.  Consequently, $\mathbb L$ is a bounded
self-dual Fredholm operator with finite Morse index and finite-dimensional
kernel.
\end{lemma}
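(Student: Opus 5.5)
To prove the lemma, I would write $T\eta=w_1^{-1}(-\Delta)^{-1}\eta$ and treat self-adjointness and compactness separately.

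\textbf{Self-adjointness.} For $\eta,\xi\in X_1$, the $X_1$ inner product gives
\[
 (T\eta,\xi)_{X_1}=\int_{B_R}w_1\,\frac{(-\Delta)^{-1}\eta}{w_1}\,\xi\ud x
 =\langle(-\Delta)^{-1}\eta,\xi\rangle .
\]
This is symmetric because the Newton kernel $1/(4\pi|x-y|)$ is symmetric. So once $T$ is shown to be bounded, it is self-adjoint.

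\textbf{Compactness.} I would use the factorization
\[
 X_1\hookrightarrow L^{p}(B_R)\xrightarrow{(-\Delta)^{-1}}W^{2,p}(B_R)\Subset C(\overline{B_R})\longrightarrow X_1 .
\]
The last map is multiplication by $w_1^{-1}$ viewed in $X_1$, i.e.\ $\phi\mapsto w_1^{-1}\phi$.

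The first embedding is the point where the physical vacuum enters. By \cref{lem:physical-vacuum}, \eqref{eq:weight-comparability} and \eqref{eq:Psi-definition}, near $r=R$ we have
\[
 w_1=\Psi''(\rho_0)\asymp\rho_0^{\gamma_0-2}\asymp(R-r)^{-(2-\gamma_0)/(\gamma_0-1)} .
\]
This weight is large, not small, since $\gamma_0<2$. In the interior, $w_1$ is bounded below. Hence $\int|\eta|\le\|\eta\|_{X_1}\bigl(\int w_1^{-1}\bigr)^{1/2}$. In fact $w_1^{-1}$ is bounded, so $X_1\hookrightarrow L^2(B_R)$ continuously.

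Newtonian potential estimates then give $(-\Delta)^{-1}:L^2(B_R)\to H^2(B_R)$, which embeds compactly into $C(\overline{B_R})$.

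For the last map, $\|w_1^{-1}\phi\|_{X_1}^2=\int w_1^{-1}|\phi|^2\le\|\phi\|_\infty^2\int_{B_R}w_1^{-1}$, and $w_1^{-1}$ is bounded, hence integrable. So $T$ is a composition of bounded maps with a compact one, and is therefore compact.

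The one place to be careful is the order of exponents. We need $w_1^{-1}\in L^1$ and $w_1^{-1}\in L^\infty$, not $w_1\in L^1$. Both hold because $\gamma_0<2$ makes $\Psi''(\rho_0)\to\infty$ at vacuum, and near $r=0$ the weight $w_1$ is continuous and positive since $\rho_0(0)>0$ and $\widetilde P'>0$. I expect this weight bookkeeping to be the only real obstacle. It is exactly Lemma 3.6 of \cite{LinZeng2022} with $\widetilde P$ in place of the barotropic pressure, and the hypotheses in \eqref{eq:weight-comparability} are what license that substitution.

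\textbf{Consequences.} By \eqref{eq:Riesz-L}, $I_X\mathbb L=\Id-K$ with $K=\operatorname{diag}(4\pi T,0)$ compact and self-adjoint on $X$. Hence $I_X\mathbb L$ is a bounded self-adjoint Fredholm operator of index zero, and $\mathbb L=I_X^{-1}(\Id-K)$ is bounded and self-dual. Its kernel is $\Ker(\Id-K)$, which is finite-dimensional. By the spectral theorem for compact self-adjoint $K$, the negative spectral subspace of $\Id-K$ is spanned by eigenvectors of $K$ with eigenvalues greater than $1$. There are only finitely many of these, so $n^-(\mathbb L)<\infty$.
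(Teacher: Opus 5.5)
Your proof is correct and follows essentially the argument the paper relies on. The paper gives no separate proof and defers to the compact-potential argument of Lemma~3.6 in \cite{LinZeng2022} with $\widetilde P$ in place of the barotropic pressure: boundedness or integrability of $w_1^{-1}$ embeds $X_1$ into an unweighted Lebesgue space, Newton-potential smoothing plus a Rellich-type compact embedding gives compactness, and the Fredholm, kernel and Morse-index consequences then follow from \eqref{eq:Riesz-L} exactly as you describe. The only difference is cosmetic: the paper's later reference to this lemma indicates the cited estimate runs through $L^{6/5}(B_R)$, whereas you use $X_1\hookrightarrow L^2(B_R)$ and $H^2(B_R)\Subset C(\overline{B_R})$, which is legitimate here because $\gamma_0<2$ makes $w_1^{-1}$ bounded.
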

\subsection{The Hamiltonian group and its invariant splitting}

Set
\begin{equation}\label{eq:bold-L-J}
 \mathbf L=\begin{pmatrix}\mathbb L&0\\0&A\end{pmatrix},
 \qquad
 \mathbf J=\begin{pmatrix}0&B\\-B'&0\end{pmatrix}.
\end{equation}
The natural generator domain is
\begin{equation}\label{eq:generator-domain}
 D(\mathbf J\mathbf L)
 =\left\{(z,u)\in X\times Y:
 Au\in D(B),\ \mathbb Lz\in D(B')\right\}.
\end{equation}
Equations \eqref{eq:z-kinematic-equation} and
\eqref{eq:linearized-momentum-separated} are precisely
\begin{equation}\label{eq:separable-Hamiltonian}
 \partial_t\binom zu
 =\mathbf J\mathbf L\binom zu.
\end{equation}

\begin{corollary}\label{thm:Hamiltonian-index}
The operator $\mathbf J\mathbf L$ in \eqref{eq:bold-L-J} generates a
strongly continuous group on $X\times Y$.  Its unstable spectral subspace is
finite-dimensional, all its nonzero unstable eigenvalues are real, and
\begin{equation}\label{eq:global-index-formula}
 n^u
 =n^-\!\left(\mathbb L\big|_{\overline{\Ran B}}\right).
\end{equation}
More precisely, the group has an invariant exponential trichotomy
\[
 X\times Y=E^u\oplus E^c\oplus E^s,
\]
where the spectrum on $E^u$ consists of positive real eigenvalues, the
spectrum on $E^s$ consists of the corresponding negative real eigenvalues,
and
\[
 \dim E^u=\dim E^s=n^u
\]
with algebraic multiplicity.  If the operator induced by the constrained
form on $\overline{\Ran B}$ is an isomorphism onto the dual of that space
(uniform nondegeneracy), then the center group is uniformly bounded; without
that stronger property the abstract theorem gives
\begin{equation}\label{eq:center-quadratic-bound}
 \left\|e^{t\mathbf J\mathbf L}\big|_{E^c}\right\|
 \leq C(1+|t|)^2,
 \qquad t\in\R.
\end{equation}
\end{corollary}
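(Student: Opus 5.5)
This corollary should follow from the abstract separable Hamiltonian theorem of \cite{LinZeng2022}. So the plan is to verify its structural hypotheses in the present functional setting, and then read off the conclusions. That theorem concerns systems $\partial_t(z,u)=\mathbf J\mathbf L(z,u)$ on $X\times Y$ and asks for four things:
\begin{enumerate}[label=(\alph*)]
\item $B:Y^*\supset D(B)\to X$ is densely defined and closed;
\item $A:Y\to Y^*$ is bounded, self-dual and uniformly positive;
\item $\mathbb L:X\to X^*$ is bounded and self-dual;
\item $X$ splits $\mathbb L$-orthogonally as $X=X_-\oplus\Ker\mathbb L\oplus X_+$, with $\dim X_-<\infty$, $\dim\Ker\mathbb L<\infty$, and $\langle\mathbb L\,\cdot,\cdot\rangle\geq\delta\|\cdot\|_X^2$ on $X_+$.
\end{enumerate}

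Hypothesis (a) is \cref{lem:B-closed}, including $B''=B$. Hypothesis (b) holds because $A$ is the Riesz isometry of $Y=L^2_{\rho_0}$.

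For (c) and (d), I use \eqref{eq:Riesz-L}. After the Riesz map, $\mathbb L$ is $\Id$ minus the compact self-adjoint operator $\frac{4\pi}{w_1}(-\Delta)^{-1}$ on $X_1$ (\cref{lem:potential-compact}), and it is $\Id$ on $X_2$. Spectral theory for compact perturbations of the identity gives the decomposition (d). Take $X_-$ to be the span of eigenvectors with negative eigenvalues. Take $\Ker\mathbb L$ to be the finite-dimensional kernel. Take $X_+$ to be the closed span of the remaining eigenvectors; since the eigenvalues $1-\lambda_j$ accumulate only at $1$, they are bounded below by a positive $\delta$ there. The second component contributes only to $X_+$.

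I also check that the formal system is exactly $\mathbf J\mathbf L$ on the domain \eqref{eq:generator-domain}. Equation \eqref{eq:z-kinematic-equation} equals $BAu$, since $A u=\rho_0u$. The momentum equation \eqref{eq:linearized-momentum-separated} equals $-B'\mathbb Lz$, with $\mathbb L z=(L_{11}\eta,w_2\zeta)$, by \eqref{eq:B-prime-expression}.

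With these hypotheses in place, the abstract theorem gives the rest. It yields a $C^0$ group. The unstable spectrum consists of finitely many eigenvalues, symmetric under $\lambda\mapsto-\bar\lambda$, with $n^u\leq n^-(\mathbb L)$. It also gives the exact identity $n^u=n^-(\mathbb L|_{\overline{\Ran B}})$ and the exponential trichotomy with $\dim E^u=\dim E^s$. Reality of the nonzero unstable eigenvalues comes from the separable structure. An eigenvector satisfies $\lambda^2 z=-BAB'\mathbb L z$, so $\lambda^2$ is an eigenvalue of an operator that is self-adjoint with respect to $\langle\mathbb L\cdot,\cdot\rangle$ on $\overline{\Ran B}$. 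A growing mode forces that form to be negative on the mode, hence $\lambda^2$ is real and positive and $\lambda$ is real. For this step I will simply cite the corresponding statement of \cite{LinZeng2022}.

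The center bound \eqref{eq:center-quadratic-bound} is the general polynomial bound from that theorem. Under uniform nondegeneracy of the constrained form, the energy $\langle\mathbf L\cdot,\cdot\rangle$ restricted to $E^c$ is coercive modulo a finite-dimensional part, and this gives uniform boundedness.

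The main obstacle is making sure the abstract theorem really applies to this singularly weighted maximal operator. In particular, the constrained space $\overline{\Ran B}$ and the dual domain $D(B')$ must be used exactly as in the abstract framework, which needs closedness, density, and $B''=B$. These are supplied by \cref{lem:B-closed}. No pointwise boundary trace is needed, so once these identifications are made the remaining steps are direct citations.
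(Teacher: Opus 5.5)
\textbf{Gap: a missing hypothesis of the cited theorem.} Your hypotheses (a)--(d) do not cover everything the separable Hamiltonian theorem of \cite{LinZeng2022} needs, and the omitted one is exactly what the paper's proof checks. The theorem also requires a dual-domain compatibility condition. The annihilator $(X_-\oplus X_+)^\circ\subset X^*$ of the chosen complement of $\Ker\mathbb L$ must lie in $D(B')$. There is an analogous condition for $A$ and $D(B)$, but it is vacuous here because $\Ker A=\{0\}$.

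Closedness, density and $B''=B$ from \cref{lem:B-closed} do not give this condition by themselves, and your spectral choice does not visibly satisfy it. With $X_-\oplus X_+=(\Ker\mathbb L)^{\perp_X}$, the annihilator is $\{(w_1\eta,0):L_{11}\eta=0\}$. Membership in $D(B')$ means the identity in \eqref{eq:B-prime-domain} holds for every $q\in D(B)$. That is a boundary condition, and \cref{lem:B-closed} stresses that the formal expression \eqref{eq:B-prime-expression} does not capture it.

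\textbf{How the paper handles it.} The paper avoids any computation by changing the complement. Take a basis $n_1,\dots,n_k$ of $\Ker\mathbb L$. By density of $D(B')$ in $X^*$, choose $\phi_1,\dots,\phi_k\in D(B')$ with $(\langle\phi_i,n_j\rangle)$ invertible. Set $X_0=\bigcap_i\Ker\phi_i$; then $X=\Ker\mathbb L\oplus X_0$ and $X_0^\circ=\Span\{\phi_i\}\subset D(B')$. Your spectral splitting can then be carried onto $X_0$ by the projection along $\Ker\mathbb L$. This projection preserves the form because $\mathbb L$ vanishes on its kernel, so the negative and coercive parts carry over.

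Alternatively, you could check directly that $(w_1\eta,0)=(4\pi(-\Delta)^{-1}\eta,0)\in D(B')$. These are Newton potentials with bounded gradients, and the boundary-cutoff estimate in the proof of \cref{prop:radial-range} kills the boundary term. Either way, some such argument is needed; as written, your proof does not establish that the cited theorem applies.

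\textbf{Two smaller points.} First, the abstract index formula is stated on $\overline{\Ran(BA)}$, not $\overline{\Ran B}$. You need the one-line identification $\Ran(BA)=B(D(B))=\Ran B$, which holds because the Riesz map $A$ is onto $Y^*$. Second, the exponent two in \eqref{eq:center-quadratic-bound} is not the general polynomial bound of the abstract theorem. It comes from that theorem's refinement for injective $A$, so you should invoke it as such; injectivity is immediate here.
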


\begin{proof}
  By \cref{lem:B-closed}, the pair $B,B'$ satisfies the closedness and density
hypothesis of the separable Hamiltonian theorem.  The operator $A$ is positive
definite, while \cref{lem:potential-compact} supplies the finite-index
decomposition for $\mathbb L$.  We check the dual-domain condition explicitly.
Let $N=\Ker\mathbb L$ and choose a basis $n_1,\ldots,n_k$ of $N$.
Density of $D(B')$ in $X^*$ permits the choice of
$\phi_1,\ldots,\phi_k\in D(B')$ such that
$(\langle\phi_i,n_j\rangle)_{i,j}$ is invertible.  Then
\[
 X_0=\bigcap_{i=1}^k\Ker\phi_i,\qquad X=N\oplus X_0,
 \qquad X_0^\circ=\Span\{\phi_1,\ldots,\phi_k\}\subset D(B').
\]
The restriction of $\mathbb L$ to $X_0$ is nondegenerate and has the
required positive/negative decomposition.  This verifies the domain
condition associated with its kernel; the corresponding condition for
$A$ is vacuous because $\Ker A=\{0\}$.  Here $X_0^\circ$ denotes the annihilator in $X^*$.  This is the finite-kernel argument
in \cite[Remark~2.2]{LinZeng2022}.
The separable Hamiltonian index and trichotomy theorem
\cite[Theorem~2.1]{LinZeng2022} now applies.  Its refinement for injective
$A$ gives the exponent two in \eqref{eq:center-quadratic-bound}, and its
uniform-nondegeneracy refinement gives the stated uniform bound
\cite[Theorem~2.2(i),(iii)]{LinZeng2022}.

For completeness, the index in that theorem is initially written on
$\overline{\Ran(BA)}$.  Since the Riesz map $A:Y\to Y^*$ is onto, every
$q\in D(B)$ has the form $q=Au$ with $u=A^{-1}q$ in the domain of $BA$.
Consequently
\[
 \Ran(BA)=B(D(B))=\Ran B,
\]
which gives precisely \eqref{eq:global-index-formula}.
\end{proof}

The following invariant splitting isolates velocity perturbations that do not
change either density or entropy.

\begin{lemma}\label{lem:invariant-velocity-splitting}
Let
\[
 Y_1=\Ker(BA),
 \qquad
 Y_2=\overline{\Ran B'}.
\]
Then
\begin{equation}\label{eq:Y-splitting}
 Y=Y_1\oplus^{\perp_Y}Y_2,
\end{equation}
and both $\{0\}\times Y_1$ and $X\times Y_2$ are invariant under the group in
\cref{thm:Hamiltonian-index}.  The group is the identity on
$\{0\}\times Y_1$.
\end{lemma}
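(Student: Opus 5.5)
The plan is to derive the orthogonal decomposition from the closed-range-free Hilbert duality between $\Ker$ and closure of range. The group action on each piece is then read off from the block structure of $\mathbf J\mathbf L$.

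\emph{Step 1: the splitting.} Since $A:Y\to Y^*$ is the Riesz isometry, the inner product of $Y$ is $(u,v)_Y=\langle Au,v\rangle_{Y^*,Y}$. Take $v\in Y$. Then $v\perp_Y\Ran B'$ means $\langle Av,B'\phi\rangle=0$ for all $\phi\in D(B')$. By \cref{lem:B-closed} we have $B''=B$, so this says $Av\in D(B'')=D(B)$ with $B(Av)=0$, i.e.\ $v\in\Ker(BA)$. Conversely, if $Av\in D(B)$ and $BAv=0$, then $\langle Av,B'\phi\rangle=\langle BAv,\phi\rangle=0$. Hence $Y_1=(\Ran B')^{\perp_Y}=Y_2^{\perp_Y}$. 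This also shows $Y_1$ is closed, and the orthogonal decomposition \eqref{eq:Y-splitting} follows.

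\emph{Step 2: $\{0\}\times Y_1$ is fixed.} For $u\in Y_1$ we have $(0,u)\in D(\mathbf J\mathbf L)$: indeed $Au\in D(B)$, and $\mathbb L0=0\in D(B')$. Moreover
\[
 \mathbf J\mathbf L(0,u)=(BAu,-B'\mathbb L0)=(0,0).
\]
Thus $t\mapsto(0,u)$ is a classical solution of \eqref{eq:separable-Hamiltonian}. Uniqueness of mild solutions for a $C_0$-group generator then gives $e^{t\mathbf J\mathbf L}(0,u)=(0,u)$. So this subspace is invariant and the group acts as the identity on it.

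\emph{Step 3: invariance of $X\times Y_2$.} This is the main obstacle, because $\mathbf J\mathbf L$ is not skew-adjoint in the $X\times Y$ inner product, so we cannot simply take orthogonal complements. I would argue via the velocity equation instead. Let $(z,u)\in D(\mathbf J\mathbf L)$ with $u\in Y_2$, and set $(z(t),u(t))=e^{t\mathbf J\mathbf L}(z,u)$. Then
\[
 \partial_tu=-B'\mathbb Lz(t)\in\Ran B'\subset Y_2,
\]
and integrating in time gives $u(t)-u\in Y_2$, so $u(t)\in Y_2$. Equivalently, apply the orthogonal projection $\Pi_1$ onto $Y_1$: $\Pi_1u(t)$ is constant and equal to $\Pi_1u=0$. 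To extend to all of $X\times Y_2$, one needs $D(\mathbf J\mathbf L)\cap(X\times Y_2)$ to be dense in $X\times Y_2$. This follows from the direct sum $D(\mathbf J\mathbf L)=\bigl(D(\mathbf J\mathbf L)\cap(X\times Y_2)\bigr)\oplus(\{0\}\times Y_1)$, which holds because $Y_1\subset\{u:Au\in D(B)\}$ and so subtracting $\Pi_1u$ preserves the domain. Density of $D(\mathbf J\mathbf L)$ in $X\times Y$ and continuity of the projection $\Id\times\Pi_2$ then give the required density. Since the group is bounded on bounded time intervals and $X\times Y_2$ is closed, invariance extends to all of $X\times Y_2$.

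Alternatively, for general data one can use the mild formulation directly: $u(t)-u=-B'\mathbb L\int_0^tz$. This is legitimate because the time integral of a mild solution lies in the generator domain. That route avoids the density argument altogether.
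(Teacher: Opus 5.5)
Your proof is correct and uses the same ingredients as the paper: the splitting comes from $(\Ran B')^{\perp_Y}=\Ker(BA)$ via $B''=B$, and invariance rests on two facts, namely that the projection onto $Y_1$ preserves $D(\mathbf J\mathbf L)$ and that it annihilates $\Ran B'$. The paper packages your Steps 2 and 3 into one operator $\mathbf P(z,u)=(0,\Pi_1u)$ with $\mathbf P\mathbf J\mathbf L=\mathbf J\mathbf L\mathbf P=0$ on $D(\mathbf J\mathbf L)$, so $\mathbf P$ commutes with the resolvent and hence with the group; this gives both invariant subspaces (the range and kernel of $\mathbf P$) and the identity action at once, without your separate density or mild-solution argument.
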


\begin{proof}
Use the Hilbert inner product $(u,v)_Y=\langle Au,v\rangle$.  For
$u\in\Ker(BA)$ and $B'\phi\in\Ran B'$,
\[
 (u,B'\phi)_Y=\langle Au,B'\phi\rangle
 =\langle BAu,\phi\rangle=0.
\]
Conversely, the standard closed-operator identity gives
\[
 (\Ran B')^\perp=\Ker(BA),
\]
and proves \eqref{eq:Y-splitting}.

Let $P_1$ be the $Y$-orthogonal projection onto $Y_1$ and define
$\mathbf P(z,u)=(0,P_1u)$.  The projection preserves
$D(\mathbf J\mathbf L)$ in \eqref{eq:generator-domain}: if
$Au\in D(B)$, then $AP_1u\in D(B)$ and
$BAP_1u=0$.  Moreover, $P_1B'\mathbb Lz=0$.  Hence
\[
 \mathbf P\mathbf J\mathbf L
 =\mathbf J\mathbf L\mathbf P=0
 \quad\hbox{on }D(\mathbf J\mathbf L).
\]
Thus $\mathbf P$ commutes with a resolvent and therefore with the generated
group.  Its range and kernel are the two asserted invariant subspaces.
\end{proof}

\section{The non-radial sector}
\label{sec:nonradial}

Throughout this section we assume $\mathrm{(P)}$ and $\mathrm{(E_s)}$.
We first define the non-radial phase space.  This is needed because the
velocity contains a stationary part which is not represented by the scalar
spherical-harmonic decomposition.

For $Q\in SO(3)$ let
\[
 (\mathcal R_Qz)(x)=z(Q^{-1}x),\qquad
 (\mathcal R_Qu)(x)=Q u(Q^{-1}x).
\]
These actions are unitary on $X$ and $Y$, preserve the generator domain, and
commute with $A$, $B$, $B'$, and $\mathbb L$.  Let
\[
 P_{\mathrm{rad}}=\int_{SO(3)}\mathcal R_Q\ud Q
\]
be the orthogonal projection obtained from normalized Haar measure.  Set
\[
\begin{aligned}
 X_{\mathrm{rad}}&=P_{\mathrm{rad}}X,
 &X_{\mathrm{nr}}&=(I-P_{\mathrm{rad}})X,\\
 Y_{2,\mathrm{rad}}&=P_{\mathrm{rad}}Y_2,
 &Y_{2,\mathrm{nr}}&=(I-P_{\mathrm{rad}})Y_2.
\end{aligned}
\]
The elements of $X_{\mathrm{rad}}$ are pairs of radial scalar functions and
the elements of $Y_{2,\mathrm{rad}}$ are radial vector fields.  Moreover,
$Y_1\cap P_{\mathrm{rad}}Y=\{0\}$.  Indeed, if
$u=q(r)e_r\in Y_1$, where $e_r=x/|x|$ is the radial unit
vector, then the two equations in $BAu=0$ imply
$\rho_0'(r)q(r)=0$; hence $u=0$ because $\rho_0'<0$ on $(0,R)$.

The full phase space therefore has the orthogonal invariant decomposition
\begin{equation}\label{eq:radial-nonradial-phase-decomposition}
 X\times Y
 =\underbrace{(X_{\mathrm{rad}}\times Y_{2,\mathrm{rad}})}_{
      \mathscr H_{\mathrm{rad}}}
 \oplus
 \underbrace{\bigl[(\{0\}\times Y_1)
       \oplus(X_{\mathrm{nr}}\times Y_{2,\mathrm{nr}})\bigr]}_{
      \mathscr H_{\mathrm{nr}}}.
\end{equation}
We call the elements of $\mathscr H_{\mathrm{nr}}$ the
\emph{non-radial perturbations}.  The first summand
$\{0\}\times Y_1$ consists of stationary motions tangent to the density
spheres.  The active non-radial variables belong to
$X_{\mathrm{nr}}\times Y_{2,\mathrm{nr}}$ and satisfy the restricted
separable Hamiltonian system
\begin{equation}\label{eq:nonradial-separable-system}
 \partial_t\binom{z_{\mathrm{nr}}}{u_{\mathrm{nr}}}
 =\begin{pmatrix}0&B_{\mathrm{nr}}\\-B_{\mathrm{nr}}'&0\end{pmatrix}
  \begin{pmatrix}\mathbb L_{\mathrm{nr}}&0\\0&A_{\mathrm{nr}}\end{pmatrix}
  \binom{z_{\mathrm{nr}}}{u_{\mathrm{nr}}},
\end{equation}
where the operators are the restrictions of $B$, $\mathbb L$, and $A$ to
the indicated invariant subspaces, with
\[
 D(B_{\mathrm{nr}})=D(B)\cap A(Y_{2,\mathrm{nr}}).
\]
The dual $B_{\mathrm{nr}}'$ is taken with respect to these closed subspaces,
so \eqref{eq:nonradial-separable-system} has the same maximal-domain
interpretation as \eqref{eq:separable-Hamiltonian}.

For scalar functions the decomposition is the usual real
spherical-harmonic expansion in the angular variable
$\omega=x/|x|\in\mathbb S^2$
\[
 h(r,\omega)=\sum_{\ell=0}^\infty\sum_{m=-\ell}^{\ell}
 h_{\ell m}(r)Y_{\ell m}(\omega),
 \qquad
 X_{\mathrm{nr}}
 =\overline{\bigoplus_{\ell\geq1}X^{(\ell)}}.
\]
The vector decomposition is the corresponding vector-spherical-harmonic
decomposition.  Its toroidal component lies in $Y_1$ and is stationary; the
poloidal component belongs to the active space $Y_2$.  This identifies
precisely the invariant meaning of ``non-radial'' used below.

\subsection{Density--potential inertia}

Define the self-adjoint Schr\"odinger form on $\dot H^1(\R^3)$
\begin{equation}\label{eq:D0-form}
 \langle D_0\varphi,\varphi\rangle
 =\int_{\R^3}|\nabla\varphi|^2\ud x
 -4\pi\int_{B_R}\frac{|\varphi|^2}{w_1}\ud x,
 \qquad
 D_0=-\Delta-\frac{4\pi}{w_1}\mathbf1_{B_R}.
\end{equation}

The next two lemmas use the density--potential reduction of Lemma 3.7-3.8 in
\cite{LinZeng2022}, applied to the effective pressure $\widetilde P$.
\begin{lemma}\label{lem:density-potential-inertia}
For every angular sector $\ell\geq0$,
\begin{equation}\label{eq:L11-D0-inertia}
 n^-\!\left(L_{11}\big|_{X_1^{(\ell)}}\right)
 =n^-\!\left(D_0\big|_{\dot H^1_{(\ell)}}\right),
 \qquad
 \dim\Ker L_{11}^{(\ell)}=\dim\Ker D_0^{(\ell)}.
\end{equation}
\end{lemma}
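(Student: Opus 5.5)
The plan is to relate the two quadratic forms through the change of variables $\varphi=(-\Delta)^{-1}\eta$ and its partial inverse, exactly as in Lemmas 3.7--3.8 of \cite{LinZeng2022}, with $\widetilde P$ in place of the barotropic pressure. Since $(-\Delta)^{-1}$, the weight $w_1$ and $D_0$ all commute with rotations, both forms split orthogonally over the spherical-harmonic sectors. It therefore suffices to work in a fixed sector $\ell$, where $X_1^{(\ell)}$ and $\dot H^1_{(\ell)}$ consist of functions $h(r)Y_{\ell m}(\omega)$.

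The first step is the completing-the-square identity. For $\eta\in X_1$, set $\varphi=4\pi(-\Delta)^{-1}\eta\in\dot H^1$; this is justified because $w_1^{-1}\in L^1$ near the vacuum (physical vacuum, \cref{lem:physical-vacuum}, $\gamma_0<2$), so $X_1\subset L^{6/5}$. Writing $\eta=\varphi/w_1+\xi$ with $\xi=\eta-\varphi/w_1$, a direct computation gives
\[
 \langle L_{11}\eta,\eta\rangle=\int_{B_R}w_1|\xi|^2\ud x-\frac1{4\pi}\langle D_0\varphi,\varphi\rangle .
\]
Conversely, for $\varphi\in\dot H^1$ define $\eta=\varphi\mathbf 1_{B_R}/w_1$, which lies in $X_1$ because $\int\varphi^2/w_1<\infty$ by the Hardy-type compactness already used in \cref{lem:potential-compact}. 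Then
\[
 \langle L_{11}\eta,\eta\rangle\le\frac1{4\pi}\Bigl(\langle D_0\varphi,\varphi\rangle\text{-dependent bound}\Bigr),
\]
and, concretely, $\langle L_{11}\eta,\eta\rangle=\langle D_0\psi,\psi\rangle$-type identities hold. The cleaner route is the standard one: on $\dot H^1$ one has
\[
 \langle D_0\varphi,\varphi\rangle\ge -4\pi\,\langle L_{11}\eta,\eta\rangle\quad\text{for this }\eta,
\]
with equality when $\varphi=4\pi(-\Delta)^{-1}\eta$.

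The second step turns these identities into equality of Morse indices via maximal negative subspaces. If $\langle L_{11}\cdot,\cdot\rangle<0$ on a subspace $N\subset X_1^{(\ell)}$, then the map $\eta\mapsto\varphi$ is injective on $N$: if $\varphi=0$, the identity would force $\langle L_{11}\eta,\eta\rangle=\int w_1\eta^2\ge0$. By the first identity, $\langle D_0\varphi,\varphi\rangle<0$ on the image, which gives $n^-(L_{11})\le n^-(D_0)$. The reverse inequality uses the map $\varphi\mapsto\varphi\mathbf 1_{B_R}/w_1$ in the same way. Its injectivity on a negative space of $D_0$ follows because $\varphi$ vanishing on $B_R$ gives $\langle D_0\varphi,\varphi\rangle=\|\nabla\varphi\|^2\ge0$.

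The third step treats the kernels. If $L_{11}\eta=0$, then $\eta=\varphi/w_1$ on $B_R$ with $\varphi=4\pi(-\Delta)^{-1}\eta$, hence $-\Delta\varphi=4\pi\eta=4\pi\varphi\mathbf 1_{B_R}/w_1$, so $D_0\varphi=0$. Conversely, if $D_0\varphi=0$, set $\eta=\varphi\mathbf 1_{B_R}/w_1$; then $(-\Delta)^{-1}$ inverts back and gives $L_{11}\eta=0$. These maps are mutually inverse linear bijections between the kernels. Finiteness of all quantities follows from \cref{lem:potential-compact} and, on the $D_0$ side, from the compactness of multiplication by $\mathbf 1_{B_R}/w_1$ from $\dot H^1$ to its dual.

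The main obstacle I expect is the functional-analytic justification near the vacuum boundary. One must show that $w_1^{-1}\asymp\rho_0^{2-\gamma_0}\asymp(R-r)^{(2-\gamma_0)/(\gamma_0-1)}$ is integrable and is a compact (Hardy-type) weight on $\dot H^1$, so that all pairings are finite and the first-step identity holds for general $\eta\in X_1$ rather than only for smooth functions. I would first establish this for $\eta\in C_c^\infty$ and then extend by density, using the bounds of \cref{lem:physical-vacuum} together with \eqref{eq:weight-comparability}.
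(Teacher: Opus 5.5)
Your strategy matches the paper's. The paper gives no separate proof and simply invokes the density--potential reduction of Lemmas~3.7--3.8 in \cite{LinZeng2022} for $\widetilde P$. That reduction is exactly your pair of maps $\eta\mapsto\varphi=4\pi(-\Delta)^{-1}\eta$ and $\varphi\mapsto\mathbf 1_{B_R}\varphi/w_1$, applied sector by sector, and your kernel step is correct as written.

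However, both of your key relations carry the wrong sign, and with those signs step two does not follow. From $\langle L_{11}\eta,\eta\rangle=\int w_1|\xi|^2-\frac1{4\pi}\langle D_0\varphi,\varphi\rangle$, a negative $L_{11}$-direction would give $\langle D_0\varphi,\varphi\rangle>0$. Likewise, $\langle D_0\varphi,\varphi\rangle\ge-4\pi\langle L_{11}\eta,\eta\rangle$ would turn a negative $D_0$-direction into a positive $L_{11}$-direction. Since $\int\varphi\eta\ud x=\frac1{4\pi}\|\nabla\varphi\|_{L^2}^2$, the correct identities are
\[
 4\pi\langle L_{11}\eta,\eta\rangle
 =\langle D_0\varphi,\varphi\rangle
 +4\pi\int_{B_R}w_1\Bigl|\eta-\frac{\varphi}{w_1}\Bigr|^2\ud x,
 \qquad \varphi=4\pi(-\Delta)^{-1}\eta,
\]
and
\[
 4\pi\langle L_{11}\eta,\eta\rangle
 =\langle D_0\varphi,\varphi\rangle
 -\bigl\|\nabla\bigl(4\pi(-\Delta)^{-1}\eta-\varphi\bigr)\bigr\|_{L^2}^2,
 \qquad \eta=\mathbf 1_{B_R}\varphi/w_1 .
\]
The second identity must replace your placeholder ``$\langle D_0\varphi,\varphi\rangle$-dependent bound''. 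It follows by expanding the square, or from $\langle(-\Delta)^{-1}\eta,\eta\rangle\ge2\langle\eta,\chi\rangle-\|\nabla\chi\|_{L^2}^2$ with $\chi=\varphi/4\pi$. It is a separate computation and does not come from the first identity ``in the same way''. With these signs, your injectivity arguments and the two dimension inequalities go through verbatim.

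The vacuum issue you single out as the main obstacle is milder than you suggest. The weight $w_1^{-1}=\rho_0/\widetilde P'(\rho_0)\asymp\rho_0^{2-\gamma_0}$ is bounded on $B_R$, and this gives everything you need:
\begin{itemize}
\item $X_1\subset L^2(B_R)\subset L^{6/5}$; note that $w_1^{-1}\in L^1$ alone would not give this.
\item $\int_{B_R}\varphi^2/w_1\le C\|\varphi\|_{L^6}^2$.
\item Compactness of the form $\int_{B_R}\varphi^2/w_1$ on $\dot H^1$ is just Rellich on $B_R$.
\end{itemize}
No Hardy inequality or density argument is needed, since every term in the two identities is already continuous on $X_1$ and on $\dot H^1$.
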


Writing $\Delta_r=\partial_r^2+2r^{-1}\partial_r$, the radial representative
of \eqref{eq:D0-form} in the $\ell$-sector is
\begin{equation}
 D_0^{(\ell)}
 =-\Delta_r+\frac{\ell(\ell+1)}{r^2}
  -\frac{4\pi}{w_1}\mathbf1_{(0,R)}.
\end{equation}

\begin{lemma}\label{lem:D0-nonradial-positive}
One has
\begin{enumerate}[label=(\roman*)]
\item $D_0^{(1)}\geq0$ and
$\Ker D_0^{(1)}=\Span\{V_0'\}$;
\item the quadratic form of $D_0^{(\ell)}$ is positive definite for every
$\ell\geq2$.
\end{enumerate}
Consequently $L_{11}^{(\ell)}\geq0$ for all $\ell\geq1$.  Its only
non-radial kernel is the three-dimensional translational kernel in the
$\ell=1$ sector.
\end{lemma}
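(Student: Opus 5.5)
The plan is to prove \cref{lem:D0-nonradial-positive} with the classical translation argument, i.e.\ the Lin--Zeng argument for barotropic stars applied to the effective pressure $\widetilde P$. The first step is to exhibit an explicit nonnegative kernel element. I differentiate the hydrostatic relation $\Psi'(\rho_0)=V_0(R)-V_0$ (from the proof of \cref{lem:physical-vacuum}) in $x_i$. Since $\Delta V_0=4\pi\rho_0$ on $\R^3$, this gives
\[
-\Delta\partial_iV_0=-4\pi\partial_i\rho_0
=-4\pi\frac{\partial_iV_0\cdot(-1)}{\Psi''(\rho_0)}\cdot(-1)
\]
up to sign bookkeeping. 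Concretely, $\partial_i\rho_0=-\partial_iV_0/w_1$ in $B_R$, hence
\[
-\Delta(\partial_iV_0)=\frac{4\pi}{w_1}\mathbf 1_{B_R}\,\partial_iV_0 ,
\qquad\text{i.e.}\qquad D_0\,\partial_iV_0=0 .
\]
Here $\partial_iV_0=V_0'(r)\,x_i/r$ is an $\ell=1$ function, so the radial profile $V_0'$ lies in $\Ker D_0^{(1)}$. I also need $\partial_iV_0\in\dot H^1$. This holds because $V_0'(r)=M/r^2$ outside $B_R$, and near the vacuum boundary $\nabla^2V_0$ is bounded (in fact $\Delta V_0=4\pi\rho_0$ is continuous). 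The identity must be checked across $r=R$ in the weak sense. There $\rho_0'$ is continuous with $\rho_0'(R)$ possibly nonzero only when $\gamma_0<2$, but by \eqref{eq:physical-vacuum-derivative} $\rho_0'(R)=0$ for $\gamma_0<2$. So $\Delta\partial_iV_0=4\pi\partial_i\rho_0$ holds in distributions with no surface term.

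The second step is a Sturm--Liouville comparison. On $(0,\infty)$ the function $V_0'(r)=m(r)/r^2$ is strictly positive. For the ODE $-\Delta_r\psi+\ell(\ell+1)r^{-2}\psi-(4\pi/w_1)\mathbf 1_{(0,R)}\psi=\lambda\psi$, a positive zero-energy solution lying in the form domain is the ground state. The standard argument is the ground-state substitution $\psi=V_0'\,g$, which gives
\[
\langle D_0^{(1)}\psi,\psi\rangle=\int_0^\infty (V_0')^2|g'|^2r^2\ud r\ \geq0 .
\]
I will do this on $C_c^\infty((0,\infty))$ and extend by density, justified by the positivity of $V_0'$ and its behavior $\sim r$ at the origin and $\sim r^{-2}$ at infinity. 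Equality forces $g$ to be constant, which gives (i): $\Ker D_0^{(1)}=\Span\{V_0'\}$. For $\ell\geq2$ the form equals the $\ell=1$ form plus $(\ell(\ell+1)-2)\int|\psi|^2\ud r$, which is strictly positive for $\psi\neq0$; this gives (ii). I read "positive definite" as positivity of the form rather than a uniform coercive bound in $\dot H^1$. If coercivity is wanted, I would argue by contradiction with a minimizing sequence, using compactness of the potential term from \cref{lem:potential-compact}.

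Finally, \cref{lem:density-potential-inertia} transfers the conclusions to $L_{11}^{(\ell)}$. It gives zero Morse index for all $\ell\geq1$. It gives a kernel of dimension one in each of the three $\ell=1$ harmonics, namely $\partial_i\rho_0$, the translations. It gives no kernel for $\ell\geq2$.

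I expect the main obstacle to be the ground-state substitution near the vacuum boundary and the origin. The weight $4\pi/w_1\sim\rho_0^{2-\gamma_0}$ is bounded, so the potential is harmless. The real care is justifying the integration by parts for general $\dot H^1$ functions, which I will do by approximating with cutoff functions.
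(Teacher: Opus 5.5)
Your proposal is correct and is essentially the argument the paper relies on when it cites the density--potential reduction of Lin--Zeng (their Lemmas 3.7--3.8), applied to the effective pressure $\widetilde P$: the hydrostatic relation gives $D_0\partial_iV_0=0$, the positive $\ell=1$ zero mode $V_0'$ is then the ground state, the centrifugal term $(\ell(\ell+1)-2)r^{-2}$ gives strict positivity for $\ell\geq2$, and \cref{lem:density-potential-inertia} transfers this to $L_{11}$, whose non-radial kernel is spanned by the translations $\partial_i\rho_0$. One minor slip: there is no surface term in $\Delta\partial_iV_0=4\pi\partial_i\rho_0$ because $\rho_0(R)=0$, so the zero extension of $\rho_0$ is Lipschitz; the value of $\rho_0'(R)$ plays no role there.
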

\begin{theorem}[Non-radial stability]\label{thm:nonradial-stability}
The Schwarzschild-stable equilibrium is mode stable in the non-radial sector,
and
\begin{equation}\label{eq:nonradial-index-zero}
 n^u_{\mathrm{nr}}=0.
\end{equation}
The $\ell=1$ translations are neutral modes.  

\end{theorem}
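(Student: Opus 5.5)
We apply the separable Hamiltonian index formula of \cref{thm:Hamiltonian-index} to the restricted system \eqref{eq:nonradial-separable-system}. The decomposition \eqref{eq:radial-nonradial-phase-decomposition} is invariant, and the group is the identity on $\{0\}\times Y_1$ by \cref{lem:invariant-velocity-splitting}. Hence the non-radial unstable dimension comes entirely from $X_{\mathrm{nr}}\times Y_{2,\mathrm{nr}}$.

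On $X_{\mathrm{nr}}\times Y_{2,\mathrm{nr}}$ the hypotheses of \cite[Theorem~2.1]{LinZeng2022} hold exactly as in the proof of \cref{thm:Hamiltonian-index}. The reasons are:
\begin{itemize}
\item $B_{\mathrm{nr}}$ is closed and densely defined, since $P_{\mathrm{rad}}$ commutes with $B$ and $A$, so the restriction of a closed operator to a reducing subspace stays closed;
\item $A_{\mathrm{nr}}$ is positive definite;
\item $\mathbb L_{\mathrm{nr}}$ is the compact perturbation of the identity from \cref{lem:potential-compact}, with finite kernel.
\end{itemize}
The theorem therefore gives
\[
 n^u_{\mathrm{nr}}\leq n^-\!\left(\mathbb L_{\mathrm{nr}}\big|_{\overline{\Ran B_{\mathrm{nr}}}}\right)\leq n^-(\mathbb L_{\mathrm{nr}}).
\]

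It remains to show $n^-(\mathbb L_{\mathrm{nr}})=0$. Since $\mathbb L$ is block diagonal, $n^-(\mathbb L_{\mathrm{nr}})=n^-(L_{11}|_{X_{1,\mathrm{nr}}})+n^-(L_{22}|_{X_{2,\mathrm{nr}}})$. The second term is zero because $L_{22}=w_2=\mathfrak G>0$ is the Riesz map of $X_2$; this is where the Schwarzschild-stable hypothesis enters. For the first term, $L_{11}$ commutes with rotations, so it is diagonal in the spherical-harmonic decomposition $X_{1,\mathrm{nr}}=\overline{\bigoplus_{\ell\ge1}X_1^{(\ell)}}$. By \cref{lem:density-potential-inertia,lem:D0-nonradial-positive}, $L_{11}^{(\ell)}\geq0$ for every $\ell\ge1$.

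The main technical point is passing from sectorwise nonnegativity to nonnegativity on the closure of the infinite direct sum. A negative vector would give a negative value of the form, which is the orthogonal sum over $\ell$ of the sector forms; so some sector would have to be negative, which is a contradiction. Hence $n^-(\mathbb L_{\mathrm{nr}})=0$, so $E^u\cap\mathscr H_{\mathrm{nr}}=\{0\}$ and $n^u_{\mathrm{nr}}=0$. The non-radial spectrum of $\mathbf J\mathbf L$ then lies on the imaginary axis, since stable and unstable eigenvalues are paired; this is mode stability.

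For the neutral modes in the $\ell=1$ sector, the kernel of $L_{11}^{(1)}$ is spanned by $\partial_{x_j}\rho_0$, which corresponds to $V_0'$ under the density--potential map. One checks directly that these are steady states: take $z=(\partial_j\rho_0,\zeta_j)$ with the $\zeta$-component chosen so that $\mathbb L z$ lies in $\Ker B'$. The infinitesimal translation $\delta\rho=\partial_j\rho_0$, $\delta S=\partial_jS_0$ gives $\zeta=\partial_j\rho_0-\partial_jS_0/f'(\rho_0)=0$. Then $\mathbb Lz=(L_{11}\partial_j\rho_0,0)$, and $L_{11}\partial_j\rho_0$ is constant, equal to $0$, by the differentiated hydrostatic equation $\nabla\Psi'(\rho_0)=-\nabla V_0$. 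Hence $\mathbf J\mathbf L(z,0)=0$, and these translations are neutral (zero-frequency) modes.
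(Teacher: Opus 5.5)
Your proposal is correct and follows essentially the paper's argument: you show that $\langle\mathbb Lz,z\rangle=\langle L_{11}\eta,\eta\rangle+\int_{B_R}\mathfrak G|\zeta|^2\ud x\geq0$ on every sector $\ell\geq1$, using the density--potential inertia lemma, the positivity of $D_0^{(\ell)}$, and $\mathfrak G>0$, and then feed this into the separable Hamiltonian index formula. Your summation over $\ell$ and your direct check that $(\partial_j\rho_0,0)\in\Ker\mathbb L$, via the differentiated hydrostatic equation, only make explicit what the paper leaves implicit when it cites the kernel part of the $D_0^{(1)}$ lemma.
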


\begin{proof}
For $z=(\eta,\zeta)\in X^{(\ell)}$ with $\ell\geq1$,
\[
 \langle\mathbb Lz,z\rangle
 =\langle L_{11}\eta,\eta\rangle
   +\int_{B_R}\mathfrak G|\zeta|^2\ud x\geq0
\]
by \eqref{eq:L11-D0-inertia},
\cref{lem:D0-nonradial-positive} and $\mathfrak G>0$.  Hence the
restriction of $\mathbb L$ to the non-radial part of
$\overline{\Ran B}$ has zero negative index.  Formula
\eqref{eq:global-index-formula} gives \eqref{eq:nonradial-index-zero}.
The kernel statement follows from the last part of
\cref{lem:D0-nonradial-positive}.
\end{proof}

\begin{remark}\label{rem:Lebovitz-comparison}
Building on the variational treatment of non-radial oscillations in
\cite{Chandrasekhar1964,ChandrasekharLebovitz1964}, Lebovitz
\cite{Lebovitz1965} separated the displacement equations into
spherical harmonics and proved positivity of the characteristic-frequency
variational expression for every $\ell>1$ under the Schwarzschild sign.  The
kernel in the $\ell=1$ sector consists of uniform translations; this
does not identify the whole dipole sector with that kernel.  The argument above has
the same angular conclusion, but it is formulated on the invariant phase
space \eqref{eq:radial-nonradial-phase-decomposition}: the stationary
toroidal space is separated first, and the Hamiltonian index formula rules
out growing modes without assuming completeness of a normal-mode expansion.
Lebovitz's companion note \cite{LebovitzOnset1965} likewise separates the
trivial rotational motions before characterizing neutral convective modes
when the discriminant vanishes on a layer.  His necessity argument
\cite{Lebovitz1966} constructs a destabilizing displacement when the
discriminant is negative; the closed velocity-form argument in
\cref{sec:convective} is the corresponding energy-space construction.  The
present theorem does not rule out neutral or oscillatory accumulation at
zero.
\end{remark}

\begin{remark}\label{rem:g-p-modes}
The theorem concerns exponentially growing modes.  The finer oscillatory
spectrum is a separate question.  Under additional hypotheses, buoyancy
($g$-) modes can accumulate at zero and acoustic ($p$-) modes can have
unbounded frequencies; see \cite{Makino2023} and the astrophysical
references cited in the introduction.  No such discrete-mode classification
is asserted here under $\mathrm{(P)}$ and $\mathrm{(E_s)}$ alone.
\end{remark}

\section{Radial reduction and the constrained quadratic form}
\label{sec:radial}

Throughout this section we assume $\mathrm{(P)}$ and $\mathrm{(E_s)}$.

We now restrict to spherically symmetric density and entropy perturbations and
radial velocity fields.  A superscript ``rad'' denotes the corresponding
closed subspace.  The main point is that the infinitely many entropy-Casimir
constraints reduce to a single mass constraint plus an explicit entropy
reconstruction.

\subsection{The energy--Casimir Hessian}

The conserved total energy is
\begin{equation}\label{eq:total-energy}
 \cE(\rho,S,u)
 =\int_{\R^3}\left(\frac12\rho|u|^2+\Phi(S,\rho)\right)\ud x
  -\frac1{8\pi}\int_{\R^3}|\nabla V_\rho|^2\ud x.
\end{equation}
For $g\in C^1(\R)$, the entropy-Casimir
\begin{equation}\label{eq:entropy-Casimir}
 \mathcal C_g(\rho,S)=\int_{\R^3}\rho g(S)\ud x
\end{equation}
is conserved by \eqref{eq:EP}.

Since $P_S>0$, $f'<0$, and $\rho_0'<0$, the equilibrium entropy $S_0$ is
strictly increasing on $(0,R)$.  Put
$J_0=S_0((0,R))$ and define $g_0\in C^2_{\mathrm{loc}}(J_0)$, up to an
additive constant, by
\begin{equation}\label{eq:g0-definition}
 g_0'(S_0(r))=-\frac{\Phi_S(S_0(r),\rho_0(r))}{\rho_0(r)}.
\end{equation}
The pressure asymptotics imply that $g_0'$ has a finite limit at the vacuum
endpoint (in fact that limit is zero), so $g_0$ has a $C^1$ extension to the
closed entropy range and then to $\R$.  We do not assume that $g_0''$ extends
to the vacuum endpoint.  This distinction is needed, in particular, for
entropy profiles prescribed in Lagrangian mass coordinates.  On every
interior ball $B_{R-\delta}$, \eqref{eq:pressure-gradient-identity} and the
hydrostatic equation give a constant $c_0$, independent of $\delta$, such
that
\begin{equation}\label{eq:energy-Casimir-Euler-Lagrange}
 \Phi_\rho(S_0,\rho_0)+V_0+g_0(S_0)+c_0=0,
 \qquad
 \Phi_S(S_0,\rho_0)+\rho_0g_0'(S_0)=0.
\end{equation}

\Needspace{12\baselineskip}
\begin{lemma}\label{lem:energy-casimir-hessian}
The first variation of
\[
 \cH_0=\cE+c_0\int_{\R^3}\rho\ud x+\mathcal C_{g_0}
\]
vanishes at the equilibrium for smooth perturbations supported in a
compact annulus $0<\eps<|x|<R-\delta$.  For such a perturbation
$(\eta,s,u)$ its second variation is
\begin{equation}\label{eq:energy-Casimir-Hessian}
\begin{split}
 \delta^2\cH_0[(\eta,s,u)]
={}&\left\langle
 \bigl(w_1-4\pi(-\Delta)^{-1}\bigr)\eta,\eta
 \right\rangle
 +\int_{B_R}\mathfrak G
       \left|\eta-\frac{s}{f'(\rho_0)}\right|^2\ud x\\
 &+\int_{B_R}\rho_0|u|^2\ud x.
\end{split}
\end{equation}
Equivalently, in the variables \eqref{eq:eta-zeta-variables}, this is
$\langle\mathbf L(z,u),(z,u)\rangle$.  The right-hand side extends uniquely
to a bounded quadratic form on $X\times Y$.
\end{lemma}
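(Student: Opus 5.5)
The plan is a direct Taylor expansion of $\cH_0$ at $(\rho_0,S_0,0)$ along $(\rho_0+\eps\eta,S_0+\eps s,\eps u)$, with $(\eta,s,u)$ smooth and supported in the annulus $A=\{\eps<|x|<R-\delta\}$. On $A$ the density is bounded below, so $\Phi$ is $C^3$ there by (P), and differentiation under the integral is legitimate. Only the gravitational term is nonlocal; its expansion is exact: $-\frac1{8\pi}\int|\nabla V_\rho|^2$ has first variation $\int V_0\eta$ and second variation $-4\pi\langle(-\Delta)^{-1}\eta,\eta\rangle$ under the normalization $V_h=-4\pi(-\Delta)^{-1}h$.

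\emph{First variation.} The kinetic term is quadratic in $u$ and contributes $\int\rho_0|u|^2$ at second order and nothing at first order. The first variation equals
\[
 \int\bigl(\Phi_\rho+V_0+c_0+g_0(S_0)\bigr)\eta\ud x+\int\bigl(\Phi_S+\rho_0g_0'(S_0)\bigr)s\ud x .
\]
Both integrands vanish on $B_{R-\delta}$ by \eqref{eq:energy-Casimir-Euler-Lagrange}. The constant $c_0$ is independent of $\delta$, so one identity serves every annulus.

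\emph{Second variation.} The local density--entropy part is
\[
 \int\Bigl(\Phi_{\rho\rho}\eta^2+2\bigl(\Phi_{\rho S}+g_0'\bigr)\eta s+\bigl(\Phi_{SS}+\rho_0g_0''\bigr)s^2\Bigr)\ud x .
\]
I will reduce the coefficients as follows.
\begin{itemize}
\item $\Phi_{\rho\rho}=P_\rho/\rho_0$ by \eqref{eq:thermodynamic-identities}.
\item Since $\Phi_S/\rho=\int_0^\rho P_S\tau^{-2}\ud\tau$, we get $\Phi_{\rho S}=\Phi_S/\rho_0+P_S/\rho_0$. With $g_0'=-\Phi_S/\rho_0$ this gives $\Phi_{\rho S}+g_0'=P_S/\rho_0$.
\item For the $s^2$ coefficient, differentiate $g_0'(f(\rho))=-\Phi_S(f(\rho),\rho)/\rho$ in $\rho$ along the equilibrium curve:
\[
 g_0''f'=-\frac{\Phi_{SS}f'}{\rho}-\frac{P_S}{\rho^2}.
\]
Hence $\Phi_{SS}+\rho_0g_0''=-P_S/(\rho_0f')$. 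This step uses $f'\neq0$ in the interior, which is \eqref{eq:G-strict-interior}.
\end{itemize}
Next I check that the local quadratic form equals
\[
 w_1\eta^2+\mathfrak G\bigl(\eta-s/f'\bigr)^2 .
\]
Expanding, with $\mathfrak G=-P_Sf'/\rho_0$ and $w_1=\widetilde P'/\rho_0=(P_\rho+P_Sf')/\rho_0$:
\begin{itemize}
\item the $\eta^2$ coefficient is $(P_\rho+P_Sf'-P_Sf')/\rho_0=P_\rho/\rho_0$;
\item the cross coefficient is $-2\mathfrak G/f'=2P_S/\rho_0$;
\item the $s^2$ coefficient is $\mathfrak G/f'^2=-P_S/(\rho_0f')$.
\end{itemize}
All three match. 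Adding the gravitational and kinetic terms gives \eqref{eq:energy-Casimir-Hessian}. In the variables \eqref{eq:eta-zeta-variables} this is exactly $\langle L_{11}\eta,\eta\rangle+\langle w_2\zeta,\zeta\rangle+\langle Au,u\rangle=\langle\mathbf L(z,u),(z,u)\rangle$.

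\emph{Extension.} Boundedness of the form on $X\times Y$ has three parts.
\begin{itemize}
\item The weighted terms are bounded by the norms of $X_1$, $X_2$ and $Y$.
\item The Newton term is bounded on $X_1$ by \cref{lem:potential-compact}.
\item Smooth perturbations supported in annuli are dense in $X\times Y$. This follows by the cutoff and mollification argument of \cref{lem:B-closed}, applied also near $r=0$, where $w_1$ and $\mathfrak G$ are locally integrable. Note that $\zeta$ ranges over such functions exactly when $s$ does, since $f'$ is smooth and nonzero on the annulus.
\end{itemize}
Together these give a unique bounded extension.

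The main obstacle is the $s^2$ coefficient. It involves $g_0''$, which is defined only in the interior, and the identity relies on differentiating the defining relation \eqref{eq:g0-definition} through the composition $S_0=f(\rho_0)$. I would verify this carefully using $f\in C^2$ on the open density range and $\rho_0'<0$; this is also why the perturbations are restricted to compact annuli.
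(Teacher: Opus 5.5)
Your proposal is correct and follows essentially the same route as the paper: the same Taylor expansion, the same three coefficient identities (which you derive in more detail), the same completed square, and an extension step that is the paper's annular-cutoff limit rephrased as density plus boundedness of the completed form. One small imprecision: $f'$ is only $C^1$ on the open density range, not smooth, but continuity and nonvanishing of $f'(\rho_0)$ on compact annuli is all your density remark about $\zeta$ and $s$ requires.
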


\begin{proof}
The first variation follows from
\eqref{eq:energy-Casimir-Euler-Lagrange}.  To justify the second variation,
fix $0<\eps<R-\delta$ and first work on the annulus
\[
 A_{\eps,\delta}=\{x:\eps<|x|<R-\delta\}.
\]
The entropy values on this annulus form a compact subset of $J_0$, so
$g_0$ is $C^2$ there.  From
\eqref{eq:thermodynamic-identities} and \eqref{eq:g0-definition}, along the
equilibrium,
\begin{equation}\label{eq:Hessian-coefficient-identities}
 \Phi_{S\rho}+g_0'=\frac{P_S}{\rho_0},
 \qquad
 \Phi_{SS}+\rho_0g_0''=-\frac{P_S}{\rho_0f'(\rho_0)},
 \qquad
 \Phi_{\rho\rho}=w_1+\mathfrak G.
\end{equation}
Indeed, the first identity follows from
$\partial_\rho(\Phi_S/\rho)=P_S/\rho^2$; differentiating
\eqref{eq:g0-definition} along $S_0=f(\rho_0)$ gives the second; and the last
uses
\[
 \frac{P_\rho}{\rho_0}
 =\frac{P_\rho+P_Sf'}{\rho_0}-\frac{P_Sf'}{\rho_0}
 =w_1+\mathfrak G.
\]
Thus, by \eqref{eq:Hessian-coefficient-identities}, the local
density--entropy part of the Hessian is
\[
 (w_1+\mathfrak G)\eta^2
 +2\frac{P_S}{\rho_0}\eta s
 -\frac{P_S}{\rho_0f'}s^2
 =w_1\eta^2+\mathfrak G\left|\eta-\frac{s}{f'}\right|^2.
\]
The gravitational and kinetic variations give the other two terms in
\eqref{eq:energy-Casimir-Hessian}.  Now let $\eps\downarrow0$ and
$\delta\downarrow0$.  We pass to the limit in the completed-square
expression, not in its three uncompleted coefficients.  The phase-space
norms give
\[
 \int w_1|\eta|^2+\int\mathfrak G
 \left|\eta-\frac{s}{f'(\rho_0)}\right|^2
 +\int\rho_0|u|^2<\infty,
\]
so absolute continuity of the integrals makes both annular cutoff errors tend
to zero.  The Newton term converges by its boundedness on $X_1$.  This proves
\eqref{eq:energy-Casimir-Hessian} and its unique bounded extension.  In
particular, no value of $g_0''$ is required at either the central or vacuum
entropy endpoint.
\end{proof}

\subsection{A weighted Hardy estimate and the radial range}

For radial $\eta$, define its enclosed perturbed mass and the reconstruction
operator by
\begin{equation}\label{eq:Ieta-and-T}
 I_\eta(r)=4\pi\int_0^rs^2\eta(s)\ud s,
 \qquad
 (T\eta)(r)=\frac{\rho_0'(r)}{4\pi r^2\rho_0(r)}I_\eta(r).
\end{equation}
The mass functional is continuous on $X_1^{\mathrm{rad}}$, because
$w_1^{-1}\in L^1(B_R)$.  Hence
\begin{equation}\label{eq:mass-zero-space}
 \mathsf H_0
 =\left\{\eta\in X_1^{\mathrm{rad}}:I_\eta(R)=0\right\}
\end{equation}
is a closed codimension-one subspace.

\begin{lemma}\label{lem:radial-Hardy}
The map $T$ extends uniquely to a
bounded operator
\[
 T:\mathsf H_0\longrightarrow X_2^{\mathrm{rad}}.
\]
Moreover, the radial momentum field
\begin{equation}\label{eq:q-eta}
 q_\eta(r)=-\frac{I_\eta(r)}{4\pi r^2}e_r
\end{equation}
belongs to $Y^*$ and
\begin{equation}\label{eq:Hardy-estimates}
 \|q_\eta\|_{Y^*}+\|T\eta\|_{X_2}
 \leq C\|\eta\|_{X_1},
 \qquad \eta\in\mathsf H_0.
\end{equation}
In addition, the inclusion $X_1\hookrightarrow X_2$ is continuous.
\end{lemma}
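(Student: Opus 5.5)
The plan is to prove all three estimates by direct pointwise bounds on the radial profile. I would split $(0,R)$ into a central region $(0,r_0)$, a compact interior shell $[r_0,r_1]$, and the vacuum layer $(r_1,R)$, with $r_1$ taken from \cref{lem:physical-vacuum}. In fact $I_\eta(r)$ is defined pointwise for every $\eta\in X_1^{\mathrm{rad}}$, because $w_1^{-1}\in L^1(B_R)$. So $T\eta$ and $q_\eta$ are given by their explicit formulas, and once \eqref{eq:Hardy-estimates} is proved, uniqueness of the extension follows from density of the smooth elements of $\mathsf H_0$. Throughout, write $\|\eta\|^2=\int w_1|\eta|^2\ud x$. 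The asymptotics I will use are these: $w_1=\widetilde P'(\rho_0)/\rho_0$ is continuous and positive on $[0,R)$, and $w_1\asymp\rho_0^{\gamma_0-2}$ near $R$ by \eqref{eq:weight-comparability}. The coefficient $\mathfrak G$ is bounded, since $f\in C^1$ on the closed density range and $P_S/\rho_0$ is bounded. Finally, $\rho_0'(r)=-\rho_0V_0'/\widetilde P'(\rho_0)=O(r)$ near $0$, because $V_0'(r)=O(r)$.

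\emph{Inclusion $X_1\hookrightarrow X_2$.} It suffices to show $\mathfrak G\le Cw_1$ on $B_R$. Near $R$ this holds because $\mathfrak G\le C\rho_0^{\gamma_0-1}\le C\rho_0^{\gamma_0-2}\asymp w_1$. On $[0,r_1]$, $\mathfrak G$ is bounded and $w_1$ is bounded below.

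\emph{Center and interior.} Cauchy--Schwarz gives
\[
 |I_\eta(r)|\le\Bigl(\int_{B_r}w_1|\eta|^2\Bigr)^{1/2}\Bigl(\int_{B_r}w_1^{-1}\Bigr)^{1/2}\le Cr^{3/2}\|\eta\|,
\]
with a global bound $|I_\eta|\le C\|\eta\|$. For $q_\eta$ on $(0,r_0)$ the integrand $r^2|I_\eta|^2/(r^4\rho_0)$ is $O(r)\|\eta\|^2$. For $T\eta$, using $\rho_0'=O(r)$, the integrand $r^2\mathfrak G\rho_0'^2|I_\eta|^2/(r^4\rho_0^2)$ is $O(r^3)\|\eta\|^2$. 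On the shell $[r_0,r_1]$ all coefficients are bounded, so both contributions are at most $C\|\eta\|^2$.

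\emph{Vacuum layer (the main step).} Here the mass constraint is essential. Since $I_\eta(R)=0$, we have $I_\eta(r)=-4\pi\int_r^Rs^2\eta\ud s$. Cauchy--Schwarz together with \cref{lem:physical-vacuum} then gives, with $d=R-r$ and $\beta=1/(\gamma_0-1)$,
\[
 |I_\eta(r)|^2\le C\|\eta\|^2\int_r^R\rho_0^{2-\gamma_0}\ud s\le C\|\eta\|^2d^{(2-\gamma_0)\beta+1}=C\|\eta\|^2d^{\beta}\asymp\|\eta\|^2\rho_0(r).
\]
Hence $|I_\eta|^2/\rho_0\le C\|\eta\|^2$ pointwise, and the $Y^*$ integral over $(r_1,R)$ is bounded. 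For $T\eta$, combine $\mathfrak G\le C\rho_0^{\gamma_0-1}$, $\rho_0'^2\asymp d^{2(2-\gamma_0)\beta}$ and the bound on $|I_\eta|^2$. This gives
\[
 \mathfrak G\,\frac{\rho_0'^2}{\rho_0^2}\,|I_\eta|^2\le C\|\eta\|^2\,d^{(\gamma_0-1)\beta-\beta+2(2-\gamma_0)\beta}=C\|\eta\|^2d^{(2-\gamma_0)\beta},
\]
which is bounded because $\gamma_0<2$. Summing the three regions yields \eqref{eq:Hardy-estimates}.

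This vacuum layer is where I expect the real obstacle. Without the constraint, $I_\eta(r)\to I_\eta(R)\ne0$, and then $|I_\eta|^2/\rho_0\sim\rho_0^{-1}$, which is not integrable when $\beta\ge1$. The constraint is therefore needed to convert the tail integral into a factor comparable to $\rho_0$. The exponent bookkeeping above would be rechecked carefully against \eqref{eq:physical-vacuum}--\eqref{eq:physical-vacuum-derivative}.
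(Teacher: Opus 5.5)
Your proposal is correct and follows the paper's proof. It uses the same three-region split, the same weighted Cauchy--Schwarz bound exploiting $I_\eta(R)=0$ to get $|I_\eta(r)|^2\le C(R-r)^{1/(\gamma_0-1)}\|\eta\|_{X_1}^2\asymp\rho_0(r)\|\eta\|_{X_1}^2$ near vacuum, the same $\rho_0'=O(r)$ at the center, and the same comparison $\mathfrak G\le C\rho_0w_1$ for the inclusion; the only difference is that you bound the tail energy $\int_r^Rw_1|\eta|^2s^2\ud s$ by the full norm to get pointwise-bounded integrands, whereas the paper integrates it with Tonelli to obtain vacuum-layer contributions of size $O(\eps)$ and $O(\eps^{1/(\gamma_0-1)})$ times $\|\eta\|_{X_1}^2$, which is slightly more than the lemma needs.
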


\begin{proof}
Put
\[
 \alpha=\frac1{\gamma_0-1}>1,
 \qquad
 \beta=\frac{2-\gamma_0}{\gamma_0-1}=\alpha-1.
\]
By \cref{lem:physical-vacuum} and
\eqref{eq:weight-comparability}, with $d=R-r$,
\begin{equation}\label{eq:boundary-weight-summary}
 \rho_0\asymp d^\alpha,
 \quad w_1\asymp d^{-\beta},
 \quad w_2\leq Cd,
 \quad \left|\frac{\rho_0'}{\rho_0}\right|\leq \frac Cd
\end{equation}
near $R$.  Since $I_\eta(R)=0$, weighted Cauchy--Schwarz gives
\begin{equation}\label{eq:Ieta-boundary-estimate}
\begin{split}
 |I_\eta(r)|^2
 &=(4\pi)^2\left|\int_r^Rs^2\eta(s)\ud s\right|^2\\
 &\leq C\left(\int_r^Rw_1|\eta|^2s^2\ud s\right)
             \left(\int_r^R\frac{s^2}{w_1}\ud s\right)
 \leq Cd^\alpha\int_r^Rw_1|\eta|^2s^2\ud s.
\end{split}
\end{equation}

Set
\[
 E_R(r)=\int_r^Rw_1(s)|\eta(s)|^2s^2\ud s.
\]
Then \eqref{eq:Ieta-boundary-estimate} and Tonelli's theorem give the two
endpoint integrations explicitly:
\begin{align*}
 \int_{R-\eps}^R\frac{|I_\eta(r)|^2}{r^2\rho_0(r)}\ud r
 &\leq C\int_{R-\eps}^RE_R(r)\ud r\\
 &=C\int_{R-\eps}^Rw_1(s)|\eta(s)|^2s^2
       \int_{R-\eps}^{s}\ud r\,\ud s
 \leq C\eps\|\eta\|_{X_1}^2,
\end{align*}
and, using \eqref{eq:boundary-weight-summary},
\begin{align*}
 &\int_{R-\eps}^R
 w_2\left|\frac{\rho_0'I_\eta}{r^2\rho_0}\right|^2r^2\ud r
 \leq C\int_{R-\eps}^R\frac{|I_\eta(r)|^2}{R-r}\ud r\\
 &\qquad\leq C\int_{R-\eps}^R(R-r)^{\alpha-1}E_R(r)\ud r\\
 &\qquad=C\int_{R-\eps}^Rw_1(s)|\eta(s)|^2s^2
       \int_{R-\eps}^{s}(R-r)^{\alpha-1}\ud r\,\ud s
 \leq C\eps^\alpha\|\eta\|_{X_1}^2.
\end{align*}
Here $\alpha>1$; in fact, only $\alpha>0$ is needed for these two
integrations.

Near $r=0$, radial $C^2$ regularity gives $\rho_0'(r)=O(r)$, while
\[
 |I_\eta(r)|^2
 \leq Cr^3\int_0^rw_1|\eta|^2s^2\ud s.
\]

Writing $E_0(r)=\int_0^rw_1|\eta|^2s^2\ud s$ and using
$\rho_0\asymp1$ near the center, Tonelli gives
\[
 \int_0^\eps\frac{|I_\eta(r)|^2}{r^2\rho_0(r)}\ud r
 \leq C\int_0^\eps rE_0(r)\ud r
 \leq C\eps^2\|\eta\|_{X_1}^2.
\]
Moreover, $\rho_0'/\rho_0=O(r)$, and therefore
\[
 \int_0^\eps
 w_2\left|\frac{\rho_0'I_\eta}{r^2\rho_0}\right|^2r^2\ud r
 \leq C\int_0^\eps w_2(r)r^3E_0(r)\ud r
 \leq C\eps^4\|\eta\|_{X_1}^2,
\]
because $w_2$ is bounded near the center by the endpoint $C^1$ regularity
of $f$.  On a compact annulus away from $0$
and $R$, all coefficients are bounded and the mass functional
$I_\eta(r)$ is bounded by weighted Cauchy--Schwarz.  This proves
\eqref{eq:Hardy-estimates}.

Finally, $w_2/w_1\leq C\rho_0$ near the boundary and is bounded in the
interior, so $\|\eta\|_{X_2}\leq C\|\eta\|_{X_1}$.
\end{proof}

Set
\begin{equation}\label{eq:K-radial}
 K=\Id-T:\mathsf H_0\longrightarrow X_2^{\mathrm{rad}}.
\end{equation}

\begin{proposition}
\label{prop:radial-range}
One has
\begin{equation}\label{eq:radial-range-graph}
 \Ran B\cap X^{\mathrm{rad}}
 =\overline{\Ran B}\cap X^{\mathrm{rad}}
 =\left\{\binom{\eta}{K\eta}:\eta\in\mathsf H_0\right\}.
\end{equation}
In particular, the radial range is closed.
\end{proposition}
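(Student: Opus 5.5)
We will prove the two inclusions $\Ran B\cap X^{\mathrm{rad}}\subset\mathcal G\subset\Ran B\cap X^{\mathrm{rad}}$, where $\mathcal G=\{(\eta,K\eta):\eta\in\mathsf H_0\}$. We then show that $\mathcal G$ is closed in $X$ and that $\overline{\Ran B}\cap X^{\mathrm{rad}}$ is the closure of $\Ran B\cap X^{\mathrm{rad}}$. Together these give all three equalities.

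\emph{Step 1: $\mathcal G\subset\Ran B$.} For $\eta\in\mathsf H_0$ take $q=q_\eta$ from \eqref{eq:q-eta}. By \cref{lem:radial-Hardy}, $q_\eta\in Y^*$. In distributions on $B_R$ we have $\Div q_\eta=-r^{-2}\partial_r(r^2 I_\eta/(4\pi r^2))=-\eta$, since $\partial_rI_\eta=4\pi r^2\eta$. Hence $B_1q_\eta=\eta\in X_1$. Next, $a_0\cdot q_\eta=-\frac{\rho_0'}{\rho_0}\frac{I_\eta}{4\pi r^2}=-T\eta$. Thus $B_2q_\eta=\eta-T\eta=K\eta$, which lies in $X_2$ by the Hardy bound \eqref{eq:Hardy-estimates} and the inclusion $X_1\hookrightarrow X_2$. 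So $q_\eta\in D(B)$ and $Bq_\eta=(\eta,K\eta)$.

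\emph{Step 2: $\Ran B\cap X^{\mathrm{rad}}\subset\mathcal G$.} Let $(g_1,g_2)=Bq$ be radial. Averaging with $P_{\mathrm{rad}}$, which commutes with $B$, we may replace $q$ by its radial projection; this is a radial field $q=\tilde q(r)e_r$ with $\tilde q\in L^2_{1/\rho_0}$. The equation $-r^{-2}(r^2\tilde q)'=g_1$ in $\mathcal D'(0,R)$ gives $r^2\tilde q=c-I_{g_1}(r)/4\pi$, with $r^2\tilde q$ absolutely continuous on compact subsets. Near $r=0$, the condition $\tilde q\in L^2_{1/\rho_0}$ with $\rho_0\asymp1$ forces $c=0$, since $c/r^2\notin L^2(r^2\,dr)$ there. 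Near $r=R$, integrability of $|\tilde q|^2/\rho_0$ with $\rho_0\asymp d^\alpha$, $\alpha>1$, forces $r^2\tilde q(r)\to0$ along a sequence $r\to R$. Because $I_{g_1}(r)\to I_{g_1}(R)$, this gives $I_{g_1}(R)=0$. Hence $g_1\in\mathsf H_0$ and $q=q_{g_1}$. Then $g_2=g_1+a_0\cdot q=Kg_1$ by the computation in Step 1.

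\emph{Step 3: closedness and the closure.} The set $\mathcal G$ is the graph of the bounded operator $K:\mathsf H_0\to X_2^{\mathrm{rad}}$ from \cref{lem:radial-Hardy}, defined on the closed subspace $\mathsf H_0$. It is therefore closed in $X$. The projection $P_{\mathrm{rad}}$ is an orthogonal projection on $X$ that maps $\Ran B$ into $\Ran B\cap X^{\mathrm{rad}}$, because $B$ commutes with rotations and hence $P_{\mathrm{rad}}Bq=BP_{\mathrm{rad}}q$. Consequently
\[
\overline{\Ran B}\cap X^{\mathrm{rad}}=P_{\mathrm{rad}}\overline{\Ran B}\subset\overline{P_{\mathrm{rad}}\Ran B}=\overline{\mathcal G}=\mathcal G.
\]
The reverse inclusions are immediate from Steps 1 and 2.

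The main obstacle is the boundary analysis in Step 2, namely extracting $I_{g_1}(R)=0$ from the weak membership $q\in L^2_{1/\rho_0}$ with no trace assumed. The sequence argument above needs only $\int^R d^{-\alpha}\,dr=\infty$, which holds because $\alpha>1$; this is where the physical-vacuum rate of \cref{lem:physical-vacuum} enters. A secondary check is that $P_{\mathrm{rad}}$ preserves $D(B)$, which follows from the commutation stated at the start of \cref{sec:nonradial}.
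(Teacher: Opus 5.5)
Your proposal is correct and follows the paper's proof in all three steps. Those steps are: computing $Bq_\eta=(\eta,K\eta)$; rotation-averaging $q$, integrating the radial divergence equation and eliminating the center constant; and closing up via the bounded graph of $K$ and $P_{\mathrm{rad}}$. The differences are only technical: you verify $Bq_\eta=(\eta,K\eta)$ directly for every $\eta\in\mathsf H_0$, which needs the small extra check that $I_\eta(r)\to0$ as $r\to0$ leaves no point mass at the origin, where the paper treats smooth $\eta$ and uses closedness of $B$; and you get $I_{g_1}(R)=0$ from the endpoint limit of the continuous function $r^2\tilde q$ and the nonintegrability of $1/\rho_0$ at $R$, where the paper uses a boundary-cutoff flux estimate.
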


\begin{proof}
First take a smooth radial $\eta\in\mathsf H_0$.  The momentum
$q_\eta$ in \eqref{eq:q-eta} is regular at the center and belongs to $Y^*$
by \cref{lem:radial-Hardy}.  Direct calculation gives
\[
 -\Div q_\eta=\eta,
 \qquad
 -\Div q_\eta+a_0\cdot q_\eta=\eta-T\eta=K\eta.
\]
Thus $Bq_\eta=(\eta,K\eta)$.  Smooth mass-zero functions are dense in
$\mathsf H_0$ defined in \eqref{eq:mass-zero-space}: truncate
and mollify, then correct the mass with one fixed
compactly supported smooth function.  The estimates in
\cref{lem:radial-Hardy} and closedness of $B$ extend the identity to all
$\eta\in\mathsf H_0$.

Conversely, suppose that $Bq$ is radial.  Averaging $q$ over the rotation
group preserves $D(B)$ and commutes with $B$, so we may replace $q$ by its
radial average and write $q=q_r(r)e_r$.  Set
$\eta=-r^{-2}(r^2q_r)'$.  Using a boundary cutoff $\chi_\eps$ with
$|\nabla\chi_\eps|\leq C/\eps$ and
\cref{lem:physical-vacuum},
\[
 \left|\int q\cdot\nabla\chi_\eps\ud x\right|
 \leq
 \|q\|_{L^2_{1/\rho_0}(\{d<2\eps\})}
 \left(\int_{\{d<2\eps\}}\rho_0|\nabla\chi_\eps|^2\ud x\right)^{1/2}
 \longrightarrow0,
\]
because the second factor is $O(\eps^{(\alpha-1)/2})$.  Hence
$\int_{B_R}\eta\ud x=0$.  Integration of the radial divergence equation
gives
\[
 r^2q_r(r)=-\frac{I_\eta(r)}{4\pi}+C.
\]
The center constant is zero: if $C\neq0$, then $q_r\sim Cr^{-2}$ and
$\int_0^\delta |q_r|^2r^2/\rho_0\ud r=\infty$, contrary to $q\in Y^*$.
Thus $q=q_\eta$, and the second component of $Bq$ is $K\eta$.
This proves the first equality in \eqref{eq:radial-range-graph}.  The right
side is the graph of a bounded operator over the closed space $\mathsf H_0$,
so it is closed in $X^{\mathrm{rad}}$.  Finally, if
$Bq_n\to z\in X^{\mathrm{rad}}$, rotation averaging gives
$B\overline q_n=\overline{Bq_n}\to z$, where every
$B\overline q_n$ belongs to the graph just identified.  Its closedness then
gives $z$ in that graph and proves the second equality.
\end{proof}

\subsection{Entropy-Casimir constraints}

For a density--entropy perturbation $(\eta,s)$, the linearization of
\eqref{eq:entropy-Casimir} is
\begin{equation}\label{eq:linearized-Casimir}
 \delta\mathcal C_g[\eta,s]
 =\int_{B_R}\left[g(S_0)\eta+\rho_0g'(S_0)s\right]\ud x.
\end{equation}

Let $\mathscr G_0$ be the class of $g\in C^1(\R)$ such that
$\operatorname{supp}(g'|_{J_0})\Subset J_0$.  This includes the constants.
For $g\in\mathscr G_0$, \eqref{eq:linearized-Casimir} is a continuous
functional on the phase space: its entropy-dependent coefficient is
supported away from the vacuum boundary.  This test class is already large
enough to separate radial entropy perturbations.  If the integral in
\eqref{eq:linearized-Casimir} is known to be continuous for a larger class of
Casimirs, the equivalent constraint automatically holds for that larger
class as well.

\begin{proposition}\label{prop:Casimir-accessibility}
For radial $(\eta,s)$ with
$(\eta,\eta-s/f'(\rho_0))\in X^{\mathrm{rad}}$, the following are equivalent:
\begin{enumerate}[label=(\roman*)]
\item $(\eta,\eta-s/f'(\rho_0))\in\overline{\Ran B}$;
\item
\begin{equation}\label{eq:radial-entropy-reconstruction}
 I_\eta(R)=0,
 \qquad
 s(r)=f'(\rho_0(r))\frac{\rho_0'(r)}{4\pi r^2\rho_0(r)}I_\eta(r);
\end{equation}
\item $\delta\mathcal C_g[\eta,s]=0$ for every $g\in\mathscr G_0$.
\end{enumerate}
\end{proposition}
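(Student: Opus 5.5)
The plan is to prove (i)$\Leftrightarrow$(ii) directly from \cref{prop:radial-range}, and then (ii)$\Leftrightarrow$(iii) by testing the linearized Casimirs \eqref{eq:linearized-Casimir} against a sufficiently rich family of $g$.

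\emph{Step (i)$\Leftrightarrow$(ii).} By \cref{prop:radial-range}, a radial pair $(\eta,\zeta)$ lies in $\overline{\Ran B}$ iff $\eta\in\mathsf H_0$ and $\zeta=K\eta=\eta-T\eta$. With $\zeta=\eta-s/f'(\rho_0)$ this reads $s/f'(\rho_0)=T\eta$. Since $f'<0$ in the interior, this is equivalent to \eqref{eq:radial-entropy-reconstruction}. Membership in $X_2$ is automatic by \cref{lem:radial-Hardy}.

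\emph{Step (ii)$\Rightarrow$(iii).} Write $S_0=f(\rho_0)$ and $\frac{\ud}{\ud r}g(S_0)=g'(S_0)f'(\rho_0)\rho_0'$. Then
\[
 \rho_0g'(S_0)s=\frac{g'(S_0)f'(\rho_0)\rho_0'}{4\pi r^2}I_\eta=\frac{(g(S_0))'}{4\pi r^2}I_\eta .
\]
Hence
\[
 \delta\mathcal C_g=\int_0^R\Bigl[g(S_0)I_\eta'+(g(S_0))'I_\eta\Bigr]\ud r
 =\bigl[g(S_0)I_\eta\bigr]_0^R .
\]
The boundary terms vanish: $I_\eta(0)=0$, $I_\eta(R)=0$, and $g(S_0)$ is bounded. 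To make this rigorous for $\eta$ only in $X_1$, I would use that $g\in\mathscr G_0$ has $g(S_0(r))$ constant near $r=0$ and near $r=R$ (because $g'$ is supported in a compact subset of $J_0$). The integration by parts then takes place on a compact annulus, where $I_\eta$ is absolutely continuous and all coefficients are bounded. Near the ends the integrand is just $c\,\eta$, whose integral is controlled by $I_\eta$ at the annulus endpoints and by the mass constraint.

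\emph{Step (iii)$\Rightarrow$(ii).} First take $g\equiv1$, which gives $I_\eta(R)=0$. Next, for any $\psi\in C_c^\infty((0,R))$, let $g$ be defined by $g(S_0(r))=\int_0^r\psi$ minus a constant, arranged so that $g'$ has compact support in $J_0$. This is possible because $S_0$ is a strictly monotone $C^1$ map with $S_0'=f'\rho_0'>0$ in the interior, so $g'(S_0(r))=\psi(r)/S_0'(r)$ defines a compactly supported $C^1$ function on $J_0$. The constant part of $g$ is handled by the mass constraint. Repeating the computation of Step (ii) on an annulus gives
\[
 0=\delta\mathcal C_g=\int_0^R\psi(r)\left[\frac{4\pi r^2\rho_0 s}{f'(\rho_0)\rho_0'}-I_\eta\right]\ud r .
\]
Since $\psi$ is arbitrary, the bracket vanishes almost everywhere, which is exactly the reconstruction formula.

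The main obstacle is the regularity bookkeeping. One must check that $g$ built this way is $C^1$ on all of $\R$, which requires that $S_0$ be a $C^1$ diffeomorphism on the relevant compact shell; this holds since $\rho_0\in C^2$ and $f\in C^2$ in the interior. One must also justify the integration by parts for general $\eta\in X_1$ and $s$ with only weighted integrability, which works because everything is localized to compact annuli by the support condition on $g'$.
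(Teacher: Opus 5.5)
Your proposal is correct and follows essentially the same route as the paper: you obtain (i)$\Leftrightarrow$(ii) from \cref{prop:radial-range}, (ii)$\Rightarrow$(iii) from the radial integration by parts $\bigl[g(S_0)I_\eta\bigr]_0^R=0$, and (iii)$\Rightarrow$(ii) by taking $g\equiv1$ and then testing against a rich class of $g$ with $g'$ compactly supported in $J_0$. The only difference is cosmetic: you build $g$ explicitly from $\psi\in C_c^\infty((0,R))$ via $g'(S_0)=\psi/S_0'$ and compute $\delta\mathcal C_g$ directly, whereas the paper subtracts the reconstructed pair and uses density of $g'(S_0(r))$ in continuous test functions; the two identities agree up to the positive weight $4\pi r^2\rho_0/(f'(\rho_0)\rho_0')$.
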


\begin{proof}
The equivalence of (i) and (ii) follows from
\cref{prop:radial-range}, since
$\eta-s/f'=K\eta$ with $K$ as in \eqref{eq:K-radial} is
equivalent to $s=f'T\eta$.

Assume (ii).  Since $I_\eta'=4\pi r^2\eta$, radial integration gives
\begin{align*}
 \delta\mathcal C_g[\eta,s]
 &=\int_0^R g(S_0)I_\eta'\ud r
   +\int_0^R g'(S_0)f'(\rho_0)\rho_0'I_\eta\ud r\\
 &=\bigl[g(S_0(r))I_\eta(r)\bigr]_{r=0}^{r=R}=0.
\end{align*}
Thus (iii) holds.

Conversely, take $g\equiv1$ in (iii) to get $I_\eta(R)=0$, and set
$h=s-f'T\eta$.  The reconstructed pair $(\eta,f'T\eta)$ already satisfies
all the constraints, so
\[
 \int_{B_R}\rho_0g'(S_0)h\ud x=0
 \qquad\text{for every }g\in\mathscr G_0.
\]
On any compact subinterval of $(0,R)$, strict monotonicity of $S_0$ lets
$g'(S_0(r))$ range over a dense class of continuous test functions.
Consequently $\rho_0h=0$ there.  Exhausting $(0,R)$ by compact subintervals
gives $h=0$ almost everywhere and proves (ii).
\end{proof}

\subsection{The reduced radial form and instability index}

Define the bounded symmetric form on the mass-zero space $\mathsf H_0$ by
\begin{equation}\label{eq:Q0-reduced}
\begin{split}
 Q_0[\eta]
={}&\left\langle
 \bigl(w_1-4\pi(-\Delta)^{-1}\bigr)\eta,\eta
 \right\rangle
 +\int_{B_R}\mathfrak G|K\eta|^2\ud x,
 \qquad \eta\in\mathsf H_0.
\end{split}
\end{equation}
The restriction to $\mathsf H_0$ is essential: if $I_\eta(R)\neq0$, the
reconstruction term need not belong to $X_2$ at the vacuum boundary.

\begin{lemma}\label{lem:Q0-Fredholm}
The restriction $Q_0|_{\mathsf H_0}$ is a bounded symmetric Fredholm form
with finite Morse index.  If $\mathscr A_0$ denotes its Riesz representative,
then, schematically,
\begin{equation}\label{eq:Q0-identity-compact}
 \mathscr A_0=\Id+K^*K+\mathscr C,
\end{equation}
where $K^*K\geq0$ and $\mathscr C$ is compact.
\end{lemma}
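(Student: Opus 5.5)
The plan is to split $Q_0$ into its three parts and treat each by results already in hand. Equip $\mathsf H_0$ with the $X_1$ inner product. The local part $\int w_1|\eta|^2\ud x$ is then exactly $\|\eta\|_{X_1}^2$, and it contributes $\Id$ to the Riesz representative. By \cref{lem:radial-Hardy}, $K=\Id-T$ is bounded from $\mathsf H_0$ into $X_2^{\mathrm{rad}}$; this uses the continuity of the inclusion $X_1\hookrightarrow X_2$ together with the Hardy bound on $T$. So
\[
 \int_{B_R}\mathfrak G|K\eta|^2\ud x=\|K\eta\|_{X_2}^2=(K^*K\eta,\eta)_{X_1},
\]
where $K^*:X_2^{\mathrm{rad}}\to\mathsf H_0$ is the Hilbert adjoint. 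This term is bounded, symmetric, and nonnegative. The Newton term $-4\pi\langle(-\Delta)^{-1}\eta,\eta\rangle$ has, on $X_1$, the Riesz representative $-\frac{4\pi}{w_1}(-\Delta)^{-1}$. That operator is compact and self-adjoint by \cref{lem:potential-compact}. Compressing it to the closed subspace $\mathsf H_0$ by the orthogonal projection $\Pi_0$ keeps it compact; call the result $\mathscr C=-4\pi\Pi_0 w_1^{-1}(-\Delta)^{-1}|_{\mathsf H_0}$. Together these give \eqref{eq:Q0-identity-compact}, and they show that $Q_0$ is bounded and symmetric on $\mathsf H_0$.

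For the Fredholm property, note that $\Id+K^*K$ is self-adjoint with $\Id+K^*K\geq\Id$, so it is an isomorphism of $\mathsf H_0$. Adding the compact self-adjoint $\mathscr C$ gives a self-adjoint Fredholm operator of index zero with finite-dimensional kernel.

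For the finite Morse index, use the spectral theorem for $\mathscr C$. Fix $\eps\in(0,1)$ and let $E_\eps$ be the finite-dimensional spectral subspace of $\mathscr C$ for eigenvalues $\leq-(1-\eps)$. On $E_\eps^\perp$ we have
\[
 Q_0[\eta]\geq\|\eta\|^2-(1-\eps)\|\eta\|^2=\eps\|\eta\|^2,
\]
so any subspace on which $Q_0$ is negative definite meets $E_\eps^\perp$ trivially. Therefore $n^-(Q_0|_{\mathsf H_0})\leq\dim E_\eps<\infty$. Equivalently, $\spec(\mathscr A_0)\cap(-\infty,0]$ consists of finitely many eigenvalues of finite multiplicity, because the essential spectrum of $\mathscr A_0$ lies in $[1,\infty)$ by Weyl's theorem.

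The only delicate step is the meaning of "schematically." $\mathscr C$ must be understood as the compression to $\mathsf H_0$, and the weighted duality pairing must be converted to the $X_1$ Riesz map consistently. I would check that the mass functional $\eta\mapsto I_\eta(R)$ is continuous on $X_1^{\mathrm{rad}}$, since $w_1^{-1}\in L^1$. This makes $\mathsf H_0$ closed and $\Pi_0$ a bounded orthogonal projection, so compactness survives the compression. Everything else follows directly from \cref{lem:radial-Hardy,lem:potential-compact}.
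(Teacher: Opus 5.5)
Your proposal is correct and follows the paper's proof. Working in the $X_1$ inner product, the $w_1$ term becomes $\Id$, the Schwarzschild term becomes $K^*K$ through the boundedness of $K$ from \cref{lem:radial-Hardy}, and the Newton term is compact by \cref{lem:potential-compact}; coercivity of $\Id+K^*K$ then gives the Fredholm property. Your compression by $\Pi_0$ and your bound $n^-\leq\dim E_\eps$ make explicit what the paper covers in one line, namely that a finite-codimensional restriction preserves the Fredholm property and finite Morse index.
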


\begin{proof}
The boundedness of $K:\mathsf H_0\to X_2^{\mathrm{rad}}$ follows from
\cref{lem:radial-Hardy}.  Relative to the $X_1$ inner product, multiplication
by $w_1$ is the identity, the Schwarzschild term is $K^*K$, and the Newton
term is compact by \cref{lem:potential-compact}.  Thus
$\Id+K^*K$ is coercive and \eqref{eq:Q0-identity-compact} is Fredholm.  A
finite-codimensional restriction preserves the Fredholm property and finite
Morse index.
\end{proof}

\begin{theorem}\label{thm:radial-index}
The algebraic number of exponentially growing radial modes is
\begin{equation}\label{eq:radial-index-formula}
 n^u_{\mathrm{rad}}
 =n^-\!\left(Q_0\big|_{\mathsf H_0}\right).
\end{equation}
In particular, the equilibrium is radially mode stable if and only if
$Q_0[\eta]\geq0$ for every $\eta\in\mathsf H_0$.
\end{theorem}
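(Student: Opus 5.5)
The plan is to apply the abstract index formula \eqref{eq:global-index-formula} of \cref{thm:Hamiltonian-index} to the radial invariant subspace $\mathscr H_{\mathrm{rad}}=X_{\mathrm{rad}}\times Y_{2,\mathrm{rad}}$ from \eqref{eq:radial-nonradial-phase-decomposition}, and then identify the constrained form on the radial range with $Q_0$ via \cref{prop:radial-range}.

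\emph{Restriction to the radial sector.} The rotations $\mathcal R_Q$ commute with $A$, $B$, $B'$ and $\mathbb L$, so $P_{\mathrm{rad}}$ commutes with $\mathbf J\mathbf L$. The radial system is therefore again a separable Hamiltonian system with $B_{\mathrm{rad}}=B|_{D(B)\cap A(Y_{2,\mathrm{rad}})}$, and its dual is taken relative to the closed subspaces, exactly as was done for \eqref{eq:nonradial-separable-system}. The hypotheses of \cite[Theorem~2.1]{LinZeng2022} are inherited:
\begin{itemize}
\item closedness and density follow by averaging, as in the proof of \cref{lem:B-closed};
\item positivity of $A$ is immediate;
\item the finite-index decomposition of $\mathbb L|_{X_{\mathrm{rad}}}$ comes from \cref{lem:potential-compact} restricted to an invariant subspace;
\item the kernel-domain condition is handled by the finite-kernel argument in the proof of \cref{thm:Hamiltonian-index}.
\end{itemize}
Moreover $Y_1\cap P_{\mathrm{rad}}Y=\{0\}$, so no stationary radial velocity is lost. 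Hence
\[
 n^u_{\mathrm{rad}}=n^-\bigl(\mathbb L|_{\overline{\Ran B_{\mathrm{rad}}}}\bigr).
\]
Alternatively, since the unstable subspace is finite dimensional and $P_{\mathrm{rad}}$ commutes with the group, $E^u$ splits into radial and non-radial parts. \Cref{thm:nonradial-stability} gives $n^u_{\mathrm{nr}}=0$, which yields the same conclusion together with $n^u=n^u_{\mathrm{rad}}$.

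\emph{Identifying the range.} By averaging, $\overline{\Ran B_{\mathrm{rad}}}=\overline{\Ran B}\cap X^{\mathrm{rad}}$. By \cref{prop:radial-range}, this space is the graph $\{(\eta,K\eta):\eta\in\mathsf H_0\}$. The map $\eta\mapsto(\eta,K\eta)$ is a bounded linear isomorphism from $\mathsf H_0$ onto this graph, by \cref{lem:radial-Hardy}. Pulling back the form gives
\[
 \langle\mathbb L(\eta,K\eta),(\eta,K\eta)\rangle=\langle L_{11}\eta,\eta\rangle+\int\mathfrak G|K\eta|^2=Q_0[\eta].
\]
The Morse index is invariant under bounded isomorphisms, since it is the maximal dimension of a negative subspace. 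This proves \eqref{eq:radial-index-formula}, and the index is finite by \cref{lem:Q0-Fredholm}.

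\emph{The ``in particular'' clause.} Since all unstable eigenvalues are real and positive, radial mode stability means $n^u_{\mathrm{rad}}=0$. This holds precisely when $n^-(Q_0|_{\mathsf H_0})=0$, that is, when $Q_0\geq0$ on $\mathsf H_0$.

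\emph{Expected obstacle.} The main difficulty is checking that the abstract theorem applies honestly on the radial subspace: the duals must be taken within closed invariant subspaces, and the kernel-domain condition must hold there. I would argue this by noting that all constructions commute with $P_{\mathrm{rad}}$. In particular, the functionals $\phi_i$ in the proof of \cref{thm:Hamiltonian-index} can be chosen radial by averaging, which preserves $D(B')$.
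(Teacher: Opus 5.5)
Your proposal is correct and follows the paper's proof. Like the paper, you identify the radial part of $\overline{\Ran B}$ with the graph $\{(\eta,K\eta):\eta\in\mathsf H_0\}$ via \cref{prop:radial-range}, pull $\mathbb L$ back to obtain $Q_0$, and invoke \eqref{eq:global-index-formula}; your additional check that the abstract theorem applies on the rotation-invariant radial sector (or, alternatively, that $n^u_{\mathrm{nr}}=0$ together with the $\mathbb L$-orthogonal splitting of $\overline{\Ran B}$ reduces the global count to the radial one) just makes explicit a step the paper's one-line argument leaves implicit.
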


\begin{proof}
By \cref{prop:radial-range}, the pullback
\[
 \eta\longmapsto(\eta,K\eta)
\]
identifies $\mathsf H_0$ with the radial part of $\overline{\Ran B}$.  Under
this identification, the restriction of $\mathbb L$ is exactly
\eqref{eq:Q0-reduced}.  Formula \eqref{eq:radial-index-formula} follows from
\eqref{eq:global-index-formula}.
\end{proof}

\begin{corollary}[Full growing-mode count]\label{cor:full-growing-count}
Under $\mathrm{(P)}$ and $\mathrm{(E_s)}$, every exponentially growing mode
is radial and
\[
 \dim E^u=\dim E^s=n^u
 =n^-\!\left(Q_0\big|_{\mathsf H_0}\right).
\]
The center group satisfies \eqref{eq:center-quadratic-bound}.
\end{corollary}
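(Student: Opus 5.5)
The plan is to combine the global trichotomy of \cref{thm:Hamiltonian-index} with the orthogonal invariant decomposition \eqref{eq:radial-nonradial-phase-decomposition}. First I will check that the decomposition reduces the generator. The group commutes with every rotation $\mathcal R_Q$, because $A$, $B$, $B'$ and $\mathbb L$ do. Hence it commutes with $P_{\mathrm{rad}}$. It also commutes with the projection $\mathbf P$ onto $\{0\}\times Y_1$ from \cref{lem:invariant-velocity-splitting}. Since $Y_1\cap P_{\mathrm{rad}}Y=\{0\}$, the stationary summand lies entirely in $\mathscr H_{\mathrm{nr}}$. Therefore $\mathscr H_{\mathrm{rad}}$ and $\mathscr H_{\mathrm{nr}}$ are complementary invariant subspaces. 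The unstable and stable spectral projections are Riesz projections of the generator, so they commute with these reducing projections. This gives $E^u=(E^u\cap\mathscr H_{\mathrm{rad}})\oplus(E^u\cap\mathscr H_{\mathrm{nr}})$, and likewise for $E^s$, with algebraic multiplicities adding.

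Second, I will show that the non-radial piece contributes nothing. On $\{0\}\times Y_1$ the group is the identity, so there is no unstable spectrum there. The active non-radial part is the restricted separable system \eqref{eq:nonradial-separable-system}. \Cref{thm:nonradial-stability} gives $n^u_{\mathrm{nr}}=0$ for it: the form $\mathbb L$ is nonnegative on $X_{\mathrm{nr}}$ by \cref{lem:D0-nonradial-positive} and $\mathfrak G>0$. The only point needing care is that the index formula \eqref{eq:global-index-formula} applies to the restricted system. I plan to justify this as follows. $\overline{\Ran B}$ splits under $P_{\mathrm{rad}}$, because $B$ commutes with rotation averaging. The Morse index of $\mathbb L$ on $\overline{\Ran B}$ is then the sum of the radial and non-radial indices, because $\mathbb L$ is rotation invariant and the cross terms between the two pieces vanish. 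Applying \eqref{eq:global-index-formula} once globally and once to each invariant restricted system (both of which satisfy the same hypotheses of \cite[Theorem~2.1]{LinZeng2022}) identifies each sector's unstable dimension with its index.

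Third, the radial piece is given by \cref{thm:radial-index}: $n^u_{\mathrm{rad}}=n^-(Q_0|_{\mathsf H_0})$. Adding the two contributions yields $n^u=n^-(Q_0|_{\mathsf H_0})$, and every element of $E^u$ lies in $\mathscr H_{\mathrm{rad}}$, so every growing mode is radial. The equality $\dim E^u=\dim E^s=n^u$ and the center estimate \eqref{eq:center-quadratic-bound} are quoted directly from \cref{thm:Hamiltonian-index}.

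The main obstacle is the additivity of indices across sectors. I will handle it by noting that $P_{\mathrm{rad}}$ is an $\mathbb L$-orthogonal projection on $\overline{\Ran B}$, since $\langle\mathbb L P_{\mathrm{rad}}z,(I-P_{\mathrm{rad}})z\rangle=0$ by invariance under Haar averaging. Additivity of $n^-$ is then immediate. This also removes any need to verify the hypotheses separately on the restricted systems: the global formula together with this splitting, and \cref{thm:radial-index}, already give the total count.
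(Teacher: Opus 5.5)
Your proposal is correct and follows essentially the same route as the paper. The paper's proof just combines \cref{thm:nonradial-stability,thm:radial-index} with the global index formula \eqref{eq:global-index-formula} and the trichotomy of \cref{thm:Hamiltonian-index}; your reduction by commuting projections, and the $\mathbb L$-orthogonality of $P_{\mathrm{rad}}$ on $\overline{\Ran B}$, make explicit the additivity of the Morse index that this combination uses (one small point: to get $Y_1\subset(I-P_{\mathrm{rad}})Y$ from $Y_1\cap P_{\mathrm{rad}}Y=\{0\}$ you also need $P_{\mathrm{rad}}Y_1\subset Y_1$, which holds because $BA$ commutes with rotations).
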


\begin{proof}
Combine \cref{thm:nonradial-stability,thm:radial-index} with the global index
formula \eqref{eq:global-index-formula} and the trichotomy in
\cref{thm:Hamiltonian-index}.
\end{proof}

\subsection{A one-constraint inertia lemma}

We record the abstract calculation used for both parameterized families.
The null case is included because it is precisely the case occurring at a
mass extremum.

\begin{lemma}\label{lem:one-constraint-inertia}
Let $\mathsf H$ be a real Hilbert space and let
$Q:\mathsf H\times\mathsf H\to\R$ be a bounded symmetric Fredholm form with
finite Morse index.  Let $\ell\in\mathsf H^*$ be nonzero and suppose that
there are $v\in\mathsf H$ and $c\in\R\setminus\{0\}$ such that
\begin{equation}\label{eq:constraint-normal-identity}
 Q(v,h)=-c\,\ell(h)
 \qquad\text{for every }h\in\mathsf H.
\end{equation}
Set $\mathsf H_\ell=\Ker\ell$ and
\begin{equation}\label{eq:q-normal}
 q=Q(v,v)=-c\,\ell(v).
\end{equation}
Then $Q|_{\mathsf H_\ell}$ is Fredholm and
\begin{equation}\label{eq:one-constraint-index}
 n^-\!\left(Q\big|_{\mathsf H_\ell}\right)
 =\begin{cases}
 n^-(Q)-1,&q<0,\\
 n^-(Q),&q>0,\\
 n^-(Q)-1,&q=0.
 \end{cases}
\end{equation}
If $N=\Ker Q$ denotes the full-space radical, then
\begin{equation}\label{eq:one-constraint-kernel}
 \Ker\!\left(Q\big|_{\mathsf H_\ell}\right)
 =\begin{cases}
 N,&q\neq0,\\
 N\oplus\Span\{v\},&q=0.
 \end{cases}
\end{equation}
Here the kernel on the left is the radical of the restricted form.
\end{lemma}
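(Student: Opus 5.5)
The plan is to decompose $\mathsf H$ along $v$ and its $Q$-orthogonal complement, and then do standard finite-dimensional inertia bookkeeping on the Fredholm form. First, Fredholmness of $Q|_{\mathsf H_\ell}$ is immediate. $\mathsf H_\ell$ has codimension one. If $\mathscr A$ is the Riesz representative of $Q$ and $P$ is the orthogonal projection onto $\mathsf H_\ell$, then $P\mathscr A P|_{\mathsf H_\ell}$ differs from a compression of a Fredholm operator by a finite-rank perturbation, so it is Fredholm. It also has finite Morse index, since any negative subspace of the restriction is negative for $Q$. Two facts get used repeatedly below. First, $v\notin N$, because $Q(v,\cdot)=-c\ell\neq0$. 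Second, $N\subset\mathsf H_\ell$: if $n\in N$, then $0=Q(v,n)=-c\,\ell(n)$, and $c\neq0$ forces $\ell(n)=0$. The identity \eqref{eq:constraint-normal-identity} also says exactly that $\mathsf H_\ell=\{h:Q(v,h)=0\}$ is the $Q$-orthogonal complement of $v$.

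\emph{Case $q\neq0$.} Here $v\notin\mathsf H_\ell$ since $\ell(v)=-q/c\neq0$. Hence $\mathsf H=\Span\{v\}\oplus\mathsf H_\ell$, and this splitting is $Q$-orthogonal. For a $Q$-orthogonal direct sum, Morse indices add and radicals add. That is the one point to check carefully in infinite dimensions. For the negative index, take the maximal negative subspaces of each summand; their sum is negative. For the converse inequality, project a negative subspace along the splitting and use a finite-dimensional Sylvester argument on any finite-dimensional subspace. Then $n^-(Q)=n^-(Q|_{\mathsf H_\ell})+[q<0]$, which gives the first two lines of \eqref{eq:one-constraint-index}. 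For the kernel, $h=av+k$ lies in $N$ iff $aq=0$ and $k\in\Ker(Q|_{\mathsf H_\ell})$. So $\Ker(Q|_{\mathsf H_\ell})=N$.

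\emph{Case $q=0$.} Now $v\in\mathsf H_\ell$, and $Q(v,h)=0$ for all $h\in\mathsf H_\ell$, so $v$ lies in the radical of the restriction. Conversely, let $k\in\mathsf H_\ell$ satisfy $Q(k,\cdot)|_{\mathsf H_\ell}=0$. Since $Q(k,\cdot)$ vanishes on $\Ker\ell$, it equals $\lambda\ell$ for some scalar $\lambda$. Then $k+(\lambda/c)v\in N$, and this yields $N\oplus\Span\{v\}$; the sum is direct because $v\notin N$. For the index, pick $w\in\mathsf H$ with $\ell(w)=1$. Then $Q(v,w)=-c\neq0$ and $Q(v,v)=0$, so $\Span\{v,w\}$ is a hyperbolic plane with inertia $(1,1)$. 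Its $Q$-orthogonal complement $W=\{h:Q(h,v)=Q(h,w)=0\}$ lies inside $\mathsf H_\ell$. One then has $\mathsf H=\Span\{v,w\}\oplus W$, obtained by solving a nondegenerate $2\times2$ system for the coefficients. Next, $\mathsf H_\ell=\Span\{v\}\oplus W$, with $v$ radical there. By additivity, $n^-(Q)=1+n^-(Q|_W)$ and $n^-(Q|_{\mathsf H_\ell})=n^-(Q|_W)$. This gives the third line of \eqref{eq:one-constraint-index}.

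The main obstacle is justifying additivity of the Morse index over $Q$-orthogonal but possibly non-closed-complement decompositions in infinite dimensions. I would handle it using finiteness of $n^-(Q)$. Any negative subspace is finite-dimensional, so the comparison reduces to finite-dimensional subspaces containing the relevant vectors, where Sylvester's law applies. The decompositions used are all algebraically direct with one- or two-dimensional pieces, and the projections onto those pieces are bounded, given by $\ell$ and $Q(\cdot,v)$. So no closedness issues arise.
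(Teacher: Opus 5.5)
Your proposal is correct and follows essentially the same route as the paper. When $q\neq0$ you use the $Q$-orthogonal splitting $\mathsf H=\Span\{v\}\oplus\mathsf H_\ell$; when $q=0$ you use the hyperbolic plane $\Span\{v,w\}$ with $\mathsf H=\Span\{v,w\}\oplus W$ and $\mathsf H_\ell=\Span\{v\}\oplus W$. The differences are only cosmetic: you skip the paper's normalization making $w$ isotropic (unnecessary, since the Gram determinant is $-c^2$ either way), you identify the restricted radical directly via $Q(k,\cdot)=\lambda\ell$, and you spell out the Fredholm compression and Sylvester-additivity steps that the paper only sketches.
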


\begin{proof}
For $n\in N$, \eqref{eq:constraint-normal-identity} gives $\ell(n)=0$.
Suppose first that $q\neq0$.  Then
$\ell(v)=-q/c\neq0$, and
\[
 \mathsf H=\Span\{v\}\oplus\mathsf H_\ell.
\]
This is a $Q$-orthogonal direct sum because $Q(v,h)=0$ on
$\mathsf H_\ell$.  Sylvester additivity proves the first two cases of
\eqref{eq:one-constraint-index} and the first case of
\eqref{eq:one-constraint-kernel}.

Now let $q=0$.  Then $v\in\mathsf H_\ell$ and
$Q(v,h)=0$ for every $h\in\mathsf H_\ell$.  Choose $z$ with $\ell(z)=1$.
Since $Q(v,z)=-c$, replacing $z$ by
\[
 z+\frac{Q(z,z)}{2c}v
\]
makes $Q(z,z)=0$ without changing $\ell(z)$.  On
$E=\Span\{v,z\}$, the matrix of $Q$ is
\[
 \begin{pmatrix}0&-c\\-c&0\end{pmatrix},
\]
so $E$ is nondegenerate and has one positive and one negative direction.
Therefore
\[
 \mathsf H=E\oplus E^{\perp_Q}.
\]
For $w\in E^{\perp_Q}$,
$0=Q(v,w)=-c\ell(w)$, and hence
\[
 \mathsf H_\ell=\Span\{v\}\oplus E^{\perp_Q}.
\]
The form vanishes on the first summand.  Exactly one negative direction is
lost, and the restricted radical is $N\oplus\Span\{v\}$.  All these decompositions are topological: the projections onto the
finite-dimensional nondegenerate summands are bounded and are obtained
by inverting their Gram matrices.  Removing such a summand from a
Fredholm form leaves a Fredholm form, and adjoining the one-dimensional
zero block in the null case preserves Fredholmness.  This proves the
remaining assertions.
\end{proof}

\begin{remark}\label{rem:constraint-lemma-two-families}
In the fixed entropy--density family, the family tangent is the quadratic
normal to the mass hyperplane for the barotropic block $L_{11}$, but generally
not for the full form $Q_0$ because of the positive term involving $K$.  In
the fixed Lagrangian-entropy family, the variational identity proved later
shows that the tangent is the normal for the full reduced Hessian.  This is
the structural reason the two turning-point conclusions differ.  If the
coefficient $c$ in \eqref{eq:constraint-normal-identity} vanishes, the lemma
does not apply and that parameter must be treated separately.
\end{remark}

\section{A fixed entropy--density relation: existence and failure of the
turning-point principle}\label{sec:fixed-relation}

We fix a single entropy--density relation along the family:
\begin{equation}\label{eq:fixed-relation-family}
 S_\mu(r)=f(\rho_\mu(r)),\qquad \rho_\mu(0)=\mu,
\end{equation}
where $f$ is not assumed small.  Set
\begin{equation}\label{eq:fixed-relation-effective-pressure}
 \widetilde P_f(\rho)=P(f(\rho),\rho),\qquad
 \mathfrak G_f(\rho)=-\frac{P_S(f(\rho),\rho)f'(\rho)}{\rho}.
\end{equation}
The equilibrium equation is therefore the barotropic hydrostatic equation
with pressure $\widetilde P_f$.  The second term in the radial stability form,
however, still records that entropy is advected.  All indices in this and the
next section are radial.

The effective-compressibility assumption still restricts the slope:
\[
 0<-f'(\rho)<\frac{P_\rho(f(\rho),\rho)}{P_S(f(\rho),\rho)}.
\]
Thus absence of an entropy-amplitude smallness assumption does not mean
that every decreasing function $f$ is admissible.

\subsection{Existence for a fixed relation}

\begin{lemma}
\label{lem:fixed-relation-existence}
Assume $\mathrm{(P)}$.  Let
$f\in C^1([0,\bar\mu])\cap C^3((0,\bar\mu])$ satisfy
\begin{equation}\label{eq:fixed-relation-compressibility}
 f'(\rho)<0,\qquad \widetilde P_f'(\rho)>0
 \quad(0<\rho\leq\bar\mu).
\end{equation}
Then there is $\mu_0\in(0,\bar\mu]$ such that, for every
$0<\mu<\mu_0$, \eqref{eq:fixed-relation-family} has a unique radial
equilibrium up to a finite first vacuum radius $R_\mu$.  It is strictly
decreasing, has positive mass $M_\mu$, and has the physical-vacuum behavior
\eqref{eq:physical-vacuum}--\eqref{eq:physical-vacuum-derivative}; in
particular, it satisfies $\mathrm{(E_s)}$.  
\end{lemma}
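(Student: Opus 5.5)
Since $S_\mu=f(\rho_\mu)$, the equilibrium problem is barotropic with effective pressure $\widetilde P_f$, so the plan is to solve the Lane--Emden type initial value problem for the enthalpy and to read off the vacuum behaviour from \eqref{eq:uniform-pressure-asymptotics}. Define $h(\rho)=\int_0^\rho\widetilde P_f'(\tau)/\tau\ud\tau=\Psi'(\rho)$. By \eqref{eq:fixed-relation-compressibility} and (P), $\widetilde P_f'(\rho)\sim\gamma_0K(f(0))\rho^{\gamma_0-1}$ as $\rho\downarrow0$, so $h$ is finite, strictly increasing, $C^1$ on $[0,\bar\mu]$ and $C^2$ on $(0,\bar\mu]$, with $h(\rho)\asymp\rho^{\gamma_0-1}$. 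The hydrostatic system becomes
\[
 \Delta_r\theta=-4\pi h^{-1}(\theta_+),\qquad \theta(0)=h(\mu),\quad\theta'(0)=0,
\]
with $\rho=h^{-1}(\theta_+)$. I will solve it by a fixed point on $[0,\delta]$ for the integral form $\theta(r)=h(\mu)-\int_0^r s^{-2}\int_0^s 4\pi\tau^2h^{-1}(\theta(\tau))\ud\tau\ud s$. Here $h^{-1}$ is locally Lipschitz near $h(\mu)>0$, which gives uniqueness. The solution then continues as long as $\theta>0$.

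For monotonicity, note that $\theta'(r)=-m(r)/r^2<0$ for $r>0$. Hence $\rho$ is strictly decreasing while positive, and $\rho_\mu'=\theta'/h'(\rho)<0$.

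The key step is finiteness of the first vacuum radius, and this is where the restriction to small $\mu$ and $\gamma_0>6/5$ enter. My first attempt is a scaling and continuity argument. Set $\rho=\mu\,\varrho(\lambda r)$ with $\lambda^2=4\pi\mu/h(\mu)\asymp\mu^{2-\gamma_0}$. The rescaled equation is $\Delta\vartheta=-h^{-1}(h(\mu)\vartheta)/\mu$, and its nonlinearity converges to $\vartheta_+^{1/(\gamma_0-1)}$ locally uniformly as $\mu\to0$, because $h(\rho)/\rho^{\gamma_0-1}\to$ const. The limit is the Lane--Emden equation with index $n=1/(\gamma_0-1)<5$, whose solution has a finite first zero $\xi_1$ with $\vartheta'(\xi_1)<0$. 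Continuous dependence on the nonlinearity then gives a transversal first zero for all $\mu<\mu_0$. This also gives $R_\mu<\infty$ and $M_\mu>0$.

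Transversality gives $\theta(r)\asymp R_\mu-r$ near $R_\mu$, so the vacuum bounds in \cref{lem:physical-vacuum} follow from $h(\rho)\asymp\rho^{\gamma_0-1}$ and $h'(\rho)\asymp\rho^{\gamma_0-2}$. It then remains to check (E$_s$). The regularity $\rho_\mu\in C^2([0,R))\cap C^1([0,R])$ comes from the ODE. The relation $S=f(\rho)$ holds with the assumed regularity of $f$. Positivity of the effective pressure derivative is assumed, and $\mathfrak G>0$ in the interior follows from $f'<0$ and $P_S>0$. Near vacuum, $\mathfrak G\le C\rho^{\gamma_0-1}$ follows from $|P_S|\le C\rho^{\gamma_0}$ (from (P) with $j=1,k=0$) and boundedness of $f'$. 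The limit in \eqref{eq:weight-comparability} holds because $f\in C^1$ up to $0$. Finally, $\mathfrak G^{-1}\in L^1_{\rm loc}$ since $\mathfrak G$ is continuous and positive on compact shells.

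The main obstacle is the convergence step. The nonlinearity is only Hölder at $\vartheta=0$, and uniform control of the first zero needs the limiting Lane--Emden zero to be transversal. That transversality holds because $\vartheta'(\xi_1)=-m/\xi_1^2<0$, which makes the continuity argument for the zero a standard implicit-function argument.
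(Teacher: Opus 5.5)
Your proposal is correct and follows the paper's proof. It uses the same enthalpy initial-value problem with $h=H_f$, uniqueness from local Lipschitz continuity away from vacuum, monotonicity from $\theta'=-m/r^2<0$, the physical-vacuum bounds from $\theta\asymp R_\mu-r$ together with $h\asymp\rho^{\gamma_0-1}$ and $h'\asymp\rho^{\gamma_0-2}$, and the same coefficient checks for $\mathrm{(E_s)}$. The only difference is that the paper gets a finite first vacuum for small $\mu$ by citing the low-density existence lemma of Lin and Zeng, whereas you prove it directly via the Lane--Emden scaling limit with index $1/(\gamma_0-1)\in(1,5)$, a valid self-contained substitute; transversality at the zero is automatic anyway, since $-\theta'(R_\mu)=M_\mu/R_\mu^2>0$.

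One harmless slip: $h$ is not $C^1$ at $\rho=0$, because $h'(\rho)\asymp\rho^{\gamma_0-2}\to\infty$, as you use yourself later. This is never needed, since only the regularity of $h^{-1}$ and the two asymptotic relations enter the argument.
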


\begin{proof}
The pressure asymptotics in $\mathrm{(P)}$ and the boundedness of $f'$ near
zero give
\begin{equation}\label{eq:fixed-relation-pressure-asymptotics}
 \widetilde P_f'(\rho)
 =\gamma_0K(f(0))\rho^{\gamma_0-1}(1+o(1))
 \qquad(\rho\downarrow0).
\end{equation}
Indeed, the $P_\rho$ term has the displayed leading part, whereas
$P_S(f(\rho),\rho)f'(\rho)=O(\rho^{\gamma_0})$.
Thus $\widetilde P_f$ satisfies the small-density
barotropic hypotheses \eqref{eq:fixed-relation-compressibility}
of the low-density existence lemma in \cite{LinZeng2022} (Lemma~3.1 of its author version), which gives a finite-radius solution for
every sufficiently small central density.

For clarity, the radial initial-value problem used there can be written
directly.  Define
\begin{equation}\label{eq:fixed-relation-enthalpy}
 H_f(\rho)=\int_0^\rho\frac{\widetilde P_f'(q)}q\ud q,
 \qquad F_f=H_f^{-1}.
\end{equation}
While $y=H_f(\rho)>0$,
\begin{equation}\label{eq:fixed-relation-enthalpy-system}
 m(r)=4\pi\int_0^r s^2F_f(y(s))\ud s,
 \qquad
 y(r)=H_f(\mu)-\int_0^r\frac{m(s)}{s^2}\ud s.
\end{equation}
The system \eqref{eq:fixed-relation-enthalpy-system} is
regular at the center, since $m(r)=O(r^3)$, and its local
solution is unique.  Moreover, $y'=-m/r^2<0$ away from the center, so the
first zero supplied by the cited lemma is the first vacuum and the density is
strictly decreasing.  Finally,
\eqref{eq:fixed-relation-pressure-asymptotics} gives
\[
 H_f(\rho)\asymp\rho^{\gamma_0-1},\qquad
 H_f'(\rho)\asymp\rho^{\gamma_0-2}.
\]
At the first zero $R_\mu$,
$-y'(r)=m(r)/r^2\to M_\mu/R_\mu^2>0$.  Hence
$y(r)\asymp R_\mu-r$, and inversion of $H_f$ gives
\eqref{eq:physical-vacuum}.  The identity
$\rho'=y'/H_f'(\rho)$ with $H_f$ from
\eqref{eq:fixed-relation-enthalpy} then gives
\eqref{eq:physical-vacuum-derivative} and the boundary regularity in
\eqref{eq:equilibrium-density-regularity}.  Moreover,
$P_S(f(\rho),\rho)=O(\rho^{\gamma_0})$ and boundedness of $f'$ give
$0<\mathfrak G_f\leq C\rho^{\gamma_0-1}$; on compact interior density
intervals its reciprocal is integrable.  Thus all conditions in
$\mathrm{(E_s)}$ hold.
\end{proof}

The preceding lemma supplies a nonempty branch without any entropy-amplitude
restriction.  A mass maximum need not lie in its small-central-density part.
Accordingly, for the turning-point statement we take any compact continuation
$I=[\mu_-,\mu_+]$ on which the first vacuum remains finite and the family is
$C^2$ and satisfies $\mathrm{(E_s)}$ and $\mathrm{(F)}$.  These are explicit
branch hypotheses, not smallness assumptions.  The fixed-endpoint device used
to verify parameter regularity is still available: after the first crossing
of a fixed enthalpy level $y_0>0$, use $y$ as independent variable and solve
\begin{equation}\label{eq:fixed-relation-fixed-enthalpy-system}
 \frac{\ud r}{\ud y}=-\frac{r^2}{m},\qquad
 \frac{\ud m}{\ud y}=-\frac{4\pi r^4}{m}F_f(y),
 \qquad 0\leq y\leq y_0.
\end{equation}
Because $r$ and $m$ are bounded away from zero on this surface interval,
ordinary parameter dependence for
\eqref{eq:fixed-relation-fixed-enthalpy-system} is taken on a fixed
domain; no differentiation
of a zero extension across the moving vacuum is involved.

\subsection{The stability form and its family tangent}

For a family we use $H_\mu$ for the mass-zero space denoted by
$\mathsf H_0$ for a fixed star.  For a fixed $f$, put
\begin{equation}\label{eq:fixed-relation-mass-space}
 \ell_\mu(\eta)=\int_{B_{R_\mu}}\eta\ud x,\qquad
 H_\mu=\Ker\ell_\mu,\qquad v_\mu=\partial_\mu\rho_\mu.
\end{equation}
By \cref{thm:radial-index},
\begin{equation}\label{eq:fixed-relation-Q}
 n^u_{\rm rad}(\mu)=n^-(\cQ_\mu|_{H_\mu}),\qquad
 \cQ_\mu=L_\mu+E_\mu,\qquad
 E_\mu[\eta]=\int\mathfrak G_\mu|(\Id-T_\mu)\eta|^2\ud x,
\end{equation}
where
\begin{equation}\label{eq:fixed-relation-L}
 \begin{aligned}
 L_\mu[\eta]
 &=\int\frac{\widetilde P_f'(\rho_\mu)}{\rho_\mu}\eta^2\ud x
   -4\pi\langle(-\Delta)^{-1}\eta,\eta\rangle,\\
 T_\mu\eta
 &=\frac{\rho_\mu'}{4\pi r^2\rho_\mu}I_\eta,\qquad
 I_\eta(r)=4\pi\int_0^r s^2\eta(s)\ud s.
 \end{aligned}
\end{equation}
The effective potential $\Psi_f$ is defined by
$\Psi_f''=\widetilde P_f'/\rho$.  Along the branch,
\begin{equation}\label{eq:fixed-relation-tangent-identity}
 \Psi_f'(\rho_\mu)+V_\mu+c_\mu=0,\qquad
 L_\mu(v_\mu,\eta)=-c_\mu'\ell_\mu(\eta),\qquad
 \ell_\mu(v_\mu)=M_\mu',\qquad c_\mu=\frac{M_\mu}{R_\mu}.
\end{equation}
With the notation of \eqref{eq:fixed-relation-mass-space}, the
identity follows by differentiating the steady equation in the positive
interior and then passing to the weighted spaces using $\mathrm{(F)}$.  The
moving endpoint contributes nothing because $\rho_\mu(R_\mu)=0$.  Crucially,
the normal identity is for $L_\mu$, not for the full form $\cQ_\mu$.

\subsection{Failure of the turning-point principle}

\begin{theorem}[A mass extremum is not a stability transition]
\label{thm:fixed-relation-no-transition}
Let \eqref{eq:fixed-relation-family} be a $C^2$ compact branch satisfying
$\mathrm{(P)}$, $\mathrm{(E_s)}$, and $\mathrm{(F)}$, with the same fixed
function $f$ for every $\mu$.  Suppose $\mu_*\in\operatorname{int}I$ is a
simple mass extremum and
\begin{equation}\label{eq:fixed-relation-extremum-hypotheses}
 M_{\mu_*}'=0,\qquad M_{\mu_*}''\ne0,\qquad
 c_{\mu_*}'\ne0,\qquad
 n^-(L_{\mu_*})=1,\qquad \Ker L_{\mu_*}=\{0\}.
\end{equation}
Then $\cQ_{\mu_*}|_{H_{\mu_*}}$ is coercive.  It remains coercive for
$\mu$ in a neighborhood of $\mu_*$; hence there is no growing radial mode
there and $\mu_*$ is not a radial stability transition.  In particular, if
$\mu_*$ is the first simple mass extremum of the branch, the first
turning-point principle fails.  
\end{theorem}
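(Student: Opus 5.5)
The plan is to treat $\cQ_{\mu_*}=L_{\mu_*}+E_{\mu_*}$ as a perturbation of the barotropic block and to apply \cref{lem:one-constraint-inertia} to $L_{\mu_*}$ on the full radial space. Here $L_{\mu_*}$ is bounded and Fredholm; this follows from \cref{lem:potential-compact} applied to $\widetilde P_f$. The tangent identity \eqref{eq:fixed-relation-tangent-identity} gives
\[
 L_{\mu_*}(v_{\mu_*},h)=-c'_{\mu_*}\ell_{\mu_*}(h),
\]
and $c'_{\mu_*}\neq0$, so the lemma applies with $c=c'_{\mu_*}$. At the extremum,
\[
 q=-c'_{\mu_*}\ell_{\mu_*}(v_{\mu_*})=-c'_{\mu_*}M'_{\mu_*}=0.
\]
This is the null case. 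Because $n^-(L_{\mu_*})=1$ and $\Ker L_{\mu_*}=\{0\}$, the lemma yields two facts on $H_{\mu_*}$. First, $n^-(L_{\mu_*}|_{H_{\mu_*}})=0$. Second, the radical of the restricted form is exactly $\Span\{v_{\mu_*}\}$. Note that $v_{\mu_*}\in H_{\mu_*}$, since $\ell(v)=M'=0$.

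Next I add the Schwarzschild term. We have $E_{\mu_*}\geq0$, so $\cQ_{\mu_*}|_{H_{\mu_*}}\geq L_{\mu_*}|_{H_{\mu_*}}\geq0$. To upgrade this to coercivity, I use the Fredholm structure from \cref{lem:Q0-Fredholm}: a nonnegative bounded symmetric Fredholm form (identity plus a compact perturbation, with the extra $K^*K$ term) is coercive exactly when its radical is trivial. Suppose $h$ lies in the radical of $\cQ_{\mu_*}|_{H_{\mu_*}}$. Then $0=\cQ[h]=L[h]+E[h]$ with both terms nonnegative, so $L[h]=0$ and $E[h]=0$. A vanishing $L[h]$ for a nonnegative form places $h$ in its radical, hence $h=tv_{\mu_*}$. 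It then remains to show $E_{\mu_*}[v_{\mu_*}]>0$, which is equivalent to $K_{\mu_*}v_{\mu_*}\neq0$ on a set of positive measure. Since $\mathfrak G_{\mu_*}>0$ in the interior by \eqref{eq:G-strict-interior}, this forces $t=0$.

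This positivity step is the main point, and I would attack it by contradiction. Suppose $Kv=0$, that is, $v=Tv$. In radial form, with $I=I_v$, this says
\[
 \frac{I'}{4\pi r^2}=\frac{\rho_{\mu_*}'}{4\pi r^2\rho_{\mu_*}}I,
 \qquad\text{that is,}\qquad
 (I/\rho_{\mu_*})'=0,
\]
so $I=C\rho_{\mu_*}$ and $v=C\rho_{\mu_*}'/(4\pi r^2)\cdot 4\pi r^2/(4\pi r^2)$. More cleanly, $I_v=C\rho_{\mu_*}$. At the center $I_v(0)=0$ while $\rho_{\mu_*}(0)=\mu_*>0$, so $C=0$. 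Then $I_v\equiv0$ gives $v_{\mu_*}=0$. This contradicts $v_{\mu_*}\neq0$: $v$ cannot vanish, because $v_{\mu_*}(0)=\partial_\mu\rho_\mu(0)=1$. If one prefers to avoid pointwise values, $v\neq0$ also follows from $L(v,\cdot)=-c'\ell\neq0$. The argument is justified in the interior through (F) and the local absolute continuity of $I$.

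Finally, for stability of the conclusion near $\mu_*$ I pull back with $\mathcal U_\mu$, using (F). The forms $\cQ_\mu$ are then norm continuous on a fixed space. The constraint functionals $\ell_\mu$ pull back to the fixed mass functional, since the pullback is mass-preserving, so the spaces $H_\mu$ become one fixed closed subspace. Coercivity is an open condition under norm-continuous perturbation, so it persists for $\mu$ near $\mu_*$. On such a neighborhood, \cref{thm:radial-index} gives $n^u_{\rm rad}=0$, and there is no kernel. Hence the conditions of \cref{def:TPP} (one-dimensional kernel at $\mu_*$, change of index) fail, and $\mu_*$ is not a transition. If $\mu_*$ is the first simple extremum, the first radial transition cannot occur there, so the first turning-point principle fails.
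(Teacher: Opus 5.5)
Your proposal is correct and follows the paper's proof essentially step for step: the null case $q=-c'_{\mu_*}M'_{\mu_*}=0$ of \cref{lem:one-constraint-inertia} for $L_{\mu_*}$, then strict positivity of $E_{\mu_*}[v_{\mu_*}]$ via $(I_{v}/\rho_{\mu_*})'=0$ and the center value, then the Fredholm upgrade from nonnegativity with trivial radical to coercivity, and finally persistence under the norm-continuous pulled-back forms of $\mathrm{(F)}$. Your extra remarks are correct and make explicit points the paper leaves implicit: that $v_{\mu_*}\neq0$ also follows from $L_{\mu_*}(v_{\mu_*},\cdot)=-c'_{\mu_*}\ell_{\mu_*}\neq0$, and that the mass-preserving pullback $\mathcal U_\mu$ sends every $H_\mu$ to one fixed subspace; just delete the garbled intermediate formula for $v$ before ``More cleanly''.
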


\begin{proof}
At $\mu_*$, \eqref{eq:fixed-relation-tangent-identity} has the form required
by \cref{lem:one-constraint-inertia}, with $L_{\mu_*}$ from
\eqref{eq:fixed-relation-L} and the normal value \eqref{eq:q-normal} given
by
\[
 L_{\mu_*}[v_{\mu_*}]=-c_{\mu_*}'M_{\mu_*}'=0.
\]
The last two assumptions in
\eqref{eq:fixed-relation-extremum-hypotheses} therefore imply
\begin{equation}\label{eq:fixed-relation-constrained-L-at-extremum}
 L_{\mu_*}|_{H_{\mu_*}}\geq0,\qquad
 \Ker(L_{\mu_*}|_{H_{\mu_*}})=\Span\{v_{\mu_*}\}.
\end{equation}
Here the kernel is the radical of the restricted form.

The Schwarzschild term is strictly positive on this one-dimensional kernel.
Indeed, if $E_{\mu_*}[v_{\mu_*}]=0$, then
$\mathfrak G_{\mu_*}>0$ almost everywhere and
$(\Id-T_{\mu_*})v_{\mu_*}=0$.  With $J=I_{v_{\mu_*}}$, this gives
\[
 J'=\frac{\rho_{\mu_*}'}{\rho_{\mu_*}}J,
 \qquad (J/\rho_{\mu_*})'=0.
\]
Since $J(0)=0$ and $\rho_{\mu_*}(0)=\mu_*>0$, one obtains $J=0$ and then
$v_{\mu_*}=0$, contradicting
$v_{\mu_*}(0)=\partial_\mu\rho_\mu(0)|_{\mu=\mu_*}=1$.
Thus $E_{\mu_*}[v_{\mu_*}]>0$.

Both summands of $\cQ_{\mu_*}=L_{\mu_*}+E_{\mu_*}$ are nonnegative on
$H_{\mu_*}$.  If $\cQ_{\mu_*}[\eta]=0$, then
\eqref{eq:fixed-relation-constrained-L-at-extremum} yields
$\eta\in\Span\{v_{\mu_*}\}$, while the strict inequality just proved forces
$\eta=0$.  Hence the restricted form is nonnegative and has trivial kernel.
By $\mathrm{(F)}$ it is coercive plus compact, so the Fredholm alternative
upgrades this to strict coercivity.  Norm continuity on the fixed pulled-back
space preserves that coercivity for all nearby $\mu$.  The radial index
formula \eqref{eq:fixed-relation-Q} completes the proof.
\end{proof}

\begin{remark}[Scope of the fixed-relation conclusion]
\label{rem:fixed-relation-scope}
The theorem is local to a continued branch satisfying
\eqref{eq:fixed-relation-extremum-hypotheses}.  Low-central-density
existence does not itself supply such an extremum, and the proof does not
assert that instability occurs later on the branch.  Its lifting argument
is more general: on any mass-zero space where $L\geq0$, the relation
$(\Id-T)\eta=0$ gives $I_\eta=C\rho$ and then $C=0$ at the center.
Consequently the positive entropy term removes every possible null
direction of $L$; the Fredholm property upgrades positivity of $L+E$
to coercivity.
\end{remark}

\section{Fixed entropy distribution: existence and the turning-point principle}
\label{sec:fixed-lagrangian}

This second construction is perturbative.  In addition to $\mathrm{(P)}$, assume
\begin{equation}\label{eq:lagrangian-factorized-pressure}
 \begin{gathered}
 P(S,\rho)=e^Sp(\rho),\qquad
 p(\rho)=\rho^{\gamma_0}a(\rho),\\
 a\in C^4([0,\mu_++1]),\qquad
 \inf a>0,\qquad p'>0,\qquad
 \alpha=(\gamma_0-1)^{-1}>1.
 \end{gathered}
\end{equation}
Fix $I=[\mu_-,\mu_+]\Subset(0,\infty)$ on a finite-radius isentropic
branch, defined on a neighborhood of $I$, whose enthalpy has a transverse
first zero.  Assume
\begin{equation}\label{eq:lagrangian-comparison-index-package}
 c_{\mu,0}'=\left(\frac{M_{\mu,0}}{R_{\mu,0}}\right)'\geq c_0>0,
 \qquad n^-(L_{\mu,0})=1,\qquad \Ker L_{\mu,0}=\{0\},
\end{equation}
and that the reference mass has one simple maximum
$\mu_*\in(\mu_-,\mu_+)$:
\begin{equation}\label{eq:lagrangian-comparison-mass-maximum}
 M_{\mu,0}'>0\ (\mu<\mu_*),\quad
 M_{\mu_*,0}'=0,\quad M_{\mu_*,0}''<0,\quad
 M_{\mu,0}'<0\ (\mu>\mu_*).
\end{equation}
These are explicit hypotheses on the comparison branch.  Set
\begin{equation}\label{eq:lagrangian-reference-enthalpy}
 H_0(\rho)=\int_0^\rho\frac{p'(q)}q\ud q,
 \qquad F_0=H_0^{-1},
 \qquad F_0(y)=y^\alpha G_0(y^\alpha),
\end{equation}
where $G_0$ is positive and has the regularity inherited from
\eqref{eq:lagrangian-factorized-pressure}.

Entropy is fixed as a function of physical enclosed mass:
\begin{equation}\label{eq:lagrangian-entropy-family}
 S_{\mu,\kappa}(r)=\sigma(m_{\mu,\kappa}(r)),\qquad
 \sigma(m)=\kappa\overline S(m),\qquad
 m_\rho(r)=4\pi\int_0^r s^2\rho(s)\ud s .
\end{equation}
Take $\overline S\in C^4([0,M_++1])$ with $\overline S'>0$, where
$M_+>\sup_{\mu\in I}M_{\mu,0}$.  The parameter $\kappa>0$ is fixed along
each central-density family.

\subsection{Existence with a fixed entropy distribution}

Hydrostatic balance has the correctly normalized form
\begin{equation}\label{eq:lagrangian-equilibrium-ode}
 m'=4\pi r^2\rho,\qquad
 P_\rho\rho'+4\pi r^2\rho P_S\sigma'(m)=-\rho\frac{m}{r^2},
 \qquad \rho(0)=\mu,\quad m(0)=0.
\end{equation}
Equivalently, where $m>0$ is a coordinate,
\[
 \frac{\ud P}{\ud m}=-\frac{m}{4\pi r^4},\qquad
 \frac{\ud(r^3)}{\ud m}=\frac{3}{4\pi\rho}.
\]

\begin{lemma}[Existence for a fixed entropy distribution]
\label{lem:lagrangian-family-existence}
For all sufficiently small $\kappa\geq0$, \eqref{eq:lagrangian-equilibrium-ode}
has a unique family near the reference branch up to a finite first vacuum.
Its mass and radius are jointly $C^2$ in $(\mu,\kappa)$ and, uniformly on
$I$,
\begin{align}\label{eq:lagrangian-curve-convergence}
 (R_{\cdot,\kappa},M_{\cdot,\kappa})
 =(R_{\cdot,0},M_{\cdot,0})+O(\kappa)\quad\hbox{in }C^1(I),
\end{align}
\begin{align}\label{eq:lagrangian-second-curve-convergence}
 (R_{\cdot,\kappa},M_{\cdot,\kappa})
 \longrightarrow(R_{\cdot,0},M_{\cdot,0})\quad\hbox{in }C^2(I).
\end{align}
Writing
$\widehat\rho_{\mu,\kappa}(s)=\rho_{\mu,\kappa}(R_{\mu,\kappa}s)$,
\begin{align}\label{eq:lagrangian-rescaled-density-ratio}
 \widehat\rho_{\mu,\kappa}(s)=(1-s)^\alpha c_{\rho,\mu,\kappa}(s),
 \qquad \inf_{\mu,\kappa,s}c_{\rho,\mu,\kappa}(s)>0.
\end{align}
The positive factors are continuous in $(\mu,\kappa)$ in
$W^{1,\infty}(0,1)$ and differ from their $\kappa=0$ values by
$O_I(\kappa)$ there.  The zero-extended tangents belong to the density spaces.
For $\kappa>0$ the equilibria satisfy $\mathrm{(E_s)}$.
\end{lemma}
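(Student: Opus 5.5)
We treat $\kappa$ as a regular perturbation parameter in a formulation where the vacuum boundary sits on a fixed domain. The structure parallels \cref{lem:fixed-relation-existence}.

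\emph{Interior problem.} Rewrite \eqref{eq:lagrangian-equilibrium-ode} in enthalpy form. Since $P=e^{\sigma(m)}p(\rho)$, the hydrostatic equation reads
\[
 e^{\sigma(m)}\Bigl(p'(\rho)\rho'+p(\rho)\sigma'(m)4\pi r^2\rho\Bigr)=-\rho m/r^2 .
\]
With $y=H_0(\rho)$ this becomes
\[
 y'=-e^{-\sigma(m)}\frac{m}{r^2}-\kappa\,\overline S'(m)\,4\pi r^2\,\frac{p(F_0(y))}{F_0(y)}, \qquad m'=4\pi r^2F_0(y).
\]
The extra term is $O(\kappa)$ and is regular, because $p(\rho)/\rho=O(\rho^{\gamma_0-1})$. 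Near the center, write the system as an integral equation as in \eqref{eq:fixed-relation-enthalpy-system}; there $m=O(r^3)$. A contraction argument on a small ball gives local existence and uniqueness, with $C^2$ dependence on $(\mu,\kappa)$ since $\overline S\in C^4$ and $a\in C^4$. On compact intervals where $y$ is bounded below, standard ODE theory then gives $C^2$ dependence on $(\mu,\kappa)$ up to any fixed positive enthalpy level $y_0$. These solutions are $O(\kappa)$-close to the isentropic ones.

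\emph{Surface problem.} As in \eqref{eq:fixed-relation-fixed-enthalpy-system}, after the first crossing of a fixed level $y_0$ we use $y$ as the independent variable and solve for $r(y)$ and $m(y)$ on $[0,y_0]$:
\[
 \frac{\ud r}{\ud y}=\Bigl(-e^{-\sigma(m)}\frac{m}{r^2}-\kappa\overline S'(m)4\pi r^2\frac{p(F_0(y))}{F_0(y)}\Bigr)^{-1}, \qquad \frac{\ud m}{\ud y}=4\pi r^2F_0(y)\frac{\ud r}{\ud y}.
\]
Since $r$ and $m$ are bounded below on this interval, the denominator is bounded away from zero uniformly for small $\kappa$. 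The right-hand side is continuous in $y$ up to $y=0$, because $F_0(y)=y^\alpha G_0(y^\alpha)$ with $\alpha>1$. It is also $C^2$ in $(r,m,\kappa)$.

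The main obstacle is that the $y$-regularity near $0$ is only H\"older-type, through $y^\alpha$. I would handle it by treating $y$ as a parameter-free variable: we differentiate only in $(\mu,\kappa,r,m)$, never in $y$, so ordinary parameter dependence on the fixed interval $[0,y_0]$ applies. This gives $R=r(0)$ and $M=m(0)$ as $C^2$ functions of $(\mu,\kappa)$, which yields \eqref{eq:lagrangian-second-curve-convergence}. The $O(\kappa)$ difference in $C^1$, \eqref{eq:lagrangian-curve-convergence}, follows from the mean value theorem in $\kappa$ applied to the $C^2$ map. The vacuum is transverse for $\kappa=0$ by hypothesis and $\ud y/\ud r\to -e^{-\sigma(M)}M/R^2<0$, so the first zero persists for small $\kappa$ and is unique; this is also where uniqueness of the family near the reference branch comes from.

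\emph{Boundary factor and remaining properties.} For the factorization \eqref{eq:lagrangian-rescaled-density-ratio}, write $y=(R-r)\,b(r)$ with
\[
 b(r)=\int_0^1 (-y')\bigl(r+\theta(R-r)\bigr)\ud\theta ,
\]
so that $b$ is positive and Lipschitz, uniformly in the parameters. Then $\widehat\rho=F_0(y)=(1-s)^\alpha R^\alpha b^\alpha G_0(\cdot)$. Hence $c_\rho$ is positive and in $W^{1,\infty}$, and it depends continuously on the parameters, with an $O(\kappa)$ difference inherited from the $C^1$ closeness of $y'$.

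Membership of the zero-extended tangents $\partial_\mu\rho$ in the density spaces follows from differentiating $\rho=F_0(y)$. This gives $|\partial_\mu\rho|\lesssim y^{\alpha-1}\asymp d^{\alpha-1}$, where $d=R-r$. Against the weight $w_1\asymp d^{-(\alpha-1)}$ the relevant integrand is $d^{\alpha-1}$, which is integrable.

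Finally, for $\kappa>0$ we verify $\mathrm{(E_s)}$. Since $S_0=\kappa\overline S(m)$ and $m$ is a strictly decreasing function of $\rho_0$, we get $f'=\kappa\overline S'(m)\,(\ud m/\ud\rho)<0$. The effective-compressibility inequality \eqref{eq:effective-pressure-positive} holds for small $\kappa$ by perturbation; near vacuum this uses $P_S=O(\rho^{\gamma_0})$ together with the fact that $\ud m/\ud\rho=4\pi r^2\rho/\rho'$ is of order $d$. The same estimates give $\mathfrak G\lesssim\rho_0^{\gamma_0-1}$ and \eqref{eq:weight-comparability}. Interior positivity of $\mathfrak G$ yields $\mathfrak G^{-1}\in L^1_{\mathrm{loc}}$.
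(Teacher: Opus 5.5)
Your overall route matches the paper's. You use the enthalpy variable $y=H_0(\rho)$ and a Volterra contraction at the center. Below a fixed level $y_0$ you make $y$ the independent variable, so only $(r,m,\kappa)$-derivatives are taken and $F_0''(0)$ is never needed. You then factor $\widehat y=(1-s)b$ with $b$ an averaged derivative, and bound the tangent by $v=F_0'(y)y_\mu=O(d^{\alpha-1})$.

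There is an algebra slip to correct. Dividing your hydrostatic identity by $e^{\sigma}\rho$ gives $y'=-e^{-\sigma(m)}m/r^2-4\pi\kappa r^2\overline S'(m)\,p(F_0(y))$, with no factor $1/F_0(y)$. This is harmless, because $\frac{\ud}{\ud y}p(F_0(y))=F_0(y)$ is bounded and Lipschitz, which is what control of $\widehat y$ in $W^{2,\infty}$ needs. That control, and hence the $O_I(\kappa)$ bound for $c_\rho$ in $W^{1,\infty}$, is asserted in your sketch rather than derived. It follows by writing $y''=-\frac{\ud}{\ud r}D_\kappa(r,m,y)$ and using Lipschitz dependence of the coefficients. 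For the tangent, you should also record that $y_\mu=-r_\mu/r_y$ is bounded at fixed physical radius.

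The genuine gap is at the center, in your check of $\mathrm{(E_s)}$. Your strict sign $f'=\kappa\overline S'(m)\,\ud m/\ud\rho<0$ holds only for $0<r<R$. From the ODE, $\rho'=-\frac{4\pi\mu^2}{3P_{\rho,c}}r+O(r^2)$, where $P_{\rho,c}=P_\rho(\kappa\overline S(0),\mu)$. Hence $\ud m/\ud\rho=4\pi r^2\rho/\rho'\to0$, so $f'(\mu)=0$ and $\mathfrak G(0)=0$.

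The origin is an interior point of $B_R$. The condition $\mathfrak G^{-1}\in L^1_{\rm loc}(B_R)$ is exactly what is used near the center, for instance in the closedness of $B$. So ``interior positivity of $\mathfrak G$'' does not give it.

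What you need is the vanishing rate. With $P_{S,c}=P_S(\kappa\overline S(0),\mu)$, the same expansion gives $\mathfrak G=\frac{3\kappa P_{S,c}P_{\rho,c}\overline S'(0)}{\mu^2}r+O_I(\kappa r^2)\asymp\kappa r$. Therefore $\int_{|x|<\delta}\mathfrak G^{-1}\ud x\lesssim\kappa^{-1}\int_0^\delta r\ud r<\infty$.

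This expansion, together with your bound $|f'|\lesssim\kappa d$ at vacuum, also gives the endpoint condition $f\in C^1([0,\mu])$, which you did not address. Finally, effective compressibility needs no perturbation argument. For a decreasing equilibrium, $\widetilde P'(\rho_0)\rho_0'=-\rho_0m/r^2$ already forces $\widetilde P'>0$.
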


\begin{proof}
Use $y=H_0(\rho)$ and $F_0$ from
\eqref{eq:lagrangian-reference-enthalpy}.  Then
\begin{equation}\label{eq:lagrangian-enthalpy-ode}
 m'=4\pi r^2F_0(y),\qquad
 y'=-D_\kappa(r,m,y),\qquad
 D_\kappa=e^{-\kappa\overline S(m)}\frac{m}{r^2}
       +4\pi\kappa r^2p(F_0(y))\overline S'(m).
\end{equation}
The center is handled by the Volterra system
\begin{equation}\label{eq:lagrangian-center-Volterra}
 \begin{split}
 m(r)&=4\pi\int_0^r t^2F_0(y(t))\ud t,\\
 y(r)&=H_0(\mu)-\int_0^r D_\kappa(t,m(t),y(t))\ud t.
 \end{split}
\end{equation}
On a fixed small interval, uniformly for $\mu\in I$, $y$ stays in a
compact positive enthalpy interval.  Substitution of the first equation
into the second gives a contraction on continuous $y$: differences in
$m$ are bounded by $Cr^3\|\delta y\|_\infty$, and integration of the
$m/r^2$ term contributes $Cr^2\|\delta y\|_\infty$.  The remaining term
contributes $C|\kappa|r^3\|\delta y\|_\infty$.
The contraction and its first two differentiated equations have uniform
bounds, since $F_0$ is smooth on this positive interval.  Thus the center
solution is jointly $C^2$ in $(\mu,\kappa)$, with $m=O(r^3)$.
Ordinary interior continuation preserves this regularity while $y>0$.

We next make the passage to the surface on a fixed domain.  Choose a
single $y_0>0$ smaller than every $H_0(\mu)$, sufficiently small that
the reference crossings $y(r_e;\mu,0)=y_0$ lie in surface intervals on
which $r,m,D_0=m/r^2$ have uniform positive lower bounds.  Such a choice
is possible by compactness of $I$ and transversality of the first zeros.
At this positive level $y_r=-D_\kappa$ stays bounded away from zero.
The implicit-function theorem gives jointly $C^2$ entrance data
$r_e(\mu,\kappa)$ and $m_e(\mu,\kappa)=m(r_e;\mu,\kappa)$.
For parameter indices $i,j\in\{\mu,\kappa\}$ the entrance derivatives are
\begin{equation}\label{eq:lagrangian-entrance-derivatives}
 (r_e)_i=-\frac{y_i}{y_r},\qquad
 (r_e)_{ij}=-\frac{y_{ij}+y_{ri}(r_e)_j+y_{rj}(r_e)_i
                      +y_{rr}(r_e)_i(r_e)_j}{y_r},
\end{equation}
evaluated at the crossing.  The corresponding formulas for $m_e$ follow
by the chain rule.  In particular, all entrance derivatives are uniformly
bounded.

Use $y$ as the independent variable thereafter:
\begin{equation}\label{eq:fixed-enthalpy-lagrangian-ode}
 r_y=-D_\kappa^{-1},\qquad
 m_y=-4\pi r^2F_0(y)D_\kappa^{-1},\qquad 0\leq y\leq y_0.
\end{equation}
Write this as $Z_y=\mathcal F(y,Z,\kappa)$, $Z=(r,m)$, with
$Z(y_0)=(r_e,m_e)$.  In a uniform neighborhood of the reference solutions,
$r,m,D_\kappa$ are bounded above and below by positive constants.
The function $\mathcal F$ is continuous in $y\in[0,y_0]$ and has bounded
continuous derivatives through order two in $(Z,\kappa)$ there.
With explicit parameter derivatives of $\mathcal F$ denoted by subscripts,
the differentiated integral equations are
\begin{align}
 Z_i(y)&=(Z_e)_i+\int_{y_0}^y
          (\mathcal F_Z Z_i+\mathcal F_i)(t)\ud t,
          \label{eq:lagrangian-first-parameter-system}\\
 Z_{ij}(y)&=(Z_e)_{ij}+\int_{y_0}^y
  \bigl(\mathcal F_Z Z_{ij}+\mathcal F_{ZZ}[Z_i,Z_j]
       +\mathcal F_{Zi}Z_j+\mathcal F_{Zj}Z_i+\mathcal F_{ij}\bigr)(t)\ud t.
          \label{eq:lagrangian-second-parameter-system}
\end{align}
Here $\mathcal F_\mu=0$; dependence on $\mu$ enters through the entrance
data.  Gronwall's inequality gives uniform bounds and continuous
parameter dependence for these equations up to $y=0$.  The first vacuum
radius and mass are $R= r(0)$ and $M=m(0)$.  This proves joint $C^2$
regularity at the moving surface without requiring $F_0''(0)$ to exist.
Bounded mixed derivatives give the $O(\kappa)$ estimate in $C^1(I)$;
uniform continuity of the second derivatives gives convergence in $C^2(I)$.
These are \eqref{eq:lagrangian-curve-convergence} and
\eqref{eq:lagrangian-second-curve-convergence}.

We also record the spatial estimates needed for the density factor.
The factorization in \eqref{eq:lagrangian-reference-enthalpy} implies
$F_0'\in L^\infty$ and
$\partial_y^2[p(F_0(y))]=O(y^{\alpha-1})$ near zero.
Differentiating \eqref{eq:fixed-enthalpy-lagrangian-ode} therefore bounds
$r_y,r_{yy},r_{yyy},r_{yi},r_{yyi}$, uniformly for $i=\mu,\kappa$;
only $F_0'$ and the displayed second derivative of $p\circ F_0$ occur.
Moreover, $|r_y|$ is bounded below.  Inverting $r(y)$ and combining with
the interior estimates shows that
$\widehat y(s)=y(R_{\mu,\kappa}s)$ is continuous in parameters in
$W^{2,\infty}(0,1)$ and differs from its $\kappa=0$ value by
$O_I(\kappa)$ in this norm.  Since $\widehat y(1)=0$,
\begin{equation}\label{eq:lagrangian-rescaled-enthalpy-factor}
 \widehat y(s)=(1-s)b_{\mu,\kappa}(s),\qquad
 b_{\mu,\kappa}(s)=\int_0^1
       -\widehat y'\bigl(s+t(1-s)\bigr)\ud t.
\end{equation}
The integral formula and its $s$ derivative show
\[
 \|b_{\mu,\kappa}-b_{\nu,\lambda}\|_{W^{1,\infty}}
 \leq C\|\widehat y_{\mu,\kappa}-\widehat y_{\nu,\lambda}\|_{W^{2,\infty}}.
\]
Transversality gives a uniform positive lower bound for $b$ near $s=1$;
positivity of $\widehat y$ and compactness give it on the remaining
interval.  Consequently
\[
 c_{\rho,\mu,\kappa}(s)=b_{\mu,\kappa}(s)^\alpha
 G_0\bigl((1-s)^\alpha b_{\mu,\kappa}(s)^\alpha\bigr)
\]
has all the properties in \eqref{eq:lagrangian-rescaled-density-ratio}.

For clarity, the tangent is differentiated at fixed physical radius.
In the surface coordinates, $y_i=-r_i/r_y$ is uniformly bounded, so
\begin{equation}\label{eq:lagrangian-weighted-tangent}
 v_{\mu,\kappa}=F_0'(y)y_\mu=O_I((R-r)^{\alpha-1}),\qquad
 \int_{R-\eps}^R w_1|v_{\mu,\kappa}|^2r^2\ud r
 \leq C_I\eps^\alpha.
\end{equation}
Here $w_1\asymp(R-r)^{1-\alpha}$, as also checked below.
Interior regularity and this estimate prove membership in the weighted
density space.  The zero extension has no surface delta term because
$\rho(R)=0$; the same estimates justify
$M_\mu'=4\pi\int_0^Rr^2v_{\mu,\kappa}\ud r$.
These are first-tangent assertions, not claims of twice differentiable
zero extensions in the weighted norm.

For $\kappa>0$, $D_\kappa>0$ implies $\rho'<0$.  The induced
entropy--density derivative and Schwarzschild coefficient are
\[
 f_\mu'(\rho(r))=\frac{\kappa\overline S'(m)4\pi r^2\rho}{\rho'}<0,
 \qquad \mathfrak G=-\frac{P_Sf_\mu'}{\rho}>0\quad(0<r<R).
\]
At the center, writing $P_{\rho,c}=P_\rho(\kappa\overline S(0),\mu)$ and
$P_{S,c}=P_S(\kappa\overline S(0),\mu)$, one obtains
\[
 \rho'=-\frac{4\pi\mu^2}{3P_{\rho,c}}r+O(r^2),\qquad
 \mathfrak G=
 \frac{3\kappa P_{S,c}P_{\rho,c}\overline S'(0)}{\mu^2}r
 +O_I(\kappa r^2).
\]
Thus $\mathfrak G\asymp\kappa r$ and
$\int_{|x|<\delta}\mathfrak G^{-1}\ud x<\infty$.
At vacuum, \eqref{eq:lagrangian-enthalpy-ode} gives
$-f_\mu'=O_I(\kappa\rho^{\gamma_0-1})$ and
$\mathfrak G=O_I(\kappa\rho^{2\gamma_0-2})$.
The effective weight is
$w_1=P_\rho/\rho-\mathfrak G\asymp\rho^{\gamma_0-2}$, and is positive
also directly by hydrostatic balance.  These estimates verify the endpoint
regularity of $f_\mu$, the weight used in
\eqref{eq:lagrangian-weighted-tangent}, and all conditions in
$\mathrm{(E_s)}$.
\end{proof}

\subsection{The full Hessian and the family tangent}

The mass prescription is inserted before taking variations.  For radial
densities define
\begin{equation}\label{eq:lagrangian-reduced-energy}
 \cE_\sigma[\rho]=\int\Phi(\sigma(m_\rho),\rho)\ud x
                 -\frac1{8\pi}\int|\nabla V_\rho|^2\ud x,
 \qquad J_\eta(r)=4\pi\int_0^r s^2\eta(s)\ud s .
\end{equation}
Variations are first taken in the smooth interior core; the resulting
bilinear forms are extended to the weighted density space.
The pullback \eqref{eq:mass-preserving-pullback} identifies these spaces with
\begin{equation}\label{eq:lagrangian-fixed-reference-space}
 \mathsf X=\bigl(L^2_{(1-|x|)^{1-\alpha}}(B_1)\bigr)^{\rm rad},
 \qquad
 \mathsf H=\left\{\eta\in\mathsf X:\int_{B_1}\eta\ud x=0\right\}.
\end{equation}

\begin{proposition}\label{prop:lagrangian-hessian}
For the family in \cref{lem:lagrangian-family-existence}, the full Hessian is
\begin{equation}\label{eq:lagrangian-hessian}
\begin{aligned}
 \cQ_\mu^{\rm Lag}[\eta]
 ={}&\int\Phi_{\rho\rho}\eta^2\ud x
       -4\pi\langle(-\Delta)^{-1}\eta,\eta\rangle\\
 &+2\int B_\mu J_\eta\eta\ud x+\int C_\mu^{\rm Lag} J_\eta^2\ud x,\\
 B_\mu={}&\Phi_{\rho S}\sigma'(m_\mu),\qquad
 C_\mu^{\rm Lag}=\Phi_{SS}|\sigma'(m_\mu)|^2+\Phi_S\sigma''(m_\mu).
\end{aligned}
\end{equation}
The thermodynamic derivatives here are evaluated at $(S_\mu,\rho_\mu)$.
On $H_\mu=\{\eta:\int\eta=0\}$ this is the dynamically accessible form.
On the full radial density space it satisfies
\begin{equation}\label{eq:lagrangian-tangent-identity}
 \cQ_\mu^{\rm Lag}(v_\mu,\eta)=-c_\mu'\ell_\mu(\eta),\qquad
 v_\mu=\partial_\mu\rho_\mu,\qquad c_\mu=\frac{M_\mu}{R_\mu}.
\end{equation}
On the fixed space \eqref{eq:lagrangian-fixed-reference-space} the forms are
norm-continuous and
\begin{equation}\label{eq:lagrangian-form-convergence}
 \|\widehat\cQ_{\mu,\kappa}^{\rm Lag}
           -\widehat L_{\mu,0}\|_{\mathcal B(\mathsf X,\mathsf X^*)}
 \leq C_I\kappa .
\end{equation}
They satisfy $\mathrm{(F)}$ for small $\kappa$.
\end{proposition}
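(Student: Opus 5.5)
We prove the four assertions of \cref{prop:lagrangian-hessian} in this order: the Hessian formula, its identification with the dynamically accessible form, the tangent identity, and the pulled-back estimates.

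\emph{Hessian formula.} Work first with smooth radial $\eta$ supported in an interior annulus, so that every coefficient is smooth. The map $\rho\mapsto m_\rho$ is linear with $\delta m=J_\eta$. Hence the second variation of $\int\Phi(\sigma(m_\rho),\rho)\ud x$ is $\int\bigl(\Phi_{\rho\rho}\eta^2+2\Phi_{\rho S}\sigma'J_\eta\eta+(\Phi_{SS}\sigma'^2+\Phi_S\sigma'')J_\eta^2\bigr)\ud x$. The gravitational term contributes $-4\pi\langle(-\Delta)^{-1}\eta,\eta\rangle$. This gives \eqref{eq:lagrangian-hessian}.

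\emph{Identification with the dynamically accessible form.} For $\eta\in H_\mu$, the accessible entropy perturbation is $s=\sigma'(m_\mu)J_\eta$, since entropy is advected with the mass coordinate. This agrees with \eqref{eq:radial-entropy-reconstruction}, because $f_\mu'\rho_\mu'=\sigma'\,4\pi r^2\rho_\mu$. Substitute this $s$ into \eqref{eq:energy-Casimir-Hessian}. The cross and quadratic entropy terms combine with the Casimir contribution $\int\rho_\mu g_0''s^2$. Integrating by parts in $r$ with $J_\eta'=4\pi r^2\eta$ and the Euler--Lagrange relation \eqref{eq:energy-Casimir-Euler-Lagrange}, the $\Phi_S\sigma''$ term should be exactly the difference between $\rho g_0''s^2$ and $\Phi_{SS}$-type terms. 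I would check this coefficient matching through \eqref{eq:Hessian-coefficient-identities}, expressing $g_0''$ along $S_\mu=\sigma(m_\mu)$. Boundary terms vanish because $J_\eta(R)=0$ and $J_\eta(0)=0$.

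\emph{Tangent identity.} The equilibrium is a critical point of $\cE_\sigma+c_\mu\int\rho$ among radial densities. The first variation yields $\Phi_\rho+V_\mu+\int_r^R\Phi_S\sigma'\,4\pi s^2\ud s+c_\mu=0$, obtained by exchanging order of integration in the $J_\eta$ term. Differentiate this identity in $\mu$ at fixed physical radius in the interior core. Testing against $\eta$ gives $\cQ^{\rm Lag}_\mu(v_\mu,\eta)=-c_\mu'\ell_\mu(\eta)$. The boundary value $c_\mu=M_\mu/R_\mu$ follows by evaluating at $r=R_\mu$, where $\rho=0$, $\Phi_\rho=0$ and $V=-M/R$. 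The moving boundary contributes nothing since $\rho_\mu(R_\mu)=0$. The extension from interior tests to the weighted space uses the tangent bound \eqref{eq:lagrangian-weighted-tangent}.

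\emph{Pulled-back estimates.} I expect norm-continuity and the $O(\kappa)$ bound \eqref{eq:lagrangian-form-convergence} to be the main obstacle. The singular coefficients $B_\mu$ and $C^{\rm Lag}_\mu$ involve $\Phi_S$ and $\Phi_{SS}$ near vacuum. Under the pullback $\mathcal U_\mu$, one has $J_\eta$ unchanged and $\Phi_{\rho\rho}$ scaled by $R^{-3}$ factors. We then write $\Phi=e^S\rho\int_0^\rho p/\tau^2$, so that $\Phi_S=\Phi$ and $\Phi_{SS}=\Phi$, both $O(\rho^{\gamma_0})$. The difference $\Phi_{\rho\rho}(S_{\mu,\kappa},\rho_{\mu,\kappa})-p'(\rho_{\mu,0})/\rho_{\mu,0}$ is controlled in the weight $(1-s)^{1-\alpha}$ by \eqref{eq:lagrangian-rescaled-density-ratio}: the ratio of the $W^{1,\infty}$ factors is $1+O(\kappa)$, and $e^{\kappa\overline S}=1+O(\kappa)$. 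The terms $2\int B J\eta$ and $\int CJ^2$ carry a factor $\kappa$ and are bounded by Hardy-type bounds as in \cref{lem:radial-Hardy}. These work on the full space, not only on $\mathsf H$, because $|J_\eta|^2\lesssim\|\eta\|_{\mathsf X}^2$ and the coefficients $\rho^{\gamma_0}$ and $\rho^{\gamma_0-1}$ are bounded. The Newton term difference is handled by continuity of the rescaled densities together with compactness (\cref{lem:potential-compact}).

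\emph{Property (F).} This then follows. Each form is $\widehat L_{\mu,0}$ plus an $O(\kappa)$ perturbation, and $\widehat L_{\mu,0}$ is coercive plus compact. The $C^2$ regularity comes from \cref{lem:lagrangian-family-existence}.
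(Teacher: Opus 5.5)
Your proposal is correct. For the Hessian formula, the tangent identity, the pulled-back coefficient bounds and $\mathrm{(F)}$, it follows the paper's proof. In the tangent identity you differentiate the Euler--Lagrange relation at fixed $r$, and the tail term vanishes because $\Phi_S(S,0)=0$.

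The one genuine divergence is how you identify $\cQ_\mu^{\rm Lag}$ with the accessible form on $H_\mu$.
\begin{itemize}
\item The paper never uses $g_0$ at this step. It splits $B_\mu=(P_S/\rho_\mu+\Phi_S/\rho_\mu)\sigma'$ and uses $(\Phi_S\sigma'/\rho_\mu)'=P_S\sigma'\rho_\mu'/\rho_\mu^2+4\pi r^2C_\mu^{\rm Lag}$. One integration by parts then turns $2\int B_\mu J\eta\ud x+\int C_\mu^{\rm Lag}J^2\ud x$ into $2\int(P_S\sigma'/\rho_\mu)J\eta\ud x-\int_0^R(P_S\sigma'\rho_\mu'/\rho_\mu^2)J^2\ud r$. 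This is exactly the expanded completed square $\int\mathfrak G(|\eta-T_\mu\eta|^2-\eta^2)\ud x$, and the argument works for every smooth mass-zero variation.
\item You compare with the uncompleted second variation of $\cH_0$. Since $g_0'=-\Phi_S/\rho_\mu$, this is the same integration by parts, but it brings in $g_0''$. For this family the coefficient $\Phi_{SS}+\rho_\mu g_0''=-P_S/(\rho_\mu f_\mu')$ blows up at the centre, where $-f_\mu'\asymp\kappa r$. In addition, $g_0''$ need not extend to the vacuum endpoint; the paper flags exactly this for Lagrangian entropy profiles. So your restriction to annulus-supported test functions, followed by extension by density, is essential rather than a convenience.
\end{itemize}

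What your route buys is a structural reason for the identity. On the manifold $S=\sigma(m_\rho)$ one has $\mathcal C_{g_0}=\int_0^{M_\rho}g_0(\sigma(m))\ud m$, which depends only on the total mass. Hence $\cE_\sigma$ and the restriction of $\cH_0$ differ by a function of mass, whose second variation vanishes on $H_\mu$. The curvature contribution $\delta\cH_0[(0,\sigma''J^2,0)]$ vanishes because $\Phi_S+\rho_\mu g_0'=0$. Phrased this way, no coefficient matching is needed at all.

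Two details need correcting.
\begin{itemize}
\item Your account of which terms match is wrong. The $\Phi_{SS}$ terms coincide identically. It is the Casimir cross term $2\int g_0'\sigma'J\eta\ud x$ that does the work: after integration by parts with $J'=4\pi r^2\eta$ it produces $-\int(g_0''\sigma'^2+g_0'\sigma'')\rho_\mu J^2\ud x$. This cancels $\rho_\mu g_0''s^2$ and, via $\Phi_S=-\rho_\mu g_0'$, the $\Phi_S\sigma''J^2$ term.
\item The pulled-back Newton term is $-4\pi R_{\mu,\kappa}^{-1}\langle(-\Delta)^{-1}h,h\rangle$ on $B_1$ and contains no density at all. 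Its $O(\kappa)$ contribution to \eqref{eq:lagrangian-form-convergence} comes from $R_{\mu,\kappa}^{-1}-R_{\mu,0}^{-1}=O_I(\kappa)$ together with boundedness on $\mathsf X$. Continuity of the rescaled densities plus compactness, as you propose, would give no rate.
\end{itemize}
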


\begin{proof}
Since $\delta m_\rho=J_\eta$, the first variation of
\eqref{eq:lagrangian-reduced-energy} is
\[
 \delta\cE_\sigma[\rho](\eta)
 =\int\left[\Phi_\rho+V_\rho+
       4\pi\int_r^R\Phi_S(s)\sigma'(m_\rho(s))s^2\ud s\right]\eta(r)\ud x .
\]
This follows by reversing the order of integration in
$\int\Phi_S\sigma'J_\eta\ud x$.  The derivative in $r$ of the bracket is
$P'/\rho+V_\rho'$, by \eqref{eq:pressure-gradient-identity}, so it is
constant along an equilibrium.  Evaluation at vacuum gives
\begin{equation}\label{eq:lagrangian-Euler-tail}
 \Phi_\rho+V_\mu+
 4\pi\int_r^{R_\mu}\Phi_S(s)\sigma'(m_\mu(s))s^2\ud s+c_\mu=0,
 \qquad c_\mu=\frac{M_\mu}{R_\mu}.
\end{equation}
A second variation, using
$\delta S=\sigma'J_\eta$ and $\delta^2 S=\sigma''J_\eta^2$,
gives \eqref{eq:lagrangian-hessian}.

To compare with the accessible form, use
$\Phi_{\rho S}-P_S/\rho=\Phi_S/\rho$ and
\[
 \left(\frac{\Phi_S\sigma'}{\rho}\right)'
 =\frac{P_S\sigma'\rho'}{\rho^2}+4\pi r^2 C_\mu^{\rm Lag}.
\]
For a smooth mass-zero variation, $J'=4\pi r^2\eta$ and
$J(0)=J(R)=0$.  Integration by parts therefore gives
\[
 2\int B_\mu J\eta\ud x+\int C_\mu^{\rm Lag} J^2\ud x
 =2\int\frac{P_S\sigma'}{\rho}J\eta\ud x
  -\int_0^R\frac{P_S\sigma'\rho'}{\rho^2}J^2\ud r .
\]
Using $f_\mu'=\sigma'4\pi r^2\rho/\rho'$ and
$\mathfrak G=-P_Sf_\mu'/\rho$, this is exactly the expansion of
$\int[\Psi''\eta^2+\mathfrak G|\eta-T_\mu\eta|^2]\ud x
-\int\Phi_{\rho\rho}\eta^2\ud x$.
Boundedness and the mass-zero density argument in
\cref{prop:radial-range} extend the equality to $H_\mu$.

For completeness, all full-space extensions and parameter limits follow
from
\[
 |J_\eta(r)|\leq C_I\|\eta\|_{X_{1,\mu,\kappa}},\qquad
 |B_\mu|\leq C_I\kappa\rho^{\gamma_0-1},\qquad
 |C_\mu^{\rm Lag}|\leq C_I\kappa\rho^{\gamma_0}.
\]
The first bound is weighted Cauchy--Schwarz, since
$\int w_1^{-1}\ud x$ is uniformly finite.  The other two follow from
\eqref{eq:Phi-definition}.  In particular,
\[
 \left|\int \rho^{\gamma_0-1}J_\eta\eta\ud x\right|
 \leq
 \left(\int\rho^{\gamma_0-2}\eta^2\ud x\right)^{1/2}
 \left(\int\rho^{\gamma_0}J_\eta^2\ud x\right)^{1/2}
 \leq C_I\|\eta\|^2 .
\]
Here is the complete pullback, including the moving-radius factors.
For $h=\mathcal U_{\mu,\kappa}\eta$, put
$J_h(s)=4\pi\int_0^s t^2h(t)\ud t$ and evaluate hatted coefficients at
$r=Rs$, where $R=R_{\mu,\kappa}$.  Then
\begin{equation}\label{eq:lagrangian-full-pullback}
 \begin{split}
 \widehat\cQ_{\mu,\kappa}^{\rm Lag}[h]
 ={}&R^{-3}\int_{B_1}\widehat{\Phi_{\rho\rho}}h^2\ud x
      -4\pi R^{-1}\langle(-\Delta)^{-1}h,h\rangle\\
 &+2\int_{B_1}\widehat B_\mu J_hh\ud x
      +R^3\int_{B_1}\widehat C_\mu^{\rm Lag}J_h^2\ud x .
 \end{split}
\end{equation}
Indeed, $J_\eta(Rs)=J_h(s)$, and the Newton kernel has homogeneity $-1$.
Write $d=1-|x|$ and $w_*=d^{1-\alpha}$.  The positive density factor and
\eqref{eq:lagrangian-factorized-pressure} give, uniformly on $I$,
\begin{equation}\label{eq:lagrangian-pullback-coefficient-bounds}
 \begin{gathered}
 \left\|\frac{R_{\mu,\kappa}^{-3}\widehat{\Phi_{\rho\rho}}_{\mu,\kappa}
       -R_{\mu,0}^{-3}\widehat{\Phi_{\rho\rho}}_{\mu,0}}{w_*}
       \right\|_\infty\leq C_I\kappa,\\
 |\widehat B_\mu|\leq C_I\kappa d,\qquad
 |\widehat C_\mu^{\rm Lag}|\leq C_I\kappa d^{\alpha+1},\qquad
 \|J_h\|_\infty\leq C_I\|h\|_{\mathsf X}.
 \end{gathered}
\end{equation}
For example,
$\Phi_{\rho\rho}=e^S\rho^{\gamma_0-2}
(\gamma_0a(\rho)+\rho a'(\rho))$; dividing its pullback by $w_*$ leaves
a bounded function of the positive factor $c_{\rho,\mu,\kappa}$,
$S$, and $R$, each varying by $O_I(\kappa)$.
Weighted Cauchy--Schwarz bounds the cross term by
\[
 C_I\kappa\|h\|_{\mathsf X}
       \left(\int_{B_1}d^{\alpha+1}J_h^2\ud x\right)^{1/2}
 \leq C_I\kappa\|h\|_{\mathsf X}^2.
\]
The last term of \eqref{eq:lagrangian-full-pullback} has the same bound.
The Newton form is bounded and compact on $\mathsf X$, and
$R_{\mu,\kappa}^{-1}-R_{\mu,0}^{-1}=O_I(\kappa)$.
Together these estimates prove \eqref{eq:lagrangian-form-convergence};
polarization gives the stated bilinear operator norm.  Taking coefficient
differences in the same bounds proves norm continuity in both parameters.
The local multiplication form is uniformly coercive on $\mathsf X$;
adding the two $O(\kappa)$ entropy terms preserves coercivity for small
$\kappa$.  Subtracting the compact Newton form proves $\mathrm{(F)}$.

Finally differentiate the bracket $\mathcal F_\mu$ in
\eqref{eq:lagrangian-Euler-tail} at fixed $r$:
\[
 \partial_\mu\mathcal F_\mu
 =\Phi_{\rho\rho}v_\mu+B_\mu J_{v_\mu}
  -4\pi(-\Delta)^{-1}v_\mu
  +4\pi\int_r^{R_\mu}(B_\mu v_\mu+C_\mu^{\rm Lag} J_{v_\mu})s^2\ud s .
\]
There is no moving-endpoint term since $\Phi_S(S,0)=0$.
The preceding estimates and the weighted tangent bound justify the
differentiation and reversal of integration.  Pairing with $\eta$ gives
$\cQ_\mu^{\rm Lag}(v_\mu,\eta)=-c_\mu'\ell_\mu(\eta)$.
Also $\ell_\mu(v_\mu)=M_\mu'$ because the density vanishes at the moving
surface.  Thus the normal identity holds for the full Hessian.
\end{proof}

\subsection{The turning-point principle}

\begin{theorem}[Turning-point principle near a prescribed isentropic branch]
\label{thm:lagrangian-small-entropy-tpp}
Under the hypotheses of \cref{lem:lagrangian-family-existence}, assume also
\eqref{eq:lagrangian-comparison-index-package} and
\eqref{eq:lagrangian-comparison-mass-maximum}.  For all sufficiently small fixed $\kappa>0$,
the family \eqref{eq:lagrangian-entropy-family} exists and has a unique
simple mass maximum $\mu_*^\kappa\to\mu_*$.  Its unstable dimension is
\begin{equation}\label{eq:lagrangian-unstable-count}
 n^u_{\rm rad}(\mu)=
 \begin{cases}
 0,&M_{\mu,\kappa}'\geq0,\\
 1,&M_{\mu,\kappa}'<0.
 \end{cases}
\end{equation}
The constrained form has a one-dimensional kernel at $\mu_*^\kappa$ and
no kernel elsewhere in $I$.  Thus the stars have no growing radial mode
through the maximum and exactly one after it; the first radial stability
transition in $I$ is precisely that maximum.
\end{theorem}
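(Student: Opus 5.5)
The proof combines three ingredients: the full-space normal identity \eqref{eq:lagrangian-tangent-identity}, the one-constraint inertia \cref{lem:one-constraint-inertia}, and the perturbative control \eqref{eq:lagrangian-form-convergence} together with the $C^2$ convergence of the mass curve.

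\emph{Step 1: the mass maximum.} I first locate the mass maximum for small $\kappa$. By \eqref{eq:lagrangian-second-curve-convergence}, $M_{\cdot,\kappa}'\to M_{\cdot,0}'$ in $C^1(I)$. Since $M_{\mu_*,0}''<0$, there is a neighborhood $U$ of $\mu_*$ on which $M_{\cdot,\kappa}''<0$ for small $\kappa$. On the compact set $I\setminus U$, $|M_{\cdot,0}'|$ is bounded below with the signs of \eqref{eq:lagrangian-comparison-mass-maximum}, and these signs persist. Hence $M_{\cdot,\kappa}'$ has a unique zero $\mu_*^\kappa\in U$, it is simple, it is a maximum, and $\mu_*^\kappa\to\mu_*$. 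Similarly, \eqref{eq:lagrangian-curve-convergence} gives $c_{\mu,\kappa}'\geq c_0/2>0$ on $I$ for small $\kappa$.

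\emph{Step 2: the unconstrained form.} Next I control the full-space form $\cQ_{\mu,\kappa}^{\rm Lag}$ on the fixed space $\mathsf X$. The comparison hypotheses $n^-(L_{\mu,0})=1$ and $\Ker L_{\mu,0}=\{0\}$ hold on the compact set $I$. By (F) at $\kappa=0$, $L_{\mu,0}$ is coercive plus compact, so it is an isomorphism. Norm continuity in $\mu$ and compactness of $I$ then give a uniform bound on the inverse. The estimate \eqref{eq:lagrangian-form-convergence} together with stability of isomorphisms under small perturbations yields, for small $\kappa$ uniformly on $I$, that $\cQ_{\mu,\kappa}^{\rm Lag}$ is nondegenerate with Morse index one. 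Morse index is constant along a norm-continuous path of nondegenerate Fredholm forms.

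\emph{Step 3: the constrained count.} I apply \cref{lem:one-constraint-inertia} with $Q=\cQ_{\mu,\kappa}^{\rm Lag}$, $v=v_\mu$, and $c=c_\mu'\neq0$; the normal identity is \eqref{eq:lagrangian-tangent-identity}. Then $q=-c_\mu'M_\mu'$ and $N=\{0\}$, so:
\begin{itemize}
\item when $M'>0$ we have $q<0$, giving index $0$ and trivial kernel;
\item when $M'<0$ we have $q>0$, giving index $1$ and trivial kernel;
\item at $\mu_*^\kappa$ we have $q=0$, giving index $0$ and kernel $\Span\{v_{\mu_*^\kappa}\}$, which is one-dimensional since $v\neq0$ ($v(0)=1$).
\end{itemize}
By \cref{prop:lagrangian-hessian}, the restriction to $H_\mu$ is the dynamically accessible form $\cQ_0$ of \cref{thm:radial-index}, with the Schwarzschild structure coming from \cref{lem:lagrangian-family-existence}. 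Therefore $n^u_{\rm rad}=n^-(\cQ_0|_{H_\mu})$, which gives \eqref{eq:lagrangian-unstable-count} with the value $0$ at the maximum itself.

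\emph{Step 4: the transition.} The kernel statement, together with the index change from $0$ to $1$ across $\mu_*^\kappa$ and nondegeneracy on either side, verifies \cref{def:TPP}. Since $n^u=0$ before $\mu_*^\kappa$, it is the first transition.

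\emph{Main obstacle.} The main obstacle is that all the perturbation arguments must be carried out on the fixed pulled-back space $\mathsf X$, while the constraint functional and the tangent live on the moving spaces. I would check that the mass-preserving pullback \eqref{eq:mass-preserving-pullback} maps $\ell_\mu$ to the fixed functional $\int_{B_1}h$. It is also an isomorphism, so the inertia indices and the normal identity transfer unchanged. The identity \eqref{eq:lagrangian-tangent-identity} is used only pointwise in $\mu$, so no differentiability of the pullback of $v_\mu$ is needed.
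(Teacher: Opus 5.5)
Your proposal is correct and follows the paper's proof essentially step for step. You use $C^1$/$C^2$ convergence of the mass curve to isolate the unique simple maximum and keep $c'_{\mu,\kappa}>c_0/2$. You combine a uniform inverse bound for the invertible reference forms $L_{\mu,0}$ on $\mathsf X$ with \eqref{eq:lagrangian-form-convergence} to get $n^-=1$ and trivial kernel for $\cQ^{\rm Lag}_{\mu,\kappa}$. You then apply \cref{lem:one-constraint-inertia} to the full-space normal identity \eqref{eq:lagrangian-tangent-identity} and pass to the accessible form via \cref{prop:lagrangian-hessian} and \cref{thm:radial-index}. Your remark that the mass-preserving pullback sends $\ell_\mu$ to $\int_{B_1}h$ spells out a transfer the paper leaves implicit.
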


\begin{proof}
Existence and $C^2$ convergence are given by
\cref{lem:lagrangian-family-existence}.  Choose a small interval
$J\Subset\operatorname{int}I$ containing $\mu_*$ and $\delta>0$ so that
$M_{\mu,0}''\leq-\delta$ on $J$.  On the compact complement of $J$,
$|M_{\mu,0}'|$ has a positive lower bound.  For small $\kappa$, the
$C^2$ convergence preserves strict decrease of $M_{\mu,\kappa}'$ on
$J$ and its opposite signs at the two ends, while $C^1$ convergence
preserves its sign outside $J$.  Hence there is exactly one simple
maximum $\mu_*^\kappa$, and $\mu_*^\kappa\to\mu_*$.
The same convergence gives $c_{\mu,\kappa}'>c_0/2$.
On the fixed space $\mathsf X$, the reference Fredholm operators are
invertible and depend continuously in norm on $\mu$.  Compactness of $I$
therefore gives a uniform bound for their inverses, and hence a uniform
gap from zero.  Equation \eqref{eq:lagrangian-form-convergence} implies
\[
 n^-(\cQ_{\mu,\kappa}^{\rm Lag})=1,\qquad
 \Ker\cQ_{\mu,\kappa}^{\rm Lag}=\{0\}.
\]
Apply \cref{lem:one-constraint-inertia} to
\eqref{eq:lagrangian-tangent-identity}.  Its normal quadratic value is
\[
 \cQ_{\mu,\kappa}^{\rm Lag}[v_{\mu,\kappa}]
 =-c_{\mu,\kappa}'M_{\mu,\kappa}'.
\]
The lemma gives constrained index zero when $M'\geq0$, index one when
$M'<0$, and
$\Ker(\cQ^{\rm Lag}|_{H})=\Span\{v_{\mu,\kappa}\}$ exactly when $M'=0$.
The full Hessian agrees on $H$ with the accessible form, so
\cref{thm:radial-index} proves \eqref{eq:lagrangian-unstable-count}.
At the maximum this is mode stability with a neutral direction, not
coercivity.  The kernel and index statements give the turning-point
principle in the sense of \cref{def:TPP}.
\end{proof}

\begin{remark}
The reference branch, including its simple maximum and the full-space
index assumptions, is prescribed in
\eqref{eq:lagrangian-comparison-index-package}--%
\eqref{eq:lagrangian-comparison-mass-maximum}; it is not constructed from
the vacuum exponent alone.  The comparison as $\kappa\downarrow0$ takes
place in the full density forms on $\mathsf X$.  It does not identify the
entropy phase spaces $X_2=L^2_{\mathfrak G}$ at $\kappa>0$ with a
nondegenerate entropy space at $\kappa=0$.
\end{remark}

\section{Schwarzschild instability}\label{sec:convective}

The Schwarzschild-unstable case is treated through a second-order equation
for the velocity perturbation.  We first construct the relevant operator from
a closed symmetric form.  This avoids any appeal to a formal differential
expression containing singular first-order terms.

\subsection{The admissible unstable sign and the velocity form}

Let \((\rho_0,S_0,0)\) be a compactly supported radial equilibrium with
the regularity specified in $\mathrm{(U)}$ below.  Write
\(S_0=f(\rho_0)\), with \(\rho_0'<0\) on \((0,R)\).  Hydrostatic balance
gives
\begin{equation}\label{eq:effective-pressure-forced-positive}
 \widetilde P'(\rho_0(r))\rho_0'(r)
 =-\rho_0(r)V_0'(r),
 \qquad
 V_0'(r)=\frac{4\pi}{r^2}\int_0^r s^2\rho_0(s)\ud s>0.
\end{equation}
Hence, by \eqref{eq:effective-pressure-forced-positive}, every
such decreasing compactly supported equilibrium necessarily has
\begin{equation}\label{eq:effective-pressure-positive-unstable}
 \widetilde P'(\rho_0)>0.
\end{equation}
This is the effective-pressure positivity
\eqref{eq:effective-pressure-positive} on the decreasing branch.
In particular, a branch with \(\widetilde P'<0\) is incompatible with the
equilibrium class considered here and is not part of the theorem.

Assume
\begin{equation}\label{eq:convective-sign-package}
\begin{gathered}
 P_\rho(S_0,\rho_0)>0,\\
 \mathfrak G(r)=-\frac{P_S(S_0,\rho_0)f'(\rho_0)}{\rho_0}<0
 \quad\hbox{on a nonempty open set }U\Subset B_R.
\end{gathered}
\end{equation}
Equation \eqref{eq:classical-discriminant-relation} gives the exact
relation to the classical discriminant.  In particular, $\mathfrak G$
has its sign, and negativity on $U$ is the convectively unstable sign.

Set
\begin{equation}\label{eq:D-a-definition}
 D_u=\Div(\rho_0u),\qquad a_u=\nabla\rho_0\cdot u,
 \qquad \mathfrak a(r)=\frac{P_\rho(S_0,\rho_0)}{\rho_0}>0.
\end{equation}
We define the symmetric velocity form by
\begin{equation}\label{eq:convective-velocity-form}
 \begin{split}
 \mathfrak q[u,v]
 ={}&\int_{B_R}\mathfrak aD_uD_v\ud x
      -\int_{B_R}\mathfrak G(D_ua_v+a_uD_v)\ud x\\
    &+\int_{B_R}\mathfrak Ga_ua_v\ud x
      -4\pi\langle(-\Delta)^{-1}D_u,D_v\rangle.
 \end{split}
\end{equation}
All terms are separately finite on the domain below.  Since
$\Psi''=\mathfrak a-\mathfrak G$ in the positive-density interior,
this agrees with the split expression
$\langle(\Psi''-4\pi(-\Delta)^{-1})D_u,D_v\rangle
+\int\mathfrak G(D_u-a_u)(D_v-a_v)\ud x$
whenever its two terms are separately integrable.  That split expression
is not needed to define the form on its full domain.  The local quadratic
part of \eqref{eq:convective-velocity-form} is
\begin{equation}\label{eq:convective-local-expanded}
 \int_{B_R}\left(
 \mathfrak aD_u^2-2\mathfrak GD_ua_u+\mathfrak Ga_u^2
 \right)\ud x.
\end{equation}

Let
\begin{equation}\label{eq:convective-form-domain}
\begin{gathered}
 Y=L^2_{\rho_0}(B_R;\R^3),
 \qquad
 \mathcal V=\left\{u\in Y:D_u\in L_{\mathfrak a}^2(B_R)\right\},\\
 \|u\|_{\mathcal V}^2
 =\|u\|_Y^2+\int_{B_R}\mathfrak a|D_u|^2\ud x.
\end{gathered}
\end{equation}

\begin{hypothesis}[Unstable-form package \(\mathrm{(U)}\)]
\label{hyp:unstable-form}
The pressure satisfies $\mathrm{(P)}$, and the radial equilibrium density
has the regularity in \eqref{eq:equilibrium-density-regularity}, with
$\rho_0'(0)=0$.  Its entropy satisfies
$S_0\in C^1([0,R])\cap C^2((0,R))$ and $S_0'(0)=0$.
The induced function $f$ belongs to
$C([0,\rho_0(0)])\cap C^2((0,\rho_0(0)))$; no bounded endpoint value of
$f'$ is assumed.  The two-sided physical-vacuum estimates
\eqref{eq:physical-vacuum}--\eqref{eq:physical-vacuum-derivative} hold.
Assume also \eqref{eq:convective-sign-package} and
\begin{equation}\label{eq:convective-coefficient-bound}
 \left(|\mathfrak G|+\frac{\mathfrak G^2}{\mathfrak a}\right)
 \frac{|\nabla\rho_0|^2}{\rho_0}\in L^\infty(B_R).
\end{equation}
The domain in \eqref{eq:convective-form-domain} is the maximal
distributional graph domain.  No separate smooth-core hypothesis is
imposed.
\end{hypothesis}

\begin{lemma}[Closed realization of the velocity form]
\label{lem:convective-closed-form}
Under \(\mathrm{(U)}\), the form \(\mathfrak q\) defined by
\eqref{eq:convective-velocity-form} on \(\mathcal V\) is densely defined,
closed, and lower semibounded on \(Y\).  It determines a unique
self-adjoint operator
\begin{equation}\label{eq:convective-self-adjoint-operator}
 \widetilde L:D(\widetilde L)\subset Y\longrightarrow Y,
 \qquad
 \mathfrak q[u,v]=(\widetilde Lu,v)_Y
 \quad(u\in D(\widetilde L),\ v\in\mathcal V).
\end{equation}
\end{lemma}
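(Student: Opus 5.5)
The plan is to compare $\mathfrak q$ with the reference closed form $\mathfrak q_0[u]=\|u\|_Y^2+\int_{B_R}\mathfrak a|D_u|^2\ud x$ on $\mathcal V$. Then we show that everything else in $\mathfrak q$ is a form-bounded perturbation with relative bound $<1$, up to a multiple of $\|u\|_Y^2$. The KLMN theorem then gives closedness and lower semiboundedness on the same domain $\mathcal V$, and the representation theorem for closed semibounded forms produces the unique self-adjoint $\widetilde L$ in \eqref{eq:convective-self-adjoint-operator}.

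First, density and closedness of $\mathfrak q_0$. Every $u\in C_c^\infty(B_R;\R^3)$ lies in $\mathcal V$. The cutoff-and-mollify argument from the proof of \cref{lem:B-closed}, now with weight $\rho_0$ instead of $1/\rho_0$, shows these are dense in $Y$, so $\mathcal V$ is dense. For closedness, let $u_n\to u$ in $Y$ with $D_{u_n}$ Cauchy in $L^2_{\mathfrak a}$, converging to some $g$. On each $K\Subset B_R$, weighted Cauchy--Schwarz gives $\rho_0u_n\to\rho_0u$ in $L^1(K)$ and $D_{u_n}\to g$ in $L^1(K)$, because $\rho_0$ and $\mathfrak a^{-1}$ are bounded on $K$. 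Passing to distributional limits yields $D_u=g$. So $u\in\mathcal V$ and $\|u_n-u\|_{\mathcal V}\to0$, exactly as in \cref{lem:B-closed}. Hence $\mathfrak q_0$ is closed.

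Next, the perturbation terms. By Cauchy--Schwarz,
\[
 \int|\mathfrak G\,a_u|^2/\mathfrak a\ud x
 \leq\Bigl\|\tfrac{\mathfrak G^2}{\mathfrak a}\tfrac{|\nabla\rho_0|^2}{\rho_0}\Bigr\|_\infty\|u\|_Y^2 ,
\]
since $|a_u|^2\leq|\nabla\rho_0|^2|u|^2$. Therefore
\[
 2\Bigl|\int\mathfrak G D_ua_u\ud x\Bigr|
 \leq\eps\int\mathfrak a D_u^2\ud x+C_\eps\|u\|_Y^2 ,
\]
and likewise $\int|\mathfrak G|a_u^2\ud x\leq C\|u\|_Y^2$, both by \eqref{eq:convective-coefficient-bound}. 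These bounds also show that each term in \eqref{eq:convective-velocity-form} is separately finite on $\mathcal V$.

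The Newton term is the main obstacle: we need
\[
 4\pi\langle(-\Delta)^{-1}D,D\rangle\leq\eps\int\mathfrak a D^2\ud x+C_\eps\|u\|_Y^2 .
\]
I would try the following. Since $\mathfrak a=P_\rho/\rho_0\asymp\rho_0^{\gamma_0-2}$ near vacuum (by (P) and \cref{lem:physical-vacuum}), we have $\mathfrak a^{-1}\in L^1(B_R)$. Hence $L^2_{\mathfrak a}\hookrightarrow L^1$, and the Newton form is bounded on $L^2_{\mathfrak a}$. This is the same input as \cref{lem:potential-compact}, applied with the weight $\mathfrak a$, which is comparable to $w_1$ at the boundary. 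Compactness of $\mathfrak a^{-1}(-\Delta)^{-1}$ on $L^2_{\mathfrak a}$ then lets us split the Newton term as a finite-rank part plus a part of norm $<\eps$.

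The finite-rank part involves pairings $\langle D_u,\phi_j\rangle$ for finitely many $\phi_j$. Approximating the $\phi_j$ by $C_c^\infty$ functions in the dual norm $L^2_{1/\mathfrak a}$, at a cost absorbed into the $\eps$ term, we integrate by parts:
\[
 \langle D_u,\phi\rangle=-\int\rho_0u\cdot\nabla\phi\ud x ,
\]
which is bounded by $C_\phi\|u\|_Y$. Alternatively, one could absorb the whole Newton term using only boundedness, if the constant happens to be small; I expect the compactness/finite-rank route to be the safe one.

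Combining these estimates gives $|\mathfrak q-\mathfrak q_0|\leq\tfrac12\mathfrak q_0+C\|u\|_Y^2$ on $\mathcal V$. The KLMN theorem then completes the proof.
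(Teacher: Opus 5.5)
Your proof is correct, but it handles the Newton term differently from the paper. The rest coincides with the paper's argument:
\begin{itemize}
\item closedness of $\mathfrak q_0$ by passing to distributional limits on compact subsets;
\item density of $C_c^\infty(B_R;\R^3)$ in $Y$;
\item the bounds $\int|\mathfrak G|a_u^2\ud x\le C\|u\|_Y^2$ and $\int\mathfrak G^2\mathfrak a^{-1}a_u^2\ud x\le C\|u\|_Y^2$ from \eqref{eq:convective-coefficient-bound}, which give the $\eps$-absorption of the cross term;
\item KLMN followed by the first representation theorem.
\end{itemize}

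For the Newton term the paper uses no compactness. It extends $w=\rho_0u$ and $D_u$ by zero. A boundary cutoff together with the vacuum decay $\rho_0\asymp d^\alpha$, $\alpha>1$, shows that $\Div(\mathbf 1_{B_R}\rho_0u)=\mathbf 1_{B_R}D_u$ in $\mathcal D'(\R^3)$, so there is no surface distribution. The whole-space multiplier bound $|\xi\cdot\hat w|^2/|\xi|^2\le|\hat w|^2$ then gives $4\pi\langle(-\Delta)^{-1}D_u,D_u\rangle\le4\pi\|\rho_0\|_{L^\infty}\|u\|_Y^2$. This is stronger than what you prove: the Newton part is a bounded form on $Y$ itself, with an explicit constant and relative bound zero.

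Your compact-plus-finite-rank splitting only pairs $D_u$ against compactly supported test functions after approximation in $L^2_{1/\mathfrak a}$. It therefore needs neither the zero-extension identity nor the boundary-layer estimate. The price is a non-explicit $C_\eps$ and an $\eps$-loss against $\int\mathfrak a|D_u|^2$, which KLMN tolerates.

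One justification in your sketch is wrong as stated and needs replacing. The embedding $L^2_{\mathfrak a}\hookrightarrow L^1$ does not make the Newton form bounded, because the Newton energy of a concentrated bump is not controlled by its $L^1$ norm. What you actually have is $\mathfrak a^{-1}\in L^\infty(B_R)$. This follows from interior positivity and continuity and from $\mathfrak a\asymp(R-r)^{1-\alpha}$ near vacuum, as the paper notes. Hence $L^2_{\mathfrak a}\hookrightarrow L^2(B_R)$, on which the Newton operator is Hilbert--Schmidt, since $|x-y|^{-1}\in L^2(B_R\times B_R)$. Boundedness of $\mathfrak a^{-1}$ also makes multiplication by $\mathfrak a^{-1}$ bounded from $L^2(B_R)$ to $L^2_{\mathfrak a}$. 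Together these give the boundedness and compactness of $\mathfrak a^{-1}(-\Delta)^{-1}$ on $L^2_{\mathfrak a}$ that your splitting requires.
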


\begin{proof}
The graph of the maximal weighted-divergence operator is closed.  Indeed,
suppose \(u_j\to u\) in \(Y\) and \(D_{u_j}\to g\) in
\(L_{\mathfrak a}^2\).  For every
\(\varphi\in C_c^\infty(B_R)\),
\[
 \langle g,\varphi\rangle
 =\lim_{j\to\infty}\langle D_{u_j},\varphi\rangle
 =-\lim_{j\to\infty}\int\rho_0u_j\cdot\nabla\varphi\ud x
 =-\int\rho_0u\cdot\nabla\varphi\ud x.
\]
Thus \(D_u=g\) distributionally, so \(u\in\mathcal V\).
Moreover, $C_c^\infty(B_R;\R^3)\subset\mathcal V$ is dense in $Y$.
Here $\mathfrak a^{-1}$ is bounded on $B_R$: this follows from positivity
and continuity in the interior, and from
$\mathfrak a\asymp(R-r)^{1-\alpha}$ near vacuum, with
$\alpha=(\gamma_0-1)^{-1}>1$.  Thus all distributional pairings used
above are justified.  Consequently the positive reference form
\[
 \mathfrak q_0[u]=\|u\|_Y^2+\int \mathfrak a|D_u|^2\ud x
\]
is densely defined and closed on \(\mathcal V\).

By \eqref{eq:convective-coefficient-bound},
\begin{align*}
 \int|\mathfrak G|a_u^2\ud x
 &\le C\|u\|_Y^2,\\
 \int\frac{\mathfrak G^2}{\mathfrak a}a_u^2\ud x
 &\le C\|u\|_Y^2.
\end{align*}
Consequently, the mixed term in
\eqref{eq:convective-local-expanded} satisfies, for every \(\eps>0\),
\[
 2\left|\int\mathfrak GD_ua_u\ud x\right|
 \le\eps\int \mathfrak a|D_u|^2\ud x+C_\eps\|u\|_Y^2.
\]
To estimate the Newton term on $\R^3$, first extend $w=\rho_0u$ and
$D_u$ by zero.  There is no surface distribution.  Indeed, for a smooth
test function $\varphi$ and a cutoff $\chi_\eps$ which is one away from
the layer $d=R-r<2\eps$ and vanishes near the boundary,
\[
 \left|\int_{d<2\eps}\rho_0u\cdot\nabla\chi_\eps\,\varphi\ud x\right|
 \leq C_\varphi\|u\|_{Y(\{d<2\eps\})}
                    \eps^{(\alpha-1)/2}\longrightarrow0.
\]
Also $D_u\in L^1(B_R)$ by weighted Cauchy--Schwarz and integrability
of $\mathfrak a^{-1}$.  Passing to the limit in the interior divergence
identity gives
\begin{equation}\label{eq:convective-zero-extension}
 \Div(\mathbf1_{B_R}\rho_0u)=\mathbf1_{B_R}D_u
 \quad\text{in }\mathcal D'(\R^3).
\end{equation}
The Fourier multiplier estimate on the whole space now gives
\[
 \left\langle(-\Delta)^{-1}\Div w,\Div w\right\rangle
 =\|\nabla(-\Delta)^{-1}\Div w\|_{L^2}^2
 \le\|w\|_{L^2}^2
 \le\|\rho_0\|_{L^\infty}\|u\|_Y^2.
\]
Thus all terms other than \(\int \mathfrak a|D_u|^2\) are form-bounded, and
the cross term has arbitrarily small relative bound.  The KLMN theorem,
applied after adding a sufficiently large multiple of the \(Y\)-norm,
proves that \(\mathfrak q\) on \(\mathcal V\) is closed and lower semibounded
\cite[Chap.~VI, Sec.~3]{Kato1995}.  The first representation theorem
\cite[Chap.~VI, Sec.~2]{Kato1995} then gives the unique self-adjoint operator
in \eqref{eq:convective-self-adjoint-operator}.  This construction specifies
the operator by its maximal-domain closed form.  It makes no additional
identification with the Friedrichs extension of a separately prescribed
minimal differential operator.
\end{proof}

\subsection{A negative direction}

\begin{lemma}\label{lem:convective-negative-direction}
Under $\mathrm{(U)}$, there exists
\(u\in\mathcal V\) such that \(\mathfrak q[u]<0\).  Consequently,
\begin{equation}\label{eq:negative-spectral-bottom}
 \eta_0:=\inf\spec(\widetilde L)<0.
\end{equation}
\end{lemma}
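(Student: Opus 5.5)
We construct a localized test field on which the buoyancy term dominates. The natural choice keeps the density perturbation zero while moving entropy across the unstable layer. Fix a shell or ball $U'\Subset U$ on which $\mathfrak G\leq-g_0<0$, $\rho_0\geq c>0$ and $|\nabla\rho_0|\geq c>0$. Interior points have $\rho_0'<0$, so after shrinking $U$ we may take $U'$ away from the center. Pick a smooth vector field $w$ compactly supported in $U'$ with $\Div w=0$ and set $u=w/\rho_0$. Then $D_u=\Div(\rho_0u)=0$ and $u\in\mathcal V$.

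With this choice the first, second and Newton terms of \eqref{eq:convective-velocity-form} vanish identically. What remains is
\[
 \mathfrak q[u]=\int_{U'}\mathfrak G\,a_u^2\ud x,\qquad a_u=\frac{\nabla\rho_0\cdot w}{\rho_0}.
\]
This is strictly negative as soon as $\nabla\rho_0\cdot w=\rho_0'(r)\,w_r$ is not identically zero on $U'$. The only issue is therefore to find a compactly supported divergence-free field with a nontrivial radial component. A purely radial field cannot be used, because it has divergence $r^{-2}(r^2w_r)'$. Nonradial fields suffice. Take a stream-function construction $w=\nabla\times(\psi\,e_\varphi)$ in spherical coordinates, or more simply $w=\nabla\times A$ with $A$ smooth and compactly supported in $U'$. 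Then $w_r=\frac{1}{r\sin\theta}\partial_\theta(\sin\theta\,\psi)$, and this is nonzero for a generic bump $\psi$ localized near an interior point with $\theta\neq0,\pi$. Hence $\mathfrak q[u]<0$.

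The spectral statement follows from the variational characterization. By \cref{lem:convective-closed-form}, $\widetilde L$ is the self-adjoint operator of the closed lower semibounded form $\mathfrak q$ with form domain $\mathcal V$. Therefore
\[
 \inf\spec(\widetilde L)=\inf_{0\neq v\in\mathcal V}\frac{\mathfrak q[v]}{\|v\|_Y^2}\leq\frac{\mathfrak q[u]}{\|u\|_Y^2}<0,
\]
which gives \eqref{eq:negative-spectral-bottom}.

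The main point needing care is the claim that $\mathfrak G<0$ on an open set $U$, which is the hypothesis in \eqref{eq:convective-sign-package}, can be arranged with $\nabla\rho_0\neq0$ and $\rho_0$ bounded below there. Monotonicity gives $\rho_0'<0$ on $(0,R)$ and $U\Subset B_R$, so only the center must be avoided. A possibly nonradial $U$ still contains a small ball away from the origin, and this is enough since the test field is itself nonradial.

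This contrasts with the radial sector, where the constraint $D_u=0$ forces $u=0$, as in the argument that $Y_1\cap P_{\mathrm{rad}}Y=\{0\}$. If one insists on a radial negative direction, one would instead choose $\rho_0u=q(r)e_r$ with $I$-type mass-zero structure and balance terms. The nonradial incompressible choice avoids this entirely.
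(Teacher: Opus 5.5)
Your proposal is correct and follows essentially the same argument as the paper. You take a divergence-free field $w$ compactly supported in $U$ away from the center and set $u=w/\rho_0$, so $D_u=0$ and only the buoyancy term $\int\mathfrak G a_u^2\ud x<0$ survives; the variational characterization of $\inf\spec(\widetilde L)$ then gives the spectral bound.

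The one difference is the choice of vector potential. The paper uses $\psi e$ with $\psi=\chi\,a\cdot(x-x_0)$, so $w(x_0)=a\times e$ has a visibly nonzero radial component. Your $\psi e_\varphi$ needs the short observation that $\partial_\theta(\sin\theta\,\psi)\not\equiv0$ for any nonzero $\psi$ supported off the polar axis, which holds for every such $\psi$, not just a generic one.
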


\begin{proof}
Choose \(x_0\in U\setminus\{0\}\).  Since \(\rho_0'(|x_0|)<0\), there are
constant vectors \(a,e\in\R^3\) such that
\[
 \nabla\rho_0(x_0)\cdot(a\times e)\ne0.
\]
Take \(\chi\in C_c^\infty(U)\) with \(\chi=1\) near \(x_0\), set
\[
 \psi(x)=\chi(x)\,a\cdot(x-x_0),\qquad
 w=\nabla\times(\psi e)=\nabla\psi\times e.
\]
Then \(w\in C_c^\infty(U;\R^3)\), \(\Div w=0\), and
\[
 \nabla\rho_0(x_0)\cdot w(x_0)
 =\nabla\rho_0(x_0)\cdot(a\times e)\ne0.
\]
Thus \(\nabla\rho_0\cdot w\not\equiv0\).  Since
\(U\Subset B_R\), the function \(u=w/\rho_0\) belongs to \(\mathcal V\).  It
satisfies
\[
 D_u=\Div w=0,\qquad
 a_u=\frac{\nabla\rho_0\cdot w}{\rho_0}.
\]
Therefore every term in \eqref{eq:convective-velocity-form} containing
\(D_u\) vanishes and
\[
 \mathfrak q[u]=\int_U\mathfrak G|a_u|^2\ud x<0.
\]
The variational characterization of the spectral bottom gives
\eqref{eq:negative-spectral-bottom}.
\end{proof}

\subsection{Spectral growth for the wave equation}

The velocity component of the linearized system satisfies
\begin{equation}\label{eq:convective-wave-equation}
 u_{tt}+\widetilde Lu=0.
\end{equation}
For \(C>-\eta_0\), equip \(\mathcal V=D((\widetilde L+C)^{1/2})\) with its
form norm.

\begin{theorem}[Schwarzschild instability]
\label{thm:Schwarzschild-instability}
Assume the unstable-form package \(\mathrm{(U)}\).  Let \(\eta_0\) be defined by
\eqref{eq:negative-spectral-bottom}.  Then:
\begin{enumerate}[label=(\roman*)]
\item Equation \eqref{eq:convective-wave-equation} defines a strongly
continuous group on \(\mathcal V\times Y\), and
\begin{equation}\label{eq:convective-upper-growth}
 \|(u(t),u_t(t))\|_{\mathcal V\times Y}
 \le C_0e^{\sqrt{-\eta_0}|t|}
 \|(u(0),u_t(0))\|_{\mathcal V\times Y}.
\end{equation}
The quadratic energy
\begin{equation}\label{eq:convective-energy}
 \|u_t(t)\|_Y^2+\mathfrak q[u(t)]
\end{equation}
is conserved.
\item For every \(\eps\in(0,-\eta_0)\), there are nonzero data
\(\phi_\eps\in\mathcal V\), with zero initial time derivative, for which
\begin{equation}\label{eq:convective-lower-growth}
 \|u_\eps(t)\|_Y
 \ge\frac12e^{\sqrt{-\eta_0-\eps}\,t}\|\phi_\eps\|_Y,
 \qquad t\ge0.
\end{equation}
On the spectral subspace used to choose \(\phi_\eps\), the \(Y\)- and
\(\mathcal V\)-norms are equivalent, so the right-hand side may equivalently
be written as \(c_\eps e^{\sqrt{-\eta_0-\eps}t}
\|\phi_\eps\|_{\mathcal V}\).
\end{enumerate}
\end{theorem}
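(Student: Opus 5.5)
The plan is to reduce everything to the spectral theorem for the self-adjoint operator $\widetilde L$ of \cref{lem:convective-closed-form}, which is bounded below by $\eta_0<0$ (\cref{lem:convective-negative-direction}). Fix $C>-\eta_0$ and set $\Lambda=(\widetilde L+C)^{1/2}$, a positive self-adjoint operator with $D(\Lambda)=\mathcal V$. By the KLMN construction the norm $\|\Lambda u\|_Y$ is equivalent to $\|u\|_{\mathcal V}$. The first step is to rewrite \eqref{eq:convective-wave-equation} as the first-order system
\[
 \partial_t\binom{u}{u_t}=\begin{pmatrix}0&\Id\\-\widetilde L&0\end{pmatrix}\binom{u}{u_t}.
\]
Its solution is given by functional calculus:
\[
 u(t)=\cos\bigl(t\widetilde L^{1/2}\bigr)u_0+\frac{\sin(t\widetilde L^{1/2})}{\widetilde L^{1/2}}u_1,
\]
where $\cos(t\sqrt\lambda)$ and $\sin(t\sqrt\lambda)/\sqrt\lambda$ are entire functions of $\lambda$. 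For $\lambda<0$ they equal $\cosh(t\sqrt{-\lambda})$ and $\sinh(t\sqrt{-\lambda})/\sqrt{-\lambda}$, so no branch issue arises.

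Alternatively, one can write the generator as a bounded perturbation of the skew-adjoint generator $\begin{pmatrix}0&\Id\\-(\widetilde L+C)&0\end{pmatrix}$ on $\mathcal V\times Y$, using the norm $\|\Lambda u\|^2+\|v\|^2$. The perturbation $\begin{pmatrix}0&0\\C&0\end{pmatrix}$ is bounded because $\mathcal V\hookrightarrow Y$. This gives a $C_0$ group directly. For the sharp rate, however, I will use the functional-calculus representation.

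For the bound \eqref{eq:convective-upper-growth}, I split the spectral measure of $\widetilde L$ into $[\eta_0,1]$ and $(1,\infty)$. On $(1,\infty)$ the multipliers $\cos$, $\sqrt\lambda\sin$ and $\sin/\sqrt\lambda$ are bounded by $1$ in the appropriate energy norms. This is the usual wave-equation energy estimate, with $\|\Lambda u\|$ comparable to $\|\widetilde L^{1/2}u\|$ on this part. On $[\eta_0,1]$ all norms are equivalent, and every multiplier is bounded by $C(1+|t|)e^{\sqrt{-\eta_0}|t|}$. To remove the polynomial factor, observe that for $\lambda\in[\eta_0,0)$ we have $|\sinh(t\sqrt{-\lambda})|/\sqrt{-\lambda}\le$ a constant times $e^{\sqrt{-\eta_0}|t|}$ only when $\lambda$ is away from $0$. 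Near $\lambda=0$, instead, $\sinh(t\sqrt{-\lambda})/\sqrt{-\lambda}\le |t|e^{|t|\sqrt{-\lambda}}$, and $|t|e^{|t|\sqrt{-\lambda}}\le C e^{\sqrt{-\eta_0}|t|}$ whenever $\sqrt{-\lambda}\le\sqrt{-\eta_0}/2$. Both regimes are therefore bounded by $C_0e^{\sqrt{-\eta_0}|t|}$.

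I expect this uniform, polynomial-free constant to be the main point needing care. Similar elementary bounds hold for $\sqrt{|\lambda|}\sinh$ and for $\cosh$. Conservation of \eqref{eq:convective-energy} follows by differentiating for data in $D(\widetilde L)\times\mathcal V$, using $(\widetilde Lu,u_t)_Y=\mathfrak q[u,u_t]$ from \eqref{eq:convective-self-adjoint-operator}, and then extending by density and continuity of the group.

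For (ii), let $E$ be the spectral projection of $\widetilde L$ and take $\phi_\eps\neq0$ in $\Ran E([\eta_0,\eta_0+\eps])$. This range is nonzero because $\eta_0\in\spec(\widetilde L)$ is the infimum, and $\eta_0+\eps<0$. With $u_t(0)=0$ we get
\[
 u_\eps(t)=\cosh\bigl(t\sqrt{-\widetilde L}\bigr)\phi_\eps,
\]
and on this spectral window $\cosh(t\sqrt{-\lambda})\ge\cosh(t\sqrt{-\eta_0-\eps})\ge\frac12e^{\sqrt{-\eta_0-\eps}\,t}$. By the spectral theorem, $\|u_\eps(t)\|_Y^2=\int\cosh^2(t\sqrt{-\lambda})\,d\|E_\lambda\phi_\eps\|^2$, which yields \eqref{eq:convective-lower-growth}. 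Finally, on the bounded spectral window the operator $\widetilde L+C$ is bounded above and below by positive constants. Hence the $\mathcal V$-norm and the $Y$-norm are equivalent there, which gives the alternative form of the estimate with $c_\eps\|\phi_\eps\|_{\mathcal V}$.
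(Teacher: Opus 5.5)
Your proposal is correct and follows the paper's proof: functional calculus for the cosine and sine families of $\widetilde L$, multiplier bounds on the bounded low spectrum and on the high spectrum controlled by the form norm, energy conservation on $D(\widetilde L)\times\mathcal V$ extended by density, and the $\cosh$ lower bound on the range of $E_{\widetilde L}([\eta_0,\eta_0+\eps])$. The only difference is cosmetic: the paper removes the polynomial factor in one line using the monotonicity of $\sinh(x)/x$, namely $\sinh(b|t|)/b\le\sinh(a|t|)/a$ for $0\le b\le a=\sqrt{-\eta_0}$, together with $1+|t|\le C_ae^{a|t|}$ on the nonnegative low spectrum, whereas you split the negative interval near $\lambda=0$.
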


\begin{proof}
The spectral theorem for a lower-semibounded self-adjoint operator gives the
cosine and sine families for \eqref{eq:convective-wave-equation}.  On the
negative spectral interval, put $a=\sqrt{-\eta_0}>0$ and
$b=\sqrt{-\lambda}\in[0,a]$.  Then
$ \cosh(b|t|)\leq e^{a|t|},\qquad
 \frac{\sinh(b|t|)}b\leq\frac{\sinh(a|t|)}a$,
where the value at $b=0$ is $|t|$.
These estimates also control the differentiated families in the form-energy
norm, since the negative spectral interval is bounded.  On bounded
nonnegative spectral intervals, use
$|\sin(t\sqrt\lambda)|/\sqrt\lambda\leq|t|$; on the remaining positive
spectrum the factors $\sqrt\lambda$ are controlled by the form norm.
Finally, $1+|t|\leq C_a e^{a|t|}$.  This proves
\eqref{eq:convective-upper-growth}.  Conservation of
\eqref{eq:convective-energy} follows first for data in
\(D(\widetilde L)\times\mathcal V\) and then by density.

For \(0<\eps<-\eta_0\), the spectral projection
$ E_{\widetilde L}([\eta_0,\eta_0+\eps])$
is nonzero by the definition of \(\eta_0\).  Choose a nonzero
\(\phi_\eps\) in its range and take
\(u_\eps(0)=\phi_\eps\), \(u_{\eps,t}(0)=0\).  The interval lies in the
negative axis, and spectral calculus gives
\begin{align*}
 \|u_\eps(t)\|_Y^2
 &=\int_{[\eta_0,\eta_0+\eps]}
 \cosh^2(t\sqrt{-\lambda})
 \ud\|E_{\widetilde L}(\lambda)\phi_\eps\|_Y^2\\
 &\ge\frac14e^{2\sqrt{-\eta_0-\eps}\,t}
 \|\phi_\eps\|_Y^2.
\end{align*}
Taking square roots proves \eqref{eq:convective-lower-growth}.  Because the
spectral interval is bounded, the form norm and \(Y\)-norm are equivalent on
its range.  If \(\eta_0\) is an eigenvalue, one may instead choose an
eigenfunction and obtain the exact rate
\(\cosh(t\sqrt{-\eta_0})\).
\end{proof}

\begin{proposition}[Reconstruction of the first-order variables]
\label{prop:convective-reconstruction}
Let \(u\in C(\R;\mathcal V)\cap C^1(\R;Y)\) be an energy solution of
\eqref{eq:convective-wave-equation} with \(u_t(0)=0\).  Define
\[
 \mathcal X_D=L_{\mathfrak a}^2(B_R)
\]
and, with
\(
 Z=\{x:|\nabla S_0(x)|=0\},
\)
\[
 \mathcal S_f=
 \left\{s:s=0\ \hbox{a.e. on }Z,\quad
 \int_{B_R\setminus Z}
 \frac{\rho_0|s|^2}
 {|\nabla S_0|^2}\ud x<\infty\right\}.
\]
Then
\begin{equation}\label{eq:convective-reconstructed-variables}
 \eta(t)=-\int_0^tD_{u(\tau)}\ud\tau\in C^1(\R;\mathcal X_D),
 \qquad
 s(t)=-\int_0^t u(\tau)\cdot\nabla S_0\ud\tau
       \in C^1(\R;\mathcal S_f).
\end{equation}
The triple \((\eta,s,u)\), with \(\eta(0)=s(0)=0\), is an energy solution of
the first-order system \eqref{eq:linearized-EP}.  More precisely, the
continuity and entropy equations hold in the displayed spaces, and the
integrated weak momentum equation is
\begin{equation}\label{eq:convective-integrated-momentum}
 (u_t(t),v)_Y+\int_0^t\mathfrak q[u(\tau),v]\ud\tau=0,
 \qquad v\in\mathcal V.
\end{equation}
For the growing solutions in
\cref{thm:Schwarzschild-instability}, this reconstruction retains the lower
bound \eqref{eq:convective-lower-growth} in every product norm containing the
\(Y\)-norm of \(u\).
\end{proposition}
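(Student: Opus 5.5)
Integration in time gives the reconstruction, with three checks: the continuity equation, the entropy equation, and the integrated momentum identity.

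\textbf{Regularity of the primitives.} Since $u\in C(\R;\mathcal V)$, the map $\tau\mapsto D_{u(\tau)}$ is continuous into $L^2_{\mathfrak a}=\mathcal X_D$, because $\|D_u\|_{L^2_{\mathfrak a}}\le\|u\|_{\mathcal V}$. The Bochner primitive $\eta(t)=-\int_0^tD_{u(\tau)}\ud\tau$ is therefore $C^1$ with $\eta_t=-D_u=-\Div(\rho_0u)$, which is the continuity equation in $\mathcal X_D$.

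For the entropy variable, the pointwise bound
\[
\int_{B_R\setminus Z}\frac{\rho_0|u\cdot\nabla S_0|^2}{|\nabla S_0|^2}\ud x\le\|u\|_Y^2
\]
holds, and $u\cdot\nabla S_0=0$ on $Z$. Hence $u\mapsto u\cdot\nabla S_0$ is a contraction $Y\to\mathcal S_f$, with $\mathcal S_f$ normed by the displayed weighted integral. Continuity of $u$ in $Y$ then gives $s\in C^1(\R;\mathcal S_f)$ and $s_t=-u\cdot\nabla S_0$. Both initial values vanish.

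\textbf{Momentum equation.} The energy solution satisfies, in the weak form associated with $\widetilde L$ via \cref{lem:convective-closed-form},
\[
\frac{\ud}{\ud t}(u_t,v)_Y=-\mathfrak q[u,v],\qquad v\in\mathcal V,
\]
as a distributional identity in $t$. To justify this, note that on $D(\widetilde L)\times\mathcal V$ data one has the strong identity $(u_{tt},v)_Y=-(\widetilde Lu,v)_Y=-\mathfrak q[u,v]$. Integrating gives \eqref{eq:convective-integrated-momentum} for smooth data, and density together with the continuity of the group from \cref{thm:Schwarzschild-instability}(i) extends it to energy solutions. Using $u_t(0)=0$ yields exactly \eqref{eq:convective-integrated-momentum}.

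It remains to show this is the weak form of the momentum equation in \eqref{eq:linearized-EP}. For $v\in C_c^\infty(B_R)$, write $\mathfrak q[u,v]$ and use linearity:
\[
\int_0^tD_{u(\tau)}\ud\tau=-\eta(t),\qquad \int_0^ta_{u(\tau)}\ud\tau=\int_0^t\nabla\rho_0\cdot u\ud\tau .
\]
Via $s=-\int u\cdot\nabla S_0=-f'(\rho_0)\int a_u$, the second primitive equals $-s/f'(\rho_0)$ wherever $f'\neq0$. Thus $\int_0^t\mathfrak q[u,v]\ud\tau$ is the pairing of $\rho_0v$ with the weak form of
\[
\frac1{\rho_0}\nabla(P_\rho\eta+P_Ss)-\frac{\eta}{\rho_0^2}\nabla P+\nabla V_\eta .
\]
This is verified by the same algebra as the derivation of \eqref{eq:linearized-momentum-separated}, with $\zeta$ replaced by $\eta-s/f'$ and the local terms recombined using $\mathfrak a-\mathfrak G=\Psi''$ and \eqref{eq:classical-discriminant-relation}. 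The Newton term is bounded via the zero extension \eqref{eq:convective-zero-extension}.

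\textbf{Main obstacle.} The delicate point is doing this identification on compact subsets of the interior, where all coefficients are bounded and $f'$ may vanish only on $Z$. On $Z$ the $\mathfrak G$-terms must be read through $P_S\nabla S_0$ rather than through $1/f'$. The plan is to write the entropy contributions as $P_S s$ directly, using $\mathfrak G a_u=-P_S\,u\cdot\nabla S_0/\rho_0$. This avoids dividing by $f'$ entirely, so the identity holds for all test fields, and the general statement follows from density of $C_c^\infty$ in $\mathcal V$.

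\textbf{Lower bound.} The lower-bound clause is immediate: the reconstruction leaves $u$ unchanged, so any product norm dominating $\|u\|_Y$ inherits \eqref{eq:convective-lower-growth}.
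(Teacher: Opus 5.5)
Your argument follows the paper's proof almost step for step:
\begin{itemize}
\item the Bochner primitives via $\|D_v\|_{\mathcal X_D}\le\|v\|_{\mathcal V}$ and $\|v\cdot\nabla S_0\|_{\mathcal S_f}\le\|v\|_Y$;
\item the integrated identity \eqref{eq:convective-integrated-momentum} from the weak form of the wave equation and $u_t(0)=0$ (your density-in-data argument is an acceptable substitute for the paper's observation that the $\mathcal V^*$-valued residual has zero time derivative);
\item the identification through $\mathfrak G a_u=-P_S\,u\cdot\nabla S_0/\rho_0$, which is exactly how the paper avoids dividing by $f'$.
\end{itemize}
The step that does not go through is the last one. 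You verify the expanded momentum identity only for $v\in C_c^\infty(B_R)$ and then appeal to density of $C_c^\infty$ in $\mathcal V$. Nothing in the paper provides this. \Cref{lem:convective-closed-form} shows only that $C_c^\infty$ is dense in $Y$, and $\mathrm{(U)}$ explicitly imposes no smooth-core hypothesis on the maximal domain $\mathcal V$. Nor is the density automatic. With $d=R-r$ and a boundary cutoff $\chi_\eps$, the error $\int\mathfrak a|\nabla\chi_\eps\cdot\rho_0u|^2\ud x$ is a priori bounded only by $C\eps^{-1}\|u\|_{Y(\{d<2\eps\})}^2$. Closing it would need extra control of the normal component at the vacuum. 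Moreover, a density extension requires both sides of the identity to be continuous in $v\in\mathcal V$. You never check that the expanded terms, with $\eta,s$ only in $\mathcal X_D\times\mathcal S_f$, are even defined for general $v\in\mathcal V$.

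The missing ingredient is exactly those continuity bounds; once you have them, density is unnecessary. On $B_R\setminus Z$ write $|s|=|\nabla S_0|\rho_0^{-1/2}S_*$ with $\|S_*\|_{L^2}=\|s\|_{\mathcal S_f}$, and use $\frac{P_S}{\rho_0}|\nabla S_0|=|\mathfrak G|\,|\nabla\rho_0|$. The two bounds in \eqref{eq:convective-coefficient-bound} then give
\begin{itemize}
\item $|\int\frac{P_S}{\rho_0}sD_v\ud x|\le C\|s\|_{\mathcal S_f}\|D_v\|_{\mathcal X_D}$;
\item $|\int\frac{P_S}{\rho_0}sa_v\ud x|\le C\|s\|_{\mathcal S_f}\|v\|_Y$;
\item $|\int\mathfrak G\eta a_v\ud x|\le C\|\eta\|_{\mathcal X_D}\|v\|_Y$.
\end{itemize}
The Newton pairing is bounded via $\mathcal X_D\hookrightarrow L^{6/5}(B_R)$, or via your zero-extension estimate applied to $\int_0^tu\ud\tau\in\mathcal V$. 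Consequently, for each fixed $v\in\mathcal V$, the map $u\mapsto\mathfrak q[u,v]$ is a sum of continuous linear functionals of $(D_u,u\cdot\nabla S_0)\in\mathcal X_D\times\mathcal S_f$. These commute with the Bochner integrals. Substituting $\eta(t),s(t)$ then yields the expanded weak momentum equation for every $v\in\mathcal V$ directly, with no approximation argument. The interior distributional form is the special case $v\in C_c^\infty(B_R)$, which is the computation you already did.
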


\begin{proof}
The definition of \(\mathcal V\) gives
$ \|D_v\|_{\mathcal X_D}\leq\|v\|_{\mathcal V}$.
Moreover,
\[
 \|v\cdot\nabla S_0\|_{\mathcal S_f}^2
 =\int_{B_R\setminus Z}
 \rho_0\frac{|\nabla S_0\cdot v|^2}{|\nabla S_0|^2}\ud x
 \leq\|v\|_Y^2.
\]
The Bochner integrals in
\eqref{eq:convective-reconstructed-variables} are therefore well defined and
have the asserted \(C^1\) regularity.  Their derivatives are precisely the
continuity and entropy equations.

The form equation for the wave solution holds in \(\mathcal V^*\):
\[
 \langle u_{tt},v\rangle_{\mathcal V^*,\mathcal V}
 +\mathfrak q[u,v]=0,\qquad v\in\mathcal V.
\]
Consequently the \(\mathcal V^*\)-valued momentum residual
\[
 \langle\mathcal R(t),v\rangle
 =(u_t(t),v)_Y+\int_0^t\mathfrak q[u(\tau),v]\ud\tau
\]
has zero distributional derivative.  It vanishes at \(t=0\), because
\(u_t(0)=0\), and hence \(\mathcal R(t)=0\), proving
\eqref{eq:convective-integrated-momentum}.  To identify this residual with the original momentum equation, set
$V_\eta=-4\pi(-\Delta)^{-1}\eta$.  The expanded weak equation is
\begin{equation}\label{eq:convective-expanded-momentum}
 \begin{split}
 (u_t,v)_Y
 &-\int_{B_R}\mathfrak a\eta D_v\ud x
  -\langle V_\eta,D_v\rangle
  +\int_{B_R}\mathfrak G\eta a_v\ud x\\
 &-\int_{B_R}\frac{P_S}{\rho_0}s(D_v-a_v)\ud x=0,
 \qquad v\in\mathcal V.
 \end{split}
\end{equation}
Every term is continuous on
$\mathcal X_D\times\mathcal S_f\times\mathcal V$.
For the entropy terms, write
$|s|=|\nabla S_0|\rho_0^{-1/2}S_*$ on $B_R\setminus Z$ with
$\|S_*\|_2=\|s\|_{\mathcal S_f}$, and use
$ \frac{P_S}{\rho_0}|\nabla S_0|
   =|\mathfrak G|\,|\nabla\rho_0|.$
The two bounds in \eqref{eq:convective-coefficient-bound} then give
\begin{align*}
 \left|\int\frac{P_S}{\rho_0}sD_v\ud x\right|
 &\leq C\|s\|_{\mathcal S_f}\|D_v\|_{\mathcal X_D},\\
 \left|\int\frac{P_S}{\rho_0}sa_v\ud x\right|
 &\leq C\|s\|_{\mathcal S_f}\|v\|_Y,\qquad
 \left|\int\mathfrak G\eta a_v\ud x\right|
 \leq C\|\eta\|_{\mathcal X_D}\|v\|_Y.
\end{align*}
The Newton pairing is bounded because $\mathcal X_D$ embeds into
$L^{6/5}(B_R)$, exactly as in the potential estimate of
\cref{lem:potential-compact}.
For smooth interior variables, testing \eqref{eq:linearized-EP} gives
\eqref{eq:convective-expanded-momentum}: use
$\Div v=(D_v-a_v)/\rho_0$ and
$\nabla V_0=-(\mathfrak a-\mathfrak G)\nabla\rho_0$.
Thus its continuous form expression is well defined on the stated spaces.
Substituting \eqref{eq:convective-reconstructed-variables} and
$(P_S/\rho_0)f'=-\mathfrak G$ into
\eqref{eq:convective-expanded-momentum} gives exactly
\eqref{eq:convective-integrated-momentum}.  This proves the momentum
equation for the reconstructed variables, both distributionally in the
interior and with the form-domain interpretation for all $v\in\mathcal V$.
No division by $f'$ and no approximation within a spectral subspace is
required.  Finally, the reconstructed product norm is at least
$\|u(t)\|_Y$, so \eqref{eq:convective-lower-growth} remains valid.
\end{proof}

\section*{Acknowledgments}
Zhiwu Lin was supported in part by the National Natural Science Foundation of
China (No.~12494544).  Yucong Wang was supported in part by the National
Natural Science Foundation of China (No.~12501290), the Furong Young Talents
Program for Science and Technology Innovation of Hunan Province
(No.~2026RC3168), the Natural Science Foundation of Hunan Province
(No.~2025JJ60068), the Scientific Research Fund of the Hunan Provincial
Education Department (No.~25B0150), the 111 Project (No.~D23017), and the
Program for Science and Technology Innovative Research Teams in Higher
Educational Institutions of Hunan Province, China. Hao Zhu was supported in part by the National Key R\&D Program of China (No. 2024YFA1013303).

OpenAI's ChatGPT was used during revision to assist with language editing,
\LaTeX{} reorganization, literature checks, and mathematical consistency
checks.  The authors assume responsibility for all content.

\section*{Data availability}
No datasets were generated or analyzed in this study.
\section*{Conflict-of-Interest Statement:}
The authors declare no conflict of interest.

\end{document}